\DeclareMathAlphabet{\mathpzc}{OT1}{pzc}{m}{it}
\newcommand{\Rey}{\mathrm{Re}}
\newcommand{\Pra}{\mathrm{Pr}}
\newcommand{\Gra}{\mathrm{Gr}}
\newcommand{\vv}{\mathbf{v}}
\newcommand{\ww}{\mathbf{w}}
\newcommand{\boldg}{\mathbf{g}}
\newcommand{\vvarphi}{\bm{\varphi}}
\newcommand{\diver}{\mathrm{div}\,}
\newcommand{\n}{\mathbf{n}}
\title{A function space approach to the shape
optimization of the Boussinesq system\thanks{The authors acknowledge support from the American Institute of Mathematics (AIM) through the SQuaRE project {\em Optimal mixing and control of heat conductive flows}. Partial support for C.N.R. and L.M. was provided by NSF grants DMS-2012391 and DMS-2309557, respectively. W.H. received partial support from NSF grant DMS-2205117 and AFOSR grant FA9550-23-1-0675. A.C. was partially supported by the UBA grant UBACYT 56BA and the ANPCyT grant PICT 2019-00985.}}
\author{Andrea Ceretani \thanks{Departamento de Matemática, Facultad de Ciencias Exactas y Naturales, Universidad de Buenos Aires (UBA), and Instituto de Investigaciones Matemáticas Luis A. Santaló (IMAS), UBA-CONICET, Buenos Aires, Argentina, \texttt{aceretani@dm.uba.ar}}
\and
Weiwei Hu\footnotemark[3] \and
Lin Mu \thanks{Department of Mathematics, University of Georgia, Athens, GA 30602, USA,
\texttt{weiwei.hu@uga.edu}, \texttt{lin.mu@uga.edu}}
\and Carlos N. Rautenberg \thanks{Department of Mathematical Sciences and the Center for Mathematics and Artificial Intelligence (CMAI), George Mason University, Fairfax, VA, 22030, USA, \texttt{crautenb@gmu.edu}}
}
\begin{document}  
\maketitle
\begin{abstract}
We investigate a shape optimization problem for a heat-conducting fluid governed by a Boussinesq system. The main goal is to determine an optimal domain shape that yields a temperature distribution as uniform as possible. Initially, we analyze the state problem, prove its well-posedness and establish a local boundary regularity result for the weak solution. We then demonstrate the existence of an optimal shape and derive a first-order optimality condition. This requires the derivation and analysis of the adjoint system associated with the Boussinesq model, as well as a rigorous treatment of the directional derivatives of the objective functional under appropriate domain perturbations. Finally, we present numerical experiments that illustrate and support the theoretical findings. 
\end{abstract}

\begin{keywords}
Shape optimization, Navier-Stokes equations, Boussinesq equations, first-order optimality condition.
\end{keywords}

\begin{AMS}
65K10, %Optimization and variational techniques
49J20, %Optimal control problems involving partial differential equations
76D05, %Navier-Stokes equations
35D30, %weak solutions
74S05. %FEM
\end{AMS}

\section{Introduction}
In this work we consider the problem of identifying the optimal shape of an incompressible thermal fluid container in order to force temperature to be homogeneous (and identical) in all points. In particular, we consider a two dimensional domain containing a fluid described by the incompressible Boussinesq equations that model velocity, pressure, and temperature of the fluid. The former two variables are modeled by means of a Navier-Stokes system, and the latter one by a convection-diffusion equation. The coupling of the two systems of equations is done via fluid velocity entering in the convective part of the heat equation, and the buoyancy term in the Navier-Stokes system determined by a linear function of the temperature. We endow the differential equations with nonhomogeneous Dirichlet boundary conditions for the temperature, and Dirichlet homogeneous for the fluid velocity.

The goal of modifying the shape of the container is to achieve the highest possible stage of mixing by means of a passive strategy. The scenario described here is common to fluid problems where a quantity of interest (temperature, or density of substance) is diffused and convected/advected by the velocity of the fluid. The problem under investigation is widespread in practical applications, spanning areas such as food industry, energy efficiency in buildings and indoor gas concentration detection. 

In the framework presented in the paper, for a container (domain) $\Omega_\gamma$ we allow only deformations on the bottom boundary $\Gamma_\gamma$ while the rest of the walls remain in place. The structural constraints in this setting to keep the problem realistic are several: (i) Changes of shape of the bottom $\Gamma_\gamma$ should not significantly change the volume $|\Omega_\gamma|$ of $\Omega_\gamma$ (ii) The deformation of bottom $\Gamma_\gamma$ of the container should be described by a  sufficiently smooth function $\gamma$ (iii) The bottom of the container should be a positive distance away of the top container. A few words are in order concerning these constraints; without (i) the optimal shape problem for mixing is rendered ill-posed, in fact, reducing the volume of the container (in general) improves mixing, so infimizing sequences drive volume to zero.  Since we aim to obtain a shape that can be machined or 3D-printed, if we remove (ii) a highly oscillatory bottom would always improve mixing. Finally, the restriction in (iii) preserves the integrity of the interior of the container as a connected set. 

\begin{figure}[ht!]
\centering
\includegraphics[scale=.3]{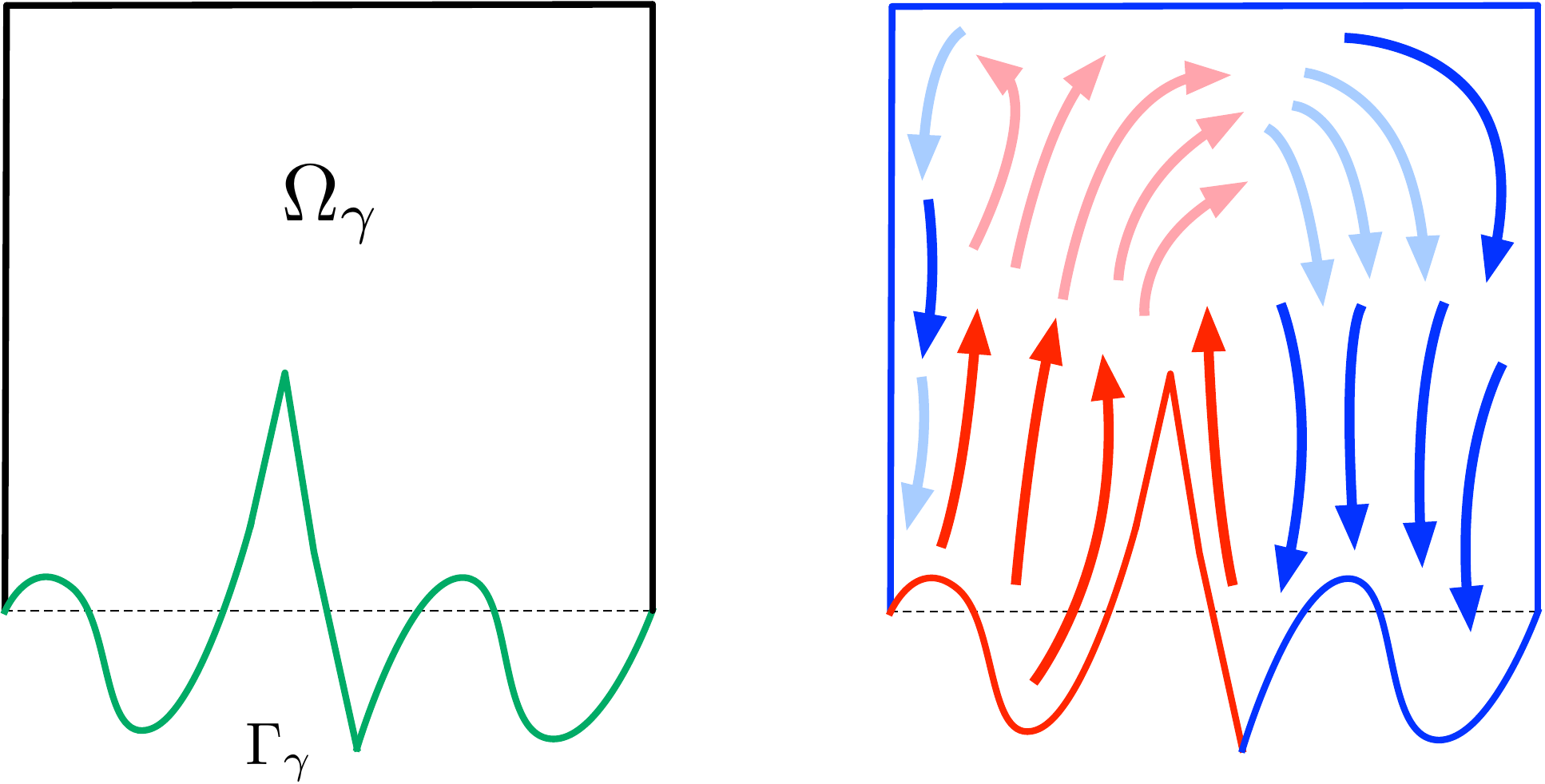}
\label{Fig:Domain}
\caption{Possible admissible domain (left). Fluid temperature and velocity  generated by the buoyancy effect (right).}
\end{figure}

Shape optimization problems with  constraints associated to fluid equations are of significant interest and possess  high level complexities. Involving foundational results of shape optimization, we refer the  {reader} to the monographs \cite{delfour2011shapes} and \cite{sokolowski1992introduction}. For Navier-Stokes systems and Stokes equations, to \cite{MR0609732} and \cite{Girault:1986fk}. Concerning Navier-Stokes in the framework of shape optimization, important works are found in the book \cite{plotnikov2012compressible} by Plotnikov and Sokolowski  and the paper \cite{boisgerault2018shape} by  Zol\'{e}sio and Boisg\'{e}rault. Using low regularity of the boundary and for the Stokes system, the shape optimization problem was considered in \cite{CeHuRa2023}. In \cite{halanay2009shape}, the authors consider the Navier-Stokes system with homogeneous Dirichlet boundary conditions, an existence result and an algorithm is provided. For applied problems involving fluid equations we refer the reader to \cite{mohammadi2004shape}, the monograph \cite{mohammadi2009applied} and references therein. Concerning shape/topology optimization, and modelling of the Boussinesq system, the literature is rather scarce. Notable exceptions are the recent works of \cite{yan2022shape} and \cite{ViCo2022}. In the former the authors consider an obstacle within the domain, and in the latter topological changes of the domain are allowed; see also \cite{CeRa2019,ArCeRa2020}.

Optimization problems constrained by the Navier-Stokes equations are often challenging due to the nonlinear nature of these equations. Factors such as the typically low regularity of the spatial domain, potential coupling with transport equations, and the presence of mixed boundary conditions further complicate their mathematical analysis. Addressing these issues carefully is crucial to ensuring good regularity properties for the fluid-related variables (e.g., velocity, pressure, temperature), which are essential for solving the optimization problem. Over the past decades, significant contributions have been made to the study of problems governed by the Stokes, Navier-Stokes, or Boussinesq equations. For example, see \cite{HuWu2018,HuWu2019,HiRaMoKa1017,DLReTr2007,KuMa2000,BuHeHu2016,BuHu2013} and the references therein.

The structure of the article is as follows. In Section \ref{Sec:Boussinesq}, we introduce the fluid dynamics problem of interest within a fixed container and present the corresponding mathematical model, governed by the Boussinesq equations. For this model, we analyze the existence, uniqueness, and boundary regularity properties of weak solutions. In Section \ref{Sec:Optimization}, we formulate a shape optimization problem aimed at designing an optimal domain that yields a temperature distribution as uniform as possible, and we prove the existence of an optimal shape. We then turn our attention to the derivation of first-order optimality conditions. To this end, we formally derive the adjoint system associated with the Boussinesq system in Section \ref{Sect:Adjoint}, establish its well-posedness, and examine the boundary regularity of its weak solution. In Section \ref{Sec:OptmalityCondition}, we consider a class of domain perturbations that allows us to prove the existence of directional derivatives of the objective functional and to derive a first-order optimality condition. Section \ref{Sec:Numerics} presents a series of numerical experiments that validate and illustrate the theoretical results. Finally, in Section \ref{Sec:Conclusion}, we summarize the main findings and suggest possible directions for future research.

\section{The Boussinesq system}\label{Sec:Boussinesq} This section is devoted to present the mathematical model of the fluid behavior, and its properties. Under usual regularity and smallness assumptions for data, we prove the existence and uniqueness of a weak solution in Theorem \ref{Thm:weak-EU}. In addition, we show that extra regularity for data improves the boundary regularity of the weak solution, see Theorem \ref{Thm:BdryReg}. 

Let $\Omega\subset\mathbb{R}^2$ be an open bounded set with a Lipschitz boundary $\partial\Omega$. Also, let $\Gamma_\gamma\subset\partial\Omega$ be an open and connected boundary portion with non-zero Lebesgue 1-dimensional measure. We consider a heat-conductive incompressible stationary flow described by the velocity of the fluid $\vv$, the pressure $p$, and the temperature $T$. We assume that fluid velocity is zero at $\partial\Omega$, the temperature at $\Gamma_\gamma$ is prescribed by $T_d$ and it is zero on the rest of the walls of the domain. Flow is generated by buoyancy (high temperature fluid raises), and temperature is convected by flow velocity and diffuses in the fluid.
Mathematically, we model the behavior and temperature of the fluid by the Boussinesq system 
\begin{align}
\label{v-edp}\vv\cdot\nabla \vv -\frac{1}{\Rey}\Delta \vv+\nabla p-\frac{\Gra}{\Rey^2}T\boldsymbol{e}=\,&\boldsymbol{g}_1&\text{in}&\quad\Omega,&\\
\label{v-DivFree}\nabla\cdot\vv=\,&0&\text{in}&\quad\Omega,&\\
\label{v-NoSlip}\vv=\,&0&\quad\text{on}&\quad\partial\Omega,
\end{align}
and 
\begin{align}
\label{T-edp}\vv\cdot\nabla T -\frac{1}{\Rey\Pra}\Delta T=\,&g_2&\text{in}&\quad\Omega,&\\
\label{T-Dirichlet}T=\,&T_d&\text{on}&\quad\Gamma_{\gamma},&\\
\label{T-Walls}T=\,&0&\text{on}&\quad\partial\Omega\setminus\Gamma_\gamma,
\end{align}
where ${\bf e}=(0,1)$ is a unit vector in the direction of buoyancy, and ${\bf g}_1$, $g_2$ are associated to possible heat sources and fluid perturbations within the domain, respectively. Moreover, $\Rey$, $\Pra$, $\Gra$ stand for Reynolds, Prandtl, and Grashof numbers. It should be noted that the systems of differential equations \eqref{v-edp}-\eqref{v-NoSlip} and \eqref{T-edp}-\eqref{T-Walls} are fully coupled.

In order to establish the weak form of \eqref{v-edp}-\eqref{T-Walls}, we introduce the spaces
\begin{align*}
V(\Omega):=\,&\{\boldsymbol\vvarphi\in H^{1}(\Omega;\mathbb{R}^2):\diver{\boldsymbol\vvarphi}=0\text{ a.e. in }\Omega, \boldsymbol\vvarphi=0\text{ on }\partial\Omega\text{ in the trace sense}\},\\
H_0^1(\Omega):=\,&\{\varphi\in H^{1}(\Omega):\varphi=0\text{ on }\partial\Omega\text{ in the trace sense}\},
\end{align*}
where $H^{1}(\Omega;\mathbb{R}^d)$ is the usual Sobolev space of functions with values in $\mathbb{R}^d$ (and $H^{1}(\Omega):=H^{1}(\Omega;\mathbb{R}^1)$, which belong to the Lebesgue space $L^2(\Omega)$ together with their first order weak partial derivatives. Throughout the paper, we denote vector fields with boldface letters and scalar quantities in regular font.  Furthermore, we consider them equipped with the norms 
\begin{align*}
\|\boldsymbol\vvarphi\|_{V(\Omega)}=\left(\int_\Omega|\nabla\boldsymbol\vvarphi|^2\,dx\right)^{1/2}\qquad\text{and}\qquad
\|\varphi\|_{H_0^1(\Omega)}=\left(\int_\Omega|\nabla \varphi|^2\,dx\right)^{1/2},
\end{align*}
respectively. The corresponding dual spaces are denoted by $V(\Omega)'$ and $H^{-1}(\Omega)$.

In addition, we consider $\boldg_1\in V(\Omega)'$, $g_2\in H^{-1}(\Omega)$, and we assume that $T_d\in H^1(\Omega)$ such that 
\begin{align*}
T_d\big|_{\partial\Omega\setminus\Gamma_\gamma}=0\quad\text{on}\quad\partial\Omega\setminus\Gamma_\gamma.
\end{align*} 
Therefore, note that we have $T_d\big|_{\partial \Omega}\in H^{1/2}(\partial\Omega)$. 

The weak formulation of \eqref{v-edp}-\eqref{T-Walls} is now stated as follows: Find $(\vv,T)\in V(\Omega)\times H^1(\Omega)$ with $\hat{T}=T-T_d\in H_0^1(\Omega)$ that satisfies:
\begin{align}
\label{eq:weak-v}b_1(\vv,\vv,\vvarphi)+\frac{1}{\Rey}(\nabla\vv,\nabla\vvarphi)_2-\frac{\Gra}{\Rey^2}(T\mathbf{e},\vvarphi)_2=\,&\langle\boldg_1,\vvarphi\rangle&\forall&\,\vvarphi\in V(\Omega),&\\
\label{eq:weak-T}b_2(\vv,T,\varphi)+\frac{1}{\Rey\Pra}(\nabla T,\nabla\varphi)_2=\,&\langle g_2,\varphi\rangle&\forall&\,\varphi\in H_0^1(\Omega),&
\end{align}
where $(\cdot,\cdot)_2$ denotes the inner product in $L^2$, and $b_1$, $b_2$ are defined by
\begin{align}
\label{eq:bv}b_1(\vv,{\boldsymbol w},\vvarphi)=\,&\int_\Omega (\vv\cdot\nabla {\boldsymbol w})\cdot\vvarphi\,dx,&&{\boldsymbol w},\vv,\vvarphi\in H^{1}(\Omega;\mathbb{R}^2),&\\
\label{eq:bT}b_2(\vv,T,\varphi)=\,&\int_\Omega (\vv\cdot\nabla T)\varphi\,dx,&&T\in H^1(\Omega),\,\vv,\vvarphi\in H^{1}(\Omega;\mathbb{R}^2).&
\end{align}
If $(\vv,T)\in V(\Omega)\times H^1(\Omega)$ satisfies \eqref{eq:weak-v}-\eqref{eq:weak-T} with $\hat{T}=T-T_d\in H_0^1(\Omega)$, then we say that $(T,\vv)$ is a weak solution to the Boussinesq system \eqref{v-edp}-\eqref{T-Walls}. The existence and uniqueness of a weak solution can be proved by a fixed point argument, as in \cite{ArCeRa2020}. More precisely, we have the next result. 

\begin{theorem}[Existence and uniqueness of a weak solution to the Boussinesq system]\label{Thm:weak-EU}
Let $\boldg_1\in V(\Omega)'$, $g_2\in H^{-1}(\Omega)$, and $T_d\in H^1(\Omega)$. Then there exist $\varepsilon_1,\varepsilon_2,\varepsilon_3>0$ such that if
\begin{equation}\label{cond:RePrGr}
\Rey\in(0,\varepsilon_1),\qquad
\Pra\in\left(0,\frac{\varepsilon_2}{\Rey}\right),\qquad
\Gra\in(0,\varepsilon_3\Rey),
\end{equation}
and $\boldg_1$, $g_2$, and $T_d$ are sufficiently small, there is unique weak solution $(\vv,T)\in V(\Omega)\times H^1(\Omega)$ to \eqref{v-edp}-\eqref{T-Walls}. 
\end{theorem}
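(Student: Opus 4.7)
The plan is to reduce \eqref{eq:weak-v}--\eqref{eq:weak-T} to a fixed-point problem on a closed ball of $V(\Omega)\times H_0^1(\Omega)$ and invoke Banach's theorem, thereby producing existence and uniqueness in one stroke. First, I lift the Dirichlet datum by setting $T=\hat T+T_d$ with $\hat T\in H_0^1(\Omega)$, which recasts \eqref{eq:weak-T} as an equation for $\hat T$ with a modified right-hand side containing the terms $-b_2(\vv,T_d,\varphi)-\tfrac{1}{\Rey\Pra}(\nabla T_d,\nabla\varphi)_2$. Given a candidate $(\ww,S)\in V(\Omega)\times H_0^1(\Omega)$, I define $\Phi(\ww,S)=(\vv,\hat T)$ by solving sequentially the decoupled linear problems
\begin{align*}
\tfrac{1}{\Rey}(\nabla\vv,\nabla\vvarphi)_2+b_1(\ww,\vv,\vvarphi)&=\tfrac{\Gra}{\Rey^2}((S+T_d)\mathbf{e},\vvarphi)_2+\langle\boldg_1,\vvarphi\rangle,\\
\tfrac{1}{\Rey\Pra}(\nabla\hat T,\nabla\varphi)_2+b_2(\vv,\hat T,\varphi)&=\langle g_2,\varphi\rangle-b_2(\vv,T_d,\varphi)-\tfrac{1}{\Rey\Pra}(\nabla T_d,\nabla\varphi)_2,
\end{align*}
first for $\vv\in V(\Omega)$ and then for $\hat T\in H_0^1(\Omega)$. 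Each problem is well-posed by Lax--Milgram: continuity of $b_1$ and $b_2$ on $H^1$ follows from the 2D Sobolev embedding $H^1\hookrightarrow L^4$, while the antisymmetry identities $b_1(\ww,\vv,\vv)=0$ and $b_2(\vv,\hat T,\hat T)=0$ (valid for $\ww,\vv\in V(\Omega)$) provide coercivity with constants $1/\Rey$ and $1/(\Rey\Pra)$.

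Next I would test each linear problem with its own solution to obtain a priori bounds of the form $\|\vv\|_{V(\Omega)}\le C\bigl(\tfrac{\Gra}{\Rey}(\|S\|_{H_0^1(\Omega)}+\|T_d\|_{H^1(\Omega)})+\Rey\|\boldg_1\|_{V(\Omega)'}\bigr)$ and an analogous estimate for $\|\hat T\|_{H_0^1(\Omega)}$ controlled by $\|g_2\|_{H^{-1}(\Omega)}$, $\|T_d\|_{H^1(\Omega)}$, and $\Rey\Pra\|\vv\|_{V(\Omega)}\|T_d\|_{H^1(\Omega)}$. Under the parameter conditions \eqref{cond:RePrGr} and for data of sufficiently small norm, I can then select a radius $R>0$ such that $\Phi(B_R)\subset B_R$, where $B_R$ is the closed ball of $V(\Omega)\times H_0^1(\Omega)$. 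To establish contractivity, given two pre-images $(\ww_i,S_i)$ with images $(\vv_i,\hat T_i)$, I subtract the defining equations and test with $\vv_1-\vv_2$ and $\hat T_1-\hat T_2$. The mixed trilinear residuals are dominated by
\begin{equation*}
|b_1(\ww_1-\ww_2,\vv_2,\vv_1-\vv_2)|\le C\|\ww_1-\ww_2\|_{V(\Omega)}\|\vv_2\|_{V(\Omega)}\|\vv_1-\vv_2\|_{V(\Omega)},
\end{equation*}
and similarly for $b_2(\vv_1-\vv_2,\hat T_2+T_d,\hat T_1-\hat T_2)$, yielding a Lipschitz constant of order $\Rey R+\tfrac{\Gra}{\Rey}+\Rey\Pra(R+\|T_d\|_{H^1(\Omega)})$ for $\Phi$ on $B_R$. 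Shrinking the parameters and data further if needed makes this constant strictly smaller than one, and Banach's theorem supplies the unique fixed point; setting $T=\hat T+T_d$ recovers the desired weak solution.

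The principal technical obstacle is the simultaneous calibration of the six small quantities $\Rey,\Pra,\Gra,\|\boldg_1\|_{V(\Omega)'},\|g_2\|_{H^{-1}(\Omega)},\|T_d\|_{H^1(\Omega)}$ so that one and the same radius $R$ realizes both invariance of $B_R$ and strict contraction. In particular, the buoyancy coefficient $\Gra/\Rey^2$ forces $\Gra$ to scale linearly with $\Rey$ (hence the condition $\Gra<\varepsilon_3\Rey$), while the temperature convection residual $b_2(\vv,T_d,\cdot)$, once divided by the coercivity constant $1/(\Rey\Pra)$, contributes a factor $\Rey\Pra$ that must be kept bounded, explaining the requirement $\Pra<\varepsilon_2/\Rey$. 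Threading these constraints through the estimates is the only nontrivial bookkeeping; the rest is a routine application of Lax--Milgram plus Banach fixed point along the lines of \cite{ArCeRa2020}.
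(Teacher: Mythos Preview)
Your proposal is correct and follows the same overall strategy as the paper---a Banach fixed-point argument on a closed ball of $V(\Omega)\times H_0^1(\Omega)$---but the fixed-point map you construct differs from the one in the paper. The paper defines $F=(R_1^{-1}P_1,R_2^{-1}P_2)$ where $R_i^{-1}$ are simply the inverse diffusion operators (scaled Laplacians) and \emph{all} convective, buoyancy, and source terms are frozen at the previous iterate and pushed to the right-hand sides $P_i$; each evaluation of $F$ thus amounts to solving two decoupled Poisson problems. You instead build $\Phi$ by retaining the convective terms $b_1(\ww,\cdot,\cdot)$ and $b_2(\vv,\cdot,\cdot)$ in the left-hand bilinear forms and solving two Oseen-type problems sequentially via Lax--Milgram, exploiting the antisymmetry $b_1(\ww,\vvarphi,\vvarphi)=b_2(\vv,\varphi,\varphi)=0$ for coercivity, and feeding the second problem the freshly computed $\vv$ rather than the old $\ww$. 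Your variant yields slightly cleaner a priori bounds because the convective contributions vanish automatically when testing with the solution itself; the paper's variant is structurally simpler (no Oseen coercivity to check) but must control the full nonlinear residuals $b_1(\vv,\vv,\cdot)$ and $b_2(\vv,\hat T,\cdot)$ directly in the contraction estimate. Either way one arrives at the same type of smallness conditions on $\Rey$, $\Rey\Pra$, $\Gra/\Rey$, and the data, and the conclusion is identical.
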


\begin{proof} Notice that \eqref{eq:weak-v}-\eqref{eq:weak-T} can be written as
\begin{align}
\label{eq:weak-v-hat-bis}\frac{1}{\Rey}(\nabla\vv,\nabla\vvarphi)_2=\,&\langle\boldg_1,\vvarphi\rangle-\frac{\Gra}{\Rey^2}(T_d\mathbf{e},\vvarphi)_2-b_1(\vv,\vv,\vvarphi)+\frac{\Gra}{\Rey^2}(\hat{T}\mathbf{e},\vvarphi)_2&\forall&\,\vvarphi\in V(\Omega),&\\
\label{eq:weak-T-hat-bis}\frac{1}{\Rey\Pra}(\nabla \hat{T},\nabla\varphi)_2=\,&\langle g_2,\varphi\rangle-\frac{1}{\Rey\Pra}(\nabla T_d,\nabla\varphi)_2 -b_2(\vv,\hat{T},\varphi)&\forall&\,\varphi\in H_0^1(\Omega).&
\end{align}

Let $R_1:V(\Omega)\to V(\Omega)'$ and $R_2:H_0^1(\Omega)\to H^{-1}(\Omega)$ be defined by
\begin{equation}\label{eq:R1R2}
\langle R_1(\vv),\vvarphi\rangle=\frac{1}{\Rey}(\nabla\vv,\nabla\vvarphi)_2\qquad\text{and}\qquad
\langle R_2(\hat{T}),\varphi\rangle=\frac{1}{\Rey\Pra}(\nabla\hat{T},\nabla\varphi)_2.
\end{equation}
and note that the inverse operators $R_1^{-1}$ and $R_2^{-1}$ exist. Further, let $P_1:V(\Omega)\times H_0^1(\Omega)\to V(\Omega)'$ and $P_2:V(\Omega)\times H_0^1(\Omega)\to H^{-1}(\Omega)$ be defined by
\begin{align}
\label{eq:P1}\langle P_1(\vv,\hat{T}),\vvarphi\rangle=\,&\langle\boldg_1,\vvarphi\rangle-\frac{\Gra}{\Rey^2}(T_d\mathbf{e},\vvarphi)_2-b_1(\vv,\vv,\vvarphi)+\frac{\Gra}{\Rey^2}(\hat{T}\mathbf{e},\vvarphi)_2,&\\
\label{eq:P_2}\langle P_2(\vv,\hat{T}),\varphi\rangle=\,&\langle g_2,\varphi\rangle-\frac{1}{\Rey\Pra}(\nabla T_d,\nabla\varphi)_2-b_2(\vv,\hat{T},\varphi).
\end{align}
Finally, let $F:V(\Omega)\times H_0^1(\Omega)\to V(\Omega)\times H_0^1(\Omega)$ be defined as
\begin{equation}\label{eq:F}
F=(R_1^{-1}P_1,R_2^{-1}P_2).
\end{equation}

Then, a necessary and sufficient condition to be $(\vv,T)$ a weak solution to \eqref{v-edp}-\eqref{T-Walls} is that $(\vv,\hat{T})$ is a fixed point of $F$. The existence and uniqueness of a fixed point of $F$ can be shown similarly as in \cite[Thm 2.1]{ArCeRa2020}, using the Banach fixed point theorem. The proof reduces to observe that we can always find $\tau>0$ and
$\varepsilon _{1},\varepsilon _{2},\varepsilon _{3}>0$ such that if \eqref{cond:RePrGr} holds true, then 
\begin{align}\label{cond:Smallness}
K_{1}<\,1\qquad \text{ and } \qquad K_{2}\leq \, \tau ,
\end{align}
provided that $\boldg_1$, $g_2$, and $T_d$ are sufficiently small, where  
\begin{align}
\label{eq:K1}K_1:=\,&C\left(\tau\left(\Rey+\Rey\Pra\right)+\Rey\Pra\|T_d\|_{H^1(\Omega)}+\frac{\Gra}{\Rey}\right),\\
\label{eq:K2}K_2:=\,&C\left(\tau^2\left(\Rey+\Rey\Pra\right)+\tau\left(\Rey\Pra\|T_d\|_{H^1(\Omega)}+\frac{\Gra}{\Rey}\right)
+\Rey\|\boldg_1\|_{V(\Omega)'}\right.\notag\\
&\left.+\Rey\Pra\|g_2\|_{H^{-1}(\Omega)}+\|T_d\|_{H^1(\Omega)}+\frac{\Gra}{\Rey}\|T_d\|_{H^1(\Omega)}\right),
\end{align} 
and $C$ is a positive constant independent of $\boldg_1$, $g_2$, $T_d$, $\Rey$, $\Pra$, and $\Gra$. We skip the details to avoid repetition, see \cite[Thm 2.1]{ArCeRa2020}. Hence, $F$ is a contraction from the closed ball 
\begin{align*}
\bar{B}_{\tau}:=\{(\boldsymbol\vvarphi,\varphi)\in V(\Omega)\times H_0^1(\Omega)\,:\,\|\boldsymbol\vvarphi\|_{V(\Omega)}+\|\varphi\|_{H_0^1(\Omega)}\leq\tau\},
\end{align*}
into itself. Then, by the Banach's fixed point theorem there
is a unique weak solution to \eqref{v-edp}-\eqref{T-Walls} in $\bar{B}_{\tau}$. 
\end{proof}

Note that Theorem \ref{Thm:weak-EU} yields a weak solution with $H^{1/2}$ boundary regularity. Improved regularity can be achieved by assuming extra regularity on $\Gamma_\gamma$, $\boldg_1$, $g_2$, and $T_d$, as the next theorem shows. 

\begin{theorem}[Boundary regularity]\label{Thm:BdryReg}
Assume that the boundary part $\Gamma_\gamma$ is of class $C^2$. Further, consider $\boldg_1\in L^2(\Omega;\mathbb{R}^2)$, $g_2\in L^2(\Omega)$, and $T_d\in H^{2}(\Omega)$ . If $\Rey$, $\Pra$, $\Gra$ satisfy \eqref{cond:RePrGr} and $\boldg_1$, $g_2$, $T_d$ satisfy the smallness condition in Theorem \ref{Thm:weak-EU}, then the weak solution $(\vv,T)\in V(\Omega)\times H^1(\Omega)$ to \eqref{v-edp}-\eqref{T-Walls} satisfies
\begin{align}\label{eq:v-T-bdry-reg}
\vv\big|_{\Gamma_\gamma^\varepsilon}\in H^{3/2}(\Gamma_\gamma^\varepsilon;\mathbb{R}^2),\qquad\qquad
T\big|_{\Gamma_\gamma^\varepsilon}\in H^{3/2}(\Gamma_\gamma^{\varepsilon}),
\end{align}  
for every relative open subset $\Gamma_\gamma^\varepsilon$ of $\partial\Omega$ strictly contained in $\Gamma_\gamma$, i.e., $\overline{\Gamma_\gamma^\varepsilon}\subset \Gamma_\gamma$. 
\end{theorem}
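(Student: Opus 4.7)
The plan is to upgrade $(\vv,T)$ to $H^2$ regularity in a neighborhood of $\overline{\Gamma_\gamma^\varepsilon}$, from which the claimed $H^{3/2}$ boundary trace follows at once. Since $\overline{\Gamma_\gamma^\varepsilon}$ lies at positive distance from $\partial\Gamma_\gamma$ and from the merely Lipschitz part of $\partial\Omega$, I would cover it by finitely many balls whose closed intersection with $\partial\Omega$ sits inside the $C^2$ portion $\Gamma_\gamma$. A $C^2$ boundary-straightening diffeomorphism and a smooth cut-off on each ball reduce the analysis to local regularity for a Stokes and a Poisson problem with homogeneous Dirichlet condition on the flat piece of a half-ball, the standard setting where the classical $W^{2,q}$ theory applies.

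The nonlinear coupling is handled by a two-step bootstrap exploiting the $2$D embedding $H^1(\Omega)\hookrightarrow L^p(\Omega)$ for every $p<\infty$. Setting $\hat T:=T-T_d\in H^1_0(\Omega)$ with $T_d\in H^2(\Omega)$, the shifted temperature satisfies a Poisson equation with forcing $g_2+\frac{1}{\Rey\Pra}\Delta T_d-\vv\cdot\nabla T$, which, by H\"older's inequality applied to $\vv\in L^p$ and $\nabla T\in L^2$, lies in $L^q_{\mathrm{loc}}$ for every $q<2$. Similarly, the Stokes momentum equation for $\vv$ has right-hand side $\boldg_1+\frac{\Gra}{\Rey^2}T\mathbf{e}-\vv\cdot\nabla\vv\in L^q_{\mathrm{loc}}$ for every $q<2$. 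Local $W^{2,q}$ regularity for Poisson with Dirichlet data and for the stationary Stokes system with $C^2$ boundary then yields $\vv,T\in W^{2,q}_{\mathrm{loc}}$ in a neighborhood of $\overline{\Gamma_\gamma^\varepsilon}$ for every such $q$.

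Picking $q$ sufficiently close to $2$, the $2$D Sobolev embedding $W^{2,q}\hookrightarrow L^\infty\cap W^{1,q^*}$ with $q^*=2q/(2-q)$ arbitrarily large gives $\vv\in L^\infty_{\mathrm{loc}}$ and $\nabla\vv,\nabla T\in L^{q^*}_{\mathrm{loc}}$. Hence $\vv\cdot\nabla\vv$ and $\vv\cdot\nabla T$ now belong to $L^2_{\mathrm{loc}}$ near $\Gamma_\gamma^\varepsilon$, and reapplying local $H^2$ regularity for Poisson and Stokes closes the bootstrap: $\vv,T\in H^2(\Omega')$ for some open $\Omega'\supset\overline{\Gamma_\gamma^\varepsilon}$. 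Taking traces on the $C^2$ portion $\Gamma_\gamma^\varepsilon$ then delivers $\vv|_{\Gamma_\gamma^\varepsilon}\in H^{3/2}(\Gamma_\gamma^\varepsilon;\mathbb{R}^2)$ and $T|_{\Gamma_\gamma^\varepsilon}\in H^{3/2}(\Gamma_\gamma^\varepsilon)$. The main technical difficulty is precisely the criticality at the first step: in two dimensions the mere bound $\vv\in H^1$ does not place $\vv\cdot\nabla\vv$ into $L^2$, so a preliminary sub-critical $W^{2,q}$ pass with $q<2$ is indispensable before the $H^2$ estimate can be closed. The smallness conditions of Theorem~\ref{Thm:weak-EU} are needed only to produce the weak solution on which the regularity bootstrap is subsequently run.
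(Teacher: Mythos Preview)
Your proposal is correct and follows essentially the same route as the paper: a two-step bootstrap in which the convective terms first lie in $L^q$ for every $q<2$, local $W^{2,q}$ Stokes/Poisson regularity near the $C^2$ boundary portion then gives $\vv\in L^\infty_{\mathrm{loc}}$, and a second pass closes at $H^2$, whence the $H^{3/2}$ trace follows. The paper's only organizational difference is that it treats $\vv$ first via nested subdomains (citing Galdi for Stokes), and then handles $T$ in a single step with an explicit cutoff and Evans' Poisson regularity, since at that point $\vv\in L^\infty$ is already available.
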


\begin{proof}
Since $\vv$ satisfies \eqref{eq:weak-v}, we notice that
\begin{align*}
(\nabla\vv,\nabla\vvarphi)_2=\langle{\bf G}_1,\vvarphi\rangle\qquad\forall\,\vvarphi\in V(\Omega),
\end{align*}
where $\mathbf{G}_1\in V(\Omega)'$ is defined by
\begin{align*}
\langle{\bf G}_1,\vvarphi\rangle=\,&\Rey\left(({\bf g}_1,\vvarphi)_2+\frac{\Gra}{\Rey^2}(T{\bf e},\vvarphi)_2-b_1(\vv,\vv,\vvarphi)\right)\\
=\,&\Rey\int_{\Omega}\left({\bf g}_1+\frac{\Gra}{\Rey^2}T{\bf e}+\vv\cdot\nabla\vv\right)\cdot\vvarphi\,dx.
\end{align*}
Given that $\vv\in H^1(\Omega;\mathbb{R}^2)$ and $H^1(\Omega)\hookrightarrow L^q(\Omega)$ for every $2\leq q<\infty$, we observe that $\vv\in L^q(\Omega;\mathbb{R}^2)$ for every $2\leq q<\infty$. In addition, $\nabla\vv\in L^2(\Omega;\mathbb{R}^{2\times 2})$. Then, $\vv\cdot\nabla\vv\in L^r(\Omega;\mathbb{R}^2)$ for every $1<r<2$. Therefore, ${\bf G}_1\in L^r(\Omega;\mathbb{R}^2)$ for every $1<r<2$. 

Consider a relative open subset  $\Gamma_\gamma^\varepsilon$ of $\partial\Omega$, strictly contained in $\Gamma_\gamma$. Let $\Omega_0\subset\Omega$ be an open set such that $\partial\Omega_0\cap(\partial\Omega\setminus\Gamma_\gamma)=\emptyset$ and $\Gamma_\gamma^\varepsilon\subset\partial\Omega_0\cap\Gamma_\gamma$. Notice that the boundary portion $\sigma=\partial\Omega_0\cap\Gamma_\gamma$ of the set $\Omega_0$ is of class $C^2$. Let $\Omega'\subset\Omega_0$ be an open set such that $\partial\Omega'\cap(\partial\Omega_0\setminus\Gamma_\gamma)=\emptyset$, $(\partial\Omega'\cap\partial\Omega_0)\cap\Gamma_\gamma\subsetneq(\partial\Omega_0\cap\partial\Omega)\cap\Gamma_\gamma$, and $\Gamma_\gamma^\varepsilon\subset\partial\Omega'\cap\partial\Omega_0$. Thus, by \cite[Thm. IV.5.1, p. 276]{Ga2011} (see also \cite[Lemma IV.1.1, p. 235]{Ga2011}), we deduce that $\vv\in W^{2,r}(\Omega';\mathbb{R}^2)$ where $1<r<2$. Exploiting the embedding $W^{2,r}(\Omega')\hookrightarrow L^\infty(\Omega')$ we get that $\vv\in L^\infty(\Omega';\mathbb{R}^2)$. In particular, this implies that $\vv\cdot\nabla\vv\in L^2(\Omega';\mathbb{R}^2)$. Therefore, $\mathbf{G}_1\in L^2(\Omega';\mathbb{R}^2)$. Repeating this argument, we deduce that $\vv\in H^2(\Omega'';\mathbb{R}^2)$ for some $\Omega''\subset\Omega'$ such that $\Gamma_\gamma^\varepsilon\subset\partial\Omega''$. Then, by the embedding $H^2(\Omega'')\hookrightarrow H^{3/2}(\partial\Omega'')$, we obtain $
\vv\big|_{\Gamma_\gamma^\varepsilon}\in H^{3/2}(\Gamma_\gamma^\varepsilon;\mathbb{R}^2)$.

Without loss generality, the set $\Omega''$ can be assumed to be of class $C^2$ and such that $\Gamma_\gamma^\varepsilon\subsetneq\partial\Omega''$. Let $\eta\in C^\infty(\bar{\Omega})$ be such that $\eta=1$ on $\Gamma_\gamma^\varepsilon$ and $\eta=0$ on $\partial\Omega''\setminus\Gamma_\gamma$. Notice that $\eta\hat{T}\in H^1_0(\Omega'')$. Since $\hat{T}$ satisfies \eqref{eq:weak-T}, we observe that 
\begin{align*}
(\nabla\hat{T},\nabla\varphi)_2=\langle G_2,\varphi\rangle\qquad\forall\,\varphi\in H_0^1(\Omega),
\end{align*}
where $G_2\in H^{-1}(\Omega)$ is given by
\begin{align*}
\langle G_2,\varphi\rangle=\,&\Rey\Pra\left((g_2,\varphi)_2-\frac{1}{\Rey\Pra}(\nabla T_d,\nabla\varphi)_2-b_2(\vv,T,\varphi)\right).
\end{align*}
A straightforward computation yields that
\begin{align*}
\int_\Omega\nabla(\eta\hat{T})\cdot\nabla\varphi\,dx=\int_{\Omega}\nabla\hat{T}\cdot\nabla(\eta\varphi)\,dx-\int_\Omega\left(2\nabla\eta\cdot\nabla\hat{T}+\hat{T}\Delta\eta\right)\varphi\,dx.
\end{align*}
Thus, 
\begin{align*}
&\int_{\Omega}\nabla(\eta\hat{T})\cdot\nabla\varphi\,dx=\langle G_2,\eta\varphi\rangle-\int_\Omega\left(2\nabla\eta\cdot\nabla\hat{T}+\hat{T}\Delta\eta\right)\varphi\,dx\\
&\qquad=-\int_\Omega\nabla T_d\cdot\nabla\varphi\,dx+\int_\Omega\left(\Rey\Pra (g_2-\vv\cdot \nabla T)-2\nabla\eta\cdot\nabla\hat{T}-\hat{T}\Delta\eta\right)\varphi\,dx,
\end{align*}
for every $\varphi\in H^1_0(\Omega)$. From this we get that
\begin{align*}
\int_{\Omega''}\nabla(\eta\hat{T})\cdot\nabla\varphi\,dx=\,&\langle G,\varphi\rangle\qquad\forall\,\varphi\in H^1_0(\Omega''),
\end{align*}
where $G\in H^{-1}(\Omega'')$ is given by
\begin{align*}
\langle G,\varphi\rangle=-\int_{\Omega''}\nabla T_d\cdot\nabla\varphi\,dx+\int_{\Omega''}\left(\Rey\Pra (g_2-\vv\cdot \nabla T)-2\nabla\eta\cdot\nabla\hat{T}-\hat{T}\Delta\eta\right)\varphi\,dx.
\end{align*}
Exploiting that $T_d\in H^2(\Omega)$, we get 
\begin{align*}
\langle G,\varphi\rangle=\int_{\Omega''}\varphi\Delta T_d\,dx+\int_{\Omega''}\left(\Rey\Pra (g_2-\vv\cdot \nabla T)-2\nabla\eta\cdot\nabla\hat{T}-\hat{T}\Delta\eta\right)\varphi\,dx.
\end{align*}
From this, we observe that $G\in L^2(\Omega'')$ since $\vv\in L^\infty(\Omega'';\mathbb{R}^2)$. In summary, we have that $\eta\hat{T}$ weakly satisfies
\begin{align*}
-\Delta(\eta\hat{T})=\,&G&\text{in}&\quad\Omega'',&\\
\eta\hat{T}=\,&0&\text{on}&\quad\partial\Omega'',&
\end{align*}
where $G\in L^2(\Omega'')$ and $\Omega''$ is of class $C^2$. Then, by \cite[Them 4, p. 334]{Ev2010}, we obtain that $\eta\hat{T}\in H^2(\Omega'')$. By the embedding $H^2(\Omega'')\hookrightarrow H^{3/2}(\partial\Omega'')$ and the fact that $\eta=1$ on $\Gamma_\gamma^\varepsilon$, we deduce that $T\big|_{\Gamma_\gamma^\varepsilon}\in H^{3/2}(\Gamma_\gamma^\varepsilon)$.
\end{proof}

\section{The shape optimization problem}\label{Sec:Optimization}
We now address the existence of an optimal domain to achieve a fluid temperature as uniform as possible. We focus on two dimensional cubes and allow perturbations only on the bottom boundary. More precisely, we consider the following class of domains (see also Fig. \ref{Fig:Admissible1}).

\begin{figure}[ht!]
\centering
\includegraphics[scale=.3]{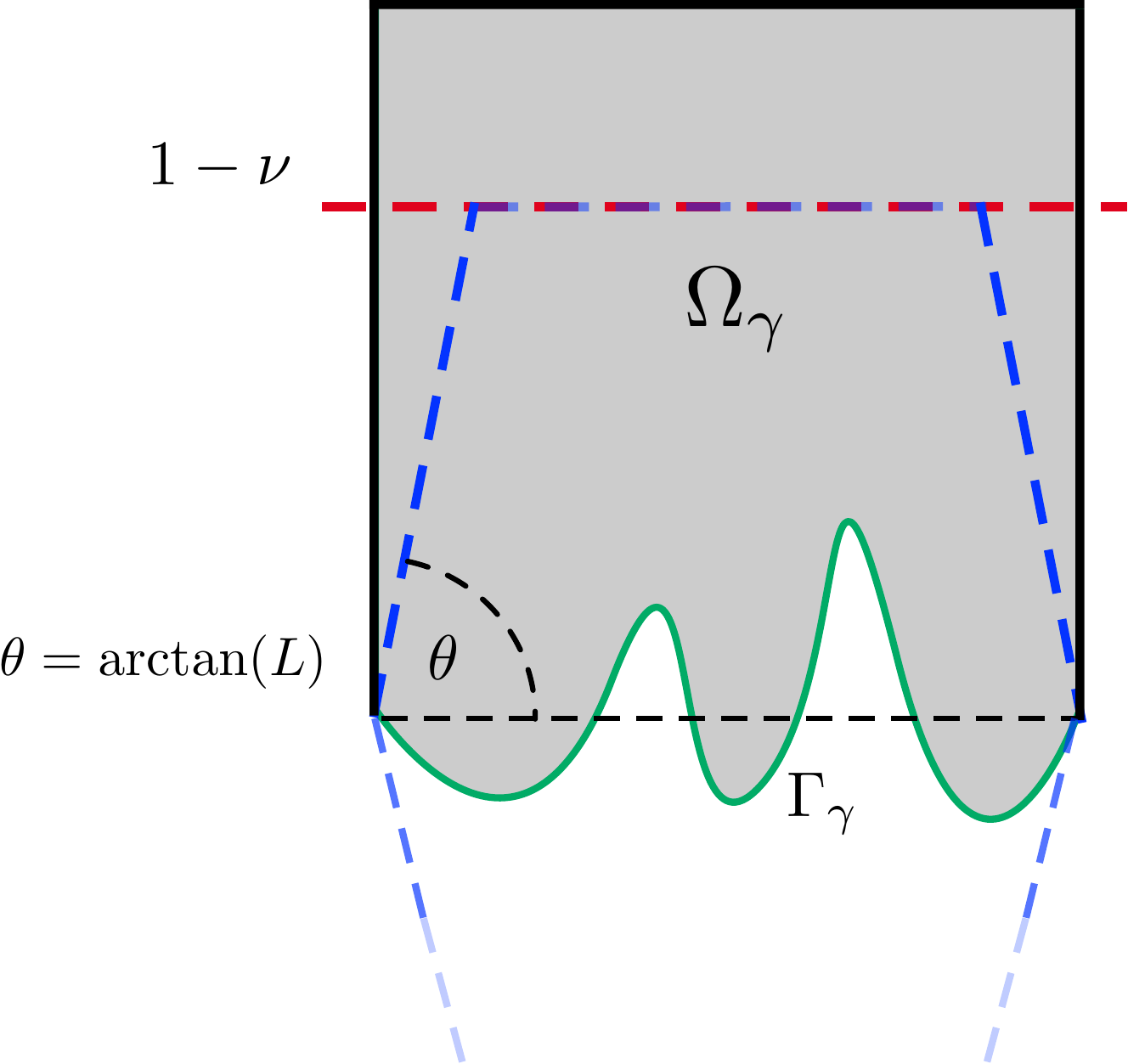}
\label{Fig:Admissible1}
\caption{Admissible domain $\Omega_\gamma\in \mathcal{O}$ in gray, $\Gamma_\gamma$ in {\color{ForestGreen}green}, where $1-\nu$ is represented by the dashed line in {\color{red}red} and the dashed line in {\color{blue}blue} represents the region where $\Gamma_\gamma$ might be located and induced by the obstacle constraint $1-\nu$ and derivative bound $L$.}
\end{figure}

\begin{definition}[Class of Admissible Domains $\mathcal{O}$] 
Let $L>0$ and $\nu\in(0,1)$. We say that $\Omega\subset\mathbb{R}^2$ is an admissible domain if 
\begin{equation}\label{eq:AdmissDomain}
\Omega=\{(x_1,x_2)\in\mathbb{R}^2\,:\,0< x_1< 1,\,\gamma(x_1)< x_2< 1\},
\end{equation}
for some $\gamma\in H_0^1(0,1)$ such that $\gamma\leq 1-\nu$  a.e. in $(0,1)$ and $|\gamma'|\leq L$ a.e. in $(0,1)$. We denote by $\mathcal{O}$ the class of all admissible domains $\Omega$.
\end{definition}

 If $\Omega\in\mathcal{O}$ is given by \eqref{eq:AdmissDomain}, we write $\Omega=\Omega_\gamma$. Notice that every $\Omega_\gamma$ is a Lipschitz domain. Also, let
\begin{align}
\Gamma_\gamma=\,&\{(x_1,x_2)\in\partial{\Omega_{\gamma}}\,:\,x_2=\gamma(x_1)\}.
\end{align}

We are interested in the following problem:
\begin{align}\label{shape}
&\text{Find }\Omega_{\gamma^*}\in \mathcal{O}\text{ such that }
J(T^*,\gamma^*)=\inf_{\Omega_\gamma\in \mathcal{O}}J(T,\gamma),\notag\\
&\text{subject to }(\vv,T)\text{ is the weak solution to }\eqref{v-edp}-\eqref{T-Walls} \text{ in }\Omega_\gamma,
\end{align}
where, 
\begin{align}
\label{J-Def}J(T,\gamma):=\,&\|T-I(T,\gamma)\|^2_{L^2(\Omega_\gamma)}+
\frac{\lambda_1}{2}\|\gamma\,'\|^2_{L^2(0,1)}+\frac{\lambda_2}{2}\left(\int_0^1\gamma(\xi)\,d\xi\right)^2,
\end{align}
for positive constants $\lambda_1$ and $\lambda_2$, and
\begin{align}\label{eq:average}
I(T,\gamma):=\frac{1}{|\Omega_\gamma|}\int_{\Omega_\gamma}T(x)\,dx.
\end{align}

The first term of $J$ corresponds to minimizing the variance of the temperature distribution, the second one is introduced to regularize $\gamma$, and the last term is related to maintain a constant volume for the domain. 

Since $\gamma\leq 1-\nu$ and $|\gamma'|\leq L$ a.e. in $(0,1)$ for every $\Omega_\gamma\in\mathcal{O}$, where $L$  does not depend on $\gamma$, we deduce that there exists $\mathrm{M}>0$ such that $\Omega_{\gamma}\subset\Omega^{\mathrm{M}}:=(0,1)\times(-\mathrm{M},1)$ for every $\Omega_\gamma\in\mathcal{O}$. Hereafter, we assume that the boundary data $T_d$ in the Boussinesq system posed in $\Omega_\gamma$ is the restriction in the trace sense of a function in $H^1(\Omega^\mathrm{M})\cap C_c(\Omega^\mathrm{M})$. As before, we denote the latter also by $T_d$. In addition, we consider that the right-hand sides $\boldg_1$ and $g_2$ in the Boussinesq system posed in $\Omega_\gamma$ are restrictions to $\Omega_\gamma$ of functions in $L^2(\Omega^\mathrm{M};\mathbb{R}^d)$ with $d=2$ and $d=1$, respectively, which we likewise denote as $\boldg_1$ and $g_2$. In the following, we shall use repeatedly that functions ${\bf g}_1\in L^2(\Omega^\mathrm{M};\mathbb{R}^2)$, $g_2\in L^2(\Omega^\mathrm{M})$, $T_d\in H^1(\Omega^\mathrm{M})\cap C_c(\Omega^\mathrm{M})$ {\em satisfy the smallness condition in Theorem \ref{Thm:weak-EU}}. This means that condition \eqref{cond:Smallness} in the proof of Theorem \ref{Thm:weak-EU} holds for $K_1$ and $K_2$ replaced by
\begin{align*}
K_1^\mathrm{M}:=\,&C\left(\tau\left(\Rey+\Rey\Pra\right)+\Rey\Pra\|T_d\|_{H^1(\Omega^\mathrm{M})}+\frac{\Gra}{\Rey}\right),\\
K_2^\mathrm{M}:=\,&C\left(\tau^2\left(\Rey+\Rey\Pra\right)+\tau\left(\Rey\Pra\|T_d\|_{H^1(\Omega^\mathrm{M})}+\frac{\Gra}{\Rey}\right)
+\Rey\|\boldg_1\|_{V(\Omega^\mathrm{M})'}\right.\notag\\
&\left.+\Rey\Pra\|g_2\|_{H^{-1}(\Omega^\mathrm{M})}+\|T_d\|_{H^1(\Omega^\mathrm{M})}+\frac{\Gra}{\Rey}\|T_d\|_{H^1(\Omega^\mathrm{M})}\right).
\end{align*} 
These considerations allow us to deduce a uniform bound for the weak solutions of \eqref{v-edp}-\eqref{T-Walls} in $\Omega_\gamma$, as $\Omega_\gamma$ varies in $\mathcal{O}$. More precisely, we have the following result.    

\begin{lemma}\label{Lem:UniformBound}
Consider $\Omega_\gamma\in\mathcal{O}$. Let $\boldg_1\in L^2(\Omega^\mathrm{M};\mathbb{R}^2)$, $g_2\in L^2(\Omega^\mathrm{M})$, and $T_d\in H^1(\Omega^\mathrm{M})\cap C_c(\Omega^\mathrm{M})$. If $\Rey$, $\Pra$, $\Gra$ satisfy \eqref{cond:RePrGr} and $\boldg_1$, $g_2$, $T_d$ satisfy the smallness condition in Theorem \ref{Thm:weak-EU}, then the weak solution $(\vv,T)\in V(\Omega_\gamma)\times H^1(\Omega_\gamma)$ to \eqref{v-edp}-\eqref{T-Walls} in $\Omega_\gamma$ satisfies 
\begin{align}\label{eq:Bound-uThat}
\|\vv\|_{V(\Omega_\gamma)}+\|\hat{T}\|_{H_0^1(\Omega_\gamma)}\leq\tau,
\end{align} 
where $\hat{T}=T-T_d$, for some $\tau>0$ not depending on $\gamma$.  
\end{lemma}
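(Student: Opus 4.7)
The plan is to re-run the fixed-point argument of Theorem \ref{Thm:weak-EU} but to track carefully where the constant $C$ in \eqref{eq:K1}--\eqref{eq:K2} enters, and then to show that it can be chosen independently of $\gamma$ by exploiting the uniform Lipschitz character of the family $\mathcal{O}$. Concretely, $C$ arises from (i) the Poincar\'e constant on $\Omega_\gamma$, (ii) the Sobolev embedding constants $H^1(\Omega_\gamma)\hookrightarrow L^q(\Omega_\gamma)$ used to control the trilinear forms $b_1$ and $b_2$, (iii) the embedding $H^{1/2}(\partial\Omega_\gamma)\hookrightarrow L^2(\partial\Omega_\gamma)$ needed to handle $T_d$, and (iv) continuity constants of the Leray--type projections associated with $R_1^{-1}$, $R_2^{-1}$. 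All of these depend only on the Lipschitz character of $\Omega_\gamma$ and on its diameter.

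First, I would observe that since $\gamma\in H_0^1(0,1)$ satisfies $|\gamma'|\leq L$ a.e.\ on $(0,1)$ and $\gamma\leq 1-\nu$, each $\Omega_\gamma$ is contained in the fixed cylinder $\Omega^{\mathrm{M}}$, has diameter bounded in terms of $L$ and $\nu$, and admits a uniform cone condition with aperture and radius depending only on $L$. Standard extension-operator constructions on Lipschitz graphs (e.g.\ Stein's extension) then yield bounded linear extensions $E_\gamma:H^1(\Omega_\gamma)\to H^1(\mathbb{R}^2)$ whose operator norms are controlled purely by $L$. Composing with the fixed embeddings on $\Omega^{\mathrm{M}}$ gives uniform Sobolev embedding constants $H^1(\Omega_\gamma)\hookrightarrow L^q(\Omega_\gamma)$ for every fixed $q\in[2,\infty)$, as well as a uniform Poincar\'e constant on $H_0^1(\Omega_\gamma)$ and on $V(\Omega_\gamma)$ (the diameter is bounded uniformly). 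In turn, the trilinear forms $b_1$ and $b_2$ satisfy estimates of the type $|b_i(\vv,w,\vvarphi)|\leq C_\star\|\vv\|\,\|w\|\,\|\vvarphi\|$ with $C_\star$ depending only on $L$ and $\nu$.

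Next I would note that the data can be estimated in a $\gamma$-independent way: since $\boldg_1\in L^2(\Omega^{\mathrm{M}};\mathbb{R}^2)$, $g_2\in L^2(\Omega^{\mathrm{M}})$, and $T_d\in H^1(\Omega^{\mathrm{M}})\cap C_c(\Omega^{\mathrm{M}})$, their restrictions satisfy
\begin{equation*}
\|\boldg_1\|_{V(\Omega_\gamma)'}\leq C_{\mathrm{P}}\|\boldg_1\|_{L^2(\Omega^{\mathrm{M}})},\qquad
\|g_2\|_{H^{-1}(\Omega_\gamma)}\leq C_{\mathrm{P}}\|g_2\|_{L^2(\Omega^{\mathrm{M}})},\qquad
\|T_d\|_{H^1(\Omega_\gamma)}\leq \|T_d\|_{H^1(\Omega^{\mathrm{M}})},
\end{equation*}
with $C_{\mathrm{P}}$ the uniform Poincar\'e constant from the previous step. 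Plugging these into the definitions of $K_1$ and $K_2$ produces $\gamma$-independent quantities $K_1^{\mathrm{M}}$, $K_2^{\mathrm{M}}$ (exactly those written immediately before the lemma). The smallness hypothesis asserts that there exist $\tau>0$ and $\varepsilon_1,\varepsilon_2,\varepsilon_3>0$ such that $K_1^{\mathrm{M}}<1$ and $K_2^{\mathrm{M}}\leq \tau$.

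Finally, I would repeat verbatim the contraction argument of Theorem \ref{Thm:weak-EU} for each fixed $\Omega_\gamma\in\mathcal{O}$, using the uniform constant $C$ in place of the domain-dependent one. This yields a fixed point $(\vv,\hat{T})$ of $F$ in the closed ball $\bar{B}_\tau\subset V(\Omega_\gamma)\times H_0^1(\Omega_\gamma)$, and by uniqueness this is exactly the weak solution given by Theorem \ref{Thm:weak-EU}. Since $\tau$ was chosen independently of $\gamma$, the bound \eqref{eq:Bound-uThat} follows. The principal technical obstacle is step two, i.e.\ certifying that the Sobolev, Poincar\'e, and extension constants can indeed be chosen uniformly across $\mathcal{O}$; once the uniform cone condition is invoked, this is classical, but it is the step where the structural assumption $|\gamma'|\leq L$ is essential and cannot be weakened.
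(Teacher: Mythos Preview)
Your proposal is correct and follows essentially the same approach as the paper: both arguments reduce the lemma to showing that the constant $C$ in \eqref{eq:K1}--\eqref{eq:K2} can be chosen independently of $\gamma$, and both obtain this from a uniform Poincar\'e constant (via the uniform diameter bound $\Omega_\gamma\subset\Omega^{\mathrm{M}}$) together with a uniform Sobolev embedding constant (via the uniform cone property guaranteed by $|\gamma'|\leq L$). The paper is slightly more economical in that it tracks only the two constants actually used---Poincar\'e and $H^1\hookrightarrow L^4$---whereas your items (iii) and (iv) are not in fact needed (no trace estimate on $T_d$ enters since $T_d$ is handled directly as an $H^1(\Omega^{\mathrm{M}})$ function, and the norms of $R_1^{-1},R_2^{-1}$ are already absorbed into the Poincar\'e constant).
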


\begin{proof}
Let $\Omega_\gamma\in\mathcal{O}$. The proof reduces to observe that the constant $C$ in the definition of $K_1^\mathrm{M}$ and $K_2^\mathrm{M}$ can be selected to be independent of $\gamma$, see \eqref{eq:K1}-\eqref{eq:K2}. In fact, $C$ is built upon constants $C_1, C_2>0$ that satisfy
\begin{align*}
\|w\|_{L^2(\Omega_\gamma)}\leq\,& C_1(\Omega_\gamma)\|\nabla w\|_{L^2(\Omega_\gamma)}\leq C_1\|\nabla w\|_{L^2(\Omega_\gamma)}&\forall\,w\in H_0^1(\Omega_\gamma)&,\\
\|w\|_{L^4(\Omega_\gamma)}\leq\,& C_2(\Omega_\gamma)\|w\|_{H^1(\Omega_\gamma)}\leq C_2\|w\|_{H^1(\Omega_\gamma)}&\forall\,w\in H^1(\Omega_\gamma)&,%\\
%\|w\|_{H^{1/2}(\partial\Omega_\gamma)}\leq\,& C_3(\Omega_\gamma)\|w\|_{H^1(\Omega_\gamma)}\leq C_3\|w\|_{H^1(\Omega_\gamma)}&\forall\,w\in H^1(\Omega_\gamma)&,
\end{align*}
for some $C_1(\Omega_\gamma), C_2(\Omega_\gamma)>0$ (see the proof of Theorem 2.1 in \cite{ArCeRa2020}). Also, $C$ depends linearly on integer powers of each of them. The constant $C_1(\Omega_\gamma)$ in the Poincar\'e inequality can be chosen to depend only on the diameter of $\Omega_\gamma$, see \cite[Thm 1.4.3.4]{Gr1985}. Every $\Omega\in\mathcal{O}$ satisfies $\Omega^\mathrm{m}\subset\Omega\subset\Omega^\mathrm{M}$, where $\Omega^\mathrm{m}:=(0,1)\times(1-\nu,1)$. Then, the diameter of $\Omega_\gamma$ is bounded from above and from below, and hence $C_1$ is well-defined. In order to see that $C_2$ is well-defined as well, we first notice that each $\Omega\in\mathcal{O}$ has the cone property. Moreover, the cone determining the cone property of each $\Omega\in\mathcal{O}$ can be selected to be independent of $\Omega$. This is a consequence of that every $\Omega\in\mathcal{O}$ is the unit cube with the bottom boundary replaced by the graph of an absolutely continuous function whose derivative is bounded a.e. in $(0,1)$ for a constant that does not depend on $\Omega$. Then, we observe that the constant $C_2(\Omega_\gamma)$ from the embedding $H^1(\Omega_\gamma)\hookrightarrow L^4(\Omega_\gamma)$ can be chosen to depend only on the cone determining the cone property, see \cite[Proof of Thm 5.4]{Ad1975}. This yields the well-definition of $C_2$. 
\end{proof}

We are now in a position to prove the existence of an optimal domain.

\begin{theorem}[Existence of an optimal domain]
Consider $\boldg_1\in L^2(\Omega^{\mathrm{M}};\mathbb{R}^2)$, $g_2\in L^2(\Omega^{\mathrm{M}})$, and $T_d\in H^1(\Omega^{\mathrm{M}})\cap C_c(\Omega^{\mathrm{M}})$. If $\Rey$, $\Pra$, $\Gra$ satisfy \eqref{cond:RePrGr} and $\boldg_1$, $g_2$, $T_d$ satisfy the smallness condition in Theorem \ref{Thm:weak-EU}, then there exists a solution $\Omega_{\gamma^*}\in \mathcal{O}$ to the shape optimization problem \eqref{shape}.
\end{theorem}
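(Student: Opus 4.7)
The plan is to use the direct method of the calculus of variations. Let $\{\gamma_n\}\subset H^1_0(0,1)$ define a minimizing sequence for $J$, i.e.\ $J(T_n,\gamma_n)\to \inf_{\Omega_\gamma\in\mathcal{O}} J(T,\gamma)$, where $(\vv_n,T_n)$ is the weak solution on $\Omega_{\gamma_n}$ provided by Theorem \ref{Thm:weak-EU}. Since each $\gamma_n$ is $L$-Lipschitz with $\gamma_n(0)=\gamma_n(1)=0$ and $\gamma_n\le 1-\nu$, Arzelà--Ascoli together with weak compactness of $\gamma_n'$ in $L^2(0,1)$ produces a subsequence (not relabeled) with $\gamma_n\to \gamma^*$ uniformly on $[0,1]$ and $\gamma_n\rightharpoonup \gamma^*$ in $H^1(0,1)$. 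The constraints $|\gamma^{*\prime}|\le L$ and $\gamma^*\le 1-\nu$ survive in the limit, so $\Omega_{\gamma^*}\in\mathcal{O}$. Uniform convergence also gives $\chi_{\Omega_{\gamma_n}}\to\chi_{\Omega_{\gamma^*}}$ in every $L^p(\Omega^{\mathrm{M}})$ with $p<\infty$, and $|\Omega_{\gamma_n}|\to|\Omega_{\gamma^*}|$.

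I then extend $\vv_n$ and $\hat{T}_n:=T_n-T_d$ by zero from $\Omega_{\gamma_n}$ to the ambient cube $\Omega^{\mathrm{M}}$. Both have vanishing trace on $\partial\Omega_{\gamma_n}$, so the extensions lie in $H^1_0(\Omega^{\mathrm{M}})$ and inherit the uniform bound of Lemma \ref{Lem:UniformBound} (the hypothesis $T_d\in C_c(\Omega^{\mathrm{M}})$ ensures that $T_d|_{\Omega_\gamma}$ vanishes on $\partial\Omega_\gamma\setminus\Gamma_\gamma$ and is thus an admissible boundary datum for every $\Omega_\gamma\in\mathcal{O}$). Extracting further subsequences, I obtain $\vv_n\rightharpoonup \vv^*$ and $\hat{T}_n\rightharpoonup \hat{T}^*$ weakly in $H^1_0(\Omega^{\mathrm{M}})$ and strongly in $L^q(\Omega^{\mathrm{M}})$ for every $q<\infty$ by Rellich--Kondrachov. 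Because $\vv_n$ and $\hat{T}_n$ vanish outside $\Omega_{\gamma_n}$ and $\chi_{\Omega_{\gamma_n}}\to \chi_{\Omega_{\gamma^*}}$ in $L^1$, the weak limits vanish a.e.\ outside $\Omega_{\gamma^*}$, so their restrictions satisfy $\vv^*\in V(\Omega_{\gamma^*})$ and $\hat{T}^*\in H^1_0(\Omega_{\gamma^*})$; divergence-freeness survives under weak $H^1$-convergence.

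To identify $(\vv^*,T^*)$ with $T^*:=\hat{T}^*+T_d$ as the weak solution on $\Omega_{\gamma^*}$, I fix $\vvarphi\in C_c^\infty(\Omega_{\gamma^*};\mathbb{R}^2)$ with $\diver\vvarphi=0$. Uniform convergence $\gamma_n\to\gamma^*$ and compactness of $\mathrm{supp}\,\vvarphi$ inside $\Omega_{\gamma^*}$ force $\mathrm{supp}\,\vvarphi\subset\Omega_{\gamma_n}$ for all sufficiently large $n$, so $\vvarphi$ is a legitimate test in \eqref{eq:weak-v} on $\Omega_{\gamma_n}$. Linear terms pass to the limit by weak $H^1$-convergence of $\vv_n$ and strong $L^2$-convergence of $T_n$; the trilinear term is rewritten as $b_1(\vv_n,\vv_n,\vvarphi)=-\int(\vv_n\otimes\vv_n):\nabla\vvarphi\,dx$ using $\diver\vv_n=0$ and compact support of $\vvarphi$, which then converges via strong $L^4$-convergence of $\vv_n$. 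A parallel argument handles \eqref{eq:weak-T}. Density of compactly supported divergence-free smooth fields in $V(\Omega_{\gamma^*})$ (valid on the Lipschitz domain $\Omega_{\gamma^*}$) and density of $C_c^\infty(\Omega_{\gamma^*})$ in $H^1_0(\Omega_{\gamma^*})$ extend the identities to all admissible test functions, so uniqueness in Theorem \ref{Thm:weak-EU} pins down $(\vv^*,T^*)$ as the weak solution on $\Omega_{\gamma^*}$.

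It remains to pass to the limit in $J$. Weak $H^1$-lower-semicontinuity controls $\frac{\lambda_1}{2}\|\gamma_n'\|_{L^2(0,1)}^2$, and uniform convergence handles the volume penalty $\frac{\lambda_2}{2}\bigl(\int_0^1\gamma_n\bigr)^2$. The tracking term is recast as $\int_{\Omega^{\mathrm{M}}}\chi_{\Omega_{\gamma_n}}(T_n-I(T_n,\gamma_n))^2\,dx$, and strong $L^2$-convergence of $T_n$ combined with $L^1$-convergence of $\chi_{\Omega_{\gamma_n}}$ and $|\Omega_{\gamma_n}|\to|\Omega_{\gamma^*}|$ yield convergence to $\|T^*-I(T^*,\gamma^*)\|_{L^2(\Omega_{\gamma^*})}^2$. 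Thus $J(T^*,\gamma^*)\le \liminf_n J(T_n,\gamma_n)=\inf_{\Omega_\gamma\in\mathcal{O}} J(T,\gamma)$, proving optimality of $\Omega_{\gamma^*}$. The chief obstacle is the moving-domain nature of the state constraint: I must transport all quantities to the fixed cube $\Omega^{\mathrm{M}}$, verify that the weak limits genuinely live on $\Omega_{\gamma^*}$ rather than on a strictly larger set, and justify that divergence-free tests compactly supported in $\Omega_{\gamma^*}$ are admissible both in every approximating problem and in the limiting one.
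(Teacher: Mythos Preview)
Your proof is correct and follows the same direct-method strategy as the paper: extend the states by zero to the ambient cube, extract weak $H^1$ limits using the uniform bound of Lemma~\ref{Lem:UniformBound}, pass to the limit in the weak formulations against compactly supported test functions, and conclude by lower semicontinuity of $J$. Your use of Arzel\`a--Ascoli for the convergence $\gamma_n\to\gamma^*$ (exploiting the uniform Lipschitz bound directly) is a mild and natural simplification over the paper's appeal to Hausdorff complementary convergence of the domains, and your treatment of the trilinear terms via integration by parts and strong $L^4$ convergence is an equivalent alternative to the paper's strong--weak pairing argument.
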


\begin{proof}
Since the class of admissible domains $\mathcal{O}$ is non-empty and $J(T,\gamma)\geq 0$ for every $\Omega_\gamma\in \mathcal{O}$, we deduce the existence of a sequence $\{(T_n,\gamma_n)\}$ such that
\begin{equation}\label{eq:InfSeq}
\lim_{n\to\infty}J(T_n,\gamma_n)=\inf_{\Omega_\gamma\in\mathcal{O}}J(T,\gamma)\geq 0,
\end{equation}
where $T_n=\hat{T}_n+T_d$, and $(\vv_n,T_n)$ is the weak solution to \eqref{v-edp}-\eqref{T-Walls} in $\Omega_{\gamma_n}$. 

Let $({\vv}_n^{\mathrm{M}},\hat{T}_n^{\mathrm{M}})$ be the extension by zero of $(\vv_n,\hat{T}_n)$ outside $\Omega_{\gamma_n}$. Since $(\vv_n,\hat{T}_n)\in H_0^1(\Omega_{\gamma_n};\mathbb{R}^2)\times H_0^1(\Omega_{\gamma_n})$, we observe that $(\vv_n^{\mathrm{M}},\hat{T}_n^{\mathrm{M}})\in H_0^1(\Omega^{\mathrm{M}};\mathbb{R}^2)\times H_0^1(\Omega^{\mathrm{M}})$. In addition,
\begin{equation}\label{eq:uHatvHat-bound}
\|\hat{T}_n^{\mathrm{M}}\|_{H^1(\Omega^\mathrm{M})}+\|\vv^{\mathrm{M}}_n\|_{H^1(\Omega^{\mathrm{M}})}\leq \|\hat{T}_n\|_{H^1(\Omega_{\gamma_n})}+\|\vv_n\|_{H^1(\Omega_{\gamma_n})}.
\end{equation}
Then, by Lemma \ref{Lem:UniformBound}, we get that $\{(\vv_n^{\mathrm{M}},\hat{T}_n^{\mathrm{M}})\}$ is bounded in $H^1(\Omega^{\mathrm{M}};\mathbb{R}^2)\times H^1(\Omega^\mathrm{M})$, and therefore it converges weakly to some $(\vv^{\mathrm{M}},\hat{T}^{\mathrm{M}})\in H^1(\Omega^\mathrm{M};\mathbb{R}^2)\times H^1(\Omega^\mathrm{M})$ along a subsequence. We denote the subsequence also by $\{(\vv^{\mathrm{M}}_n,\hat{T}_n^{\mathrm{M}})\}$, so that
\begin{equation}\label{eq:weak}
\vv^{\mathrm{M}}_n\rightharpoonup\vv^{\mathrm{M}}\quad\text{ in }H^1(\Omega^\mathrm{M};\mathbb{R}^2)\qquad\text{and}\qquad
\hat{T}_n^{\mathrm{M}}\rightharpoonup\hat{T}^{\mathrm{M}}\quad\text{ in }H^1(\Omega^\mathrm{M}).
\end{equation} 

We further notice that there exists $\Omega^*\subset\Omega^\mathrm{M}$ such that $\{\Omega_{\gamma_n}\}$ converges to $\Omega^*$ along a subsequence (again indexed by $n$) in the Hausdorff complementary metric, see \cite[Proposition A3.2, p. 461; and Theorem A3.9, p. 466]{NeSpTi2006}. Moreover, $\chi_{\Omega_{\gamma_n}}\to\chi_{\Omega^*}$ pointwise a.e. in $\Omega^\mathrm{M}$ along a subsequence (again indexed by $n$), see \cite[Theorem A3.2, p.461]{NeSpTi2006} (see also \cite[p.54]{NeSpTi2006}). From this, we deduce that
\begin{equation*}
\Omega_{\gamma^*}=\{(x_1,x_2)\in\mathbb{R}^2\,:\,0<x_1<1   
 ,\,\gamma^\ast(x_1)<x_2<1\},
\end{equation*}                             
where $\gamma^*$ is the a.e. pointwise limit of $\{\gamma_n\}$ in $[0,1]$. Notice that $\gamma^*\leq 1-\nu$ a.e. in $\Omega$. We further observe that $\{\gamma_n\}$ is bounded in $H_0^1(0,1)$ and therefore it converges weakly to some function $\gamma_*\in H_0^1(0,1)$. In particular, this implies that $\gamma_n\rightharpoonup\gamma_*$ in $L^2(0,1)$ and hence 
\begin{equation}\label{eq:gammaConv}
\int_0^1\gamma_n\varphi\,d\xi\to\int_0^1\gamma_*\varphi\,d\xi\qquad\forall\,\varphi\in L^2(0,1).
\end{equation}
Using the Dominated Convergence Theorem, we deduce that \eqref{eq:gammaConv} also holds true if we replace $\gamma_*$ by $\gamma^*$. Thus $\gamma_n\rightharpoonup\gamma_*$ in $L^2(0,1)$ and therefore $\gamma_*=\gamma^*$. Now we observe that the set $A=\{\gamma\in H_0^1(0,1)\,:\,|\gamma'|\leq L\,\text{ a.e. in }(0,1)\}$ is convex and closed, so it is also weakly closed. Hence, since $\gamma_n\rightharpoonup\gamma^*$ in $H_0^1(0,1)$, we get that $\gamma^*\in A$. Therefore, $\Omega^*=\Omega_{\gamma^*}\in\mathcal{O}$.

Let $\varphi\in C_c^\infty(\Omega_{\gamma^*})$. Since $\Omega_{\gamma_n}\to\Omega_{\gamma^*}$ in the Hausdorff complementary metric, we observe that there exists $n^\ast$ such that $\overline{\mathrm{supp}\,\varphi}\subset\Omega_{\gamma_n}$ for every $n\geq n^\ast$; see \cite[Proposition A3.8, p.465]{NeSpTi2006}. Then $\varphi\in C_c^\infty(\Omega_{\gamma_n})$ for every $n\geq n^\ast$ and so
\begin{equation}\label{eq:Basic-Tn}
b_{2,\Omega_{\gamma_n}}(\vv_n,T_n,\varphi)+\frac{1}{\Rey\Pra}(\nabla T_n,\nabla\varphi)_{2,\Omega_{\gamma_n}}=(g_2,\varphi)_{2,\Omega_{\gamma_n}}\qquad\forall\,n\geq n^\ast.
\end{equation}
Let $(\vv^\ast,\hat{T}^\ast)\in H^1(\Omega_{\gamma^*};\mathbb{R}^2)\times H^1(\Omega_{\gamma^*})$ be the restriction of $(\vv^{\mathrm{M}},\hat{T}^{\mathrm{M}})$ to $\Omega_{\gamma^*}$. Since $\vv^{\mathrm{M}}_n\rightharpoonup\vv^{\mathrm{M}}$ in $H^1(\Omega^\mathrm{M};\mathbb{R}^2)$ and the embedding $H^1(\Omega^\mathrm{M})\hookrightarrow L^2(\Omega^\mathrm{M})$ is compact, we have that 
$\vv^{\mathrm{M}}_n\to\vv^{\mathrm{M}}$ in $L^2(\Omega^\mathrm{M};\mathbb{R}^2)$. In addition, using the Dominated Convergence Theorem, we deduce that $\chi_{\Omega_{\gamma_n}}\to\chi_{\Omega^*}$ in $L^p(\Omega^{\mathrm{M}})$ for any $1\leq p<\infty$. In particular, $\chi_{\Omega_{\gamma_n}}\to\chi_{\Omega^*}$ in $L^4(\Omega^{\mathrm{M}})$ and hence $\vv_n^{\mathrm{M}}\chi_{\Omega_{\gamma_n}}\to\vv_n^{\mathrm{M}}\chi_{\Omega^*}$ in $L^2(\Omega^{\mathrm{M}};\mathbb{R}^2)$. We finally notice that if $\varphi^{\mathrm{M}}$ denotes the extension by zero of $\varphi$ outside $\Omega_{\gamma^*}$, then $\varphi^{\mathrm{M}}\in L^\infty(\Omega^{\mathrm{M}})$ and therefore $\vv^{\mathrm{M}}_n\varphi^{\mathrm{M}}\chi_{\Omega_{\gamma_n}}\to\vv^{\mathrm{M}}\varphi^{\mathrm{M}}\chi_{\Omega^*}$ in $L^2(\Omega^\mathrm{M};\mathbb{R}^2)$. Also, from the weak convergence $T^{\mathrm{M}}_n\rightharpoonup T^{\mathrm{M}}$ in $H^1(\Omega^\mathrm{M})$ we find that $\nabla T^{\mathrm{M}}_n\rightharpoonup\nabla T^{\mathrm{M}}$ in $L^2(\Omega^\mathrm{M};\mathbb{R}^2)$. Then,
\begin{align*}
&\lim_{n\to\infty}b_{2,\Omega_{\gamma_n}}(\vv_n,T_n,\varphi)=
\lim_{n\to\infty}\int_{\Omega_{\gamma_n}}(\vv_n\cdot\nabla T_n)\varphi\,dx=
\lim_{n\to\infty}\int_{\Omega^\mathrm{M}}(\vv^{\mathrm{M}}_n\varphi^{\mathrm{M}}\chi_{\Omega_{\gamma_n}})\cdot\nabla T^{\mathrm{M}}_n\,dx\\
&\qquad=\int_{\Omega^{\mathrm{M}}}(\vv^{\mathrm{M}}\varphi^{\mathrm{M}}\chi_{\Omega^*})\cdot\nabla T^\mathrm{M}\,dx=\int_{\Omega_{\gamma^*}}(\vv^\ast\cdot\nabla T^\ast)\varphi\,dx=b_{2,\Omega_{\gamma^*}}(\vv^\ast,T^\ast,\varphi),
\end{align*}
where $T^*=\hat{T}^*+T_d$. In a similar way, we obtain that 
\begin{align*}
\lim_{n\to\infty}(\nabla T_n,\nabla\varphi_n)_{2,\Omega_{\gamma_n}}=(\nabla T^\ast,\nabla\varphi)_{2,\Omega_{\gamma^*}}.
\end{align*}
In fact, from the weak convergence $\hat{T}_n^{\mathrm{M}}\rightharpoonup\hat{T}^{\mathrm{M}}$ in $H^1(\Omega^\mathrm{M})$ we have that $\nabla\hat{T}_n^{\mathrm{M}}\rightharpoonup\nabla\hat{T}^\mathrm{M}$ in $L^2(\Omega^\mathrm{M};\mathbb{R}^2)$. Also, by the Dominated Convergence Theorem, we deduce that $\chi_{\Omega_{\gamma_n}}\nabla\varphi^{\mathrm{M}}\to\chi_{\Omega_{\gamma^*}}\nabla\varphi^{\mathrm{M}}$ in $L^2(\Omega^\mathrm{M};\mathbb{R}^2)$. Then, as before, we obtain the desired convergence. Finally, using the Dominated Convergence Theorem once again, we deduce that
\begin{align*}
\lim_{n\to\infty}(g_2,\varphi)_{2,\Omega_{\gamma_n}}=\,&
\lim_{n\to\infty}\int_{\Omega_{\gamma_n}}g_2\varphi\,dx=
\lim_{n\to\infty}\int_{\Omega^\mathrm{M}}g_2\varphi^{\mathrm{M}}\chi_{\Omega_{\gamma_n}}\,dx\\
=\,&\int_{\Omega^{\mathrm{M}}}g_2\varphi^{\mathrm{M}}\chi_{\Omega^*}\,dx=\int_{\Omega_{\gamma^*}}g_2\varphi\,dx=(g_2,\varphi)_{2,\Omega_{\gamma^*}}.
\end{align*}
Then, taking the limit as $n\to\infty$ in \eqref{eq:Basic-Tn}, we obtain
\begin{equation}\label{eq:Basic-T*}
b_{2,\Omega_{\gamma^*}}(\vv^\ast,T^\ast,\varphi)+\frac{1}{\Rey\Pra}(\nabla T^\ast,\nabla\varphi)_{2,\Omega_{\gamma^*}}=(g_2,\varphi)_{2,\Omega_{\gamma^*}}.
\end{equation}
By a density argument, we deduce that \eqref{eq:Basic-T*} also holds for every $\varphi\in H^1_0(\Omega_{\gamma^*})$. In a similar way we obtain 
\begin{equation}\label{eq:Basic-v*}
b_{1,\Omega_{\gamma^*}}(\vv^\ast,\vv^\ast,{\vvarphi})+\frac{1}{\Rey}(\nabla \vv^\ast,\nabla{\boldsymbol\varphi})_{2,\Omega_{\gamma^*}}-\frac{\Gra}{\Rey^2}(T^*\mathbf{e},\vvarphi)_2=(\boldg_1,\vvarphi)_{2,\Omega_{\gamma^*}},
\end{equation}
for every $\vvarphi\in V(\Omega_{\gamma^*};\mathbb{R}^2)$.

We shall now prove that $T^\ast=T_d$ on $\partial\Omega_{\gamma^\ast}$, $\vv^\ast=0$ on $\partial\Omega_{\gamma^*}$, and that $\diver\vv^\ast=0$ a.e. in $\Omega_{\gamma^*}$. This, together with \eqref{eq:Basic-T*} and \eqref{eq:Basic-v*}, will give us that $(\vv^\ast,T^\ast)$ is the weak solution to \eqref{v-edp}-\eqref{T-Walls} in $\Omega_{\gamma^*}$. 

Given $\Omega_\gamma\in\mathcal{O}$, we define $\Gamma=\partial\Omega_\gamma\setminus\Gamma_{\gamma}$. Notice that $\Gamma$ does not depend on $\gamma$ and $\Gamma\subset\partial\Omega^{\mathrm{M}}$. In particular, this yields that $T_n=0$ on $\Gamma$ for every $n$. From this, and the compact embedding $H^1(\Omega^\mathrm{M})\hookrightarrow L^2(\Gamma)$, we get that $T^*=0$ on $\Gamma$. We shall now prove that $\hat{T}^*=0$ on $\Gamma_{\gamma^*}$, which will give us that $T^*=T_d$ on $\partial\Omega_{\gamma^*}$. Exploiting again the compact embedding $H^1(\Omega^\mathrm{M})\hookrightarrow L^2(\Gamma)$, we get  that $\hat{T}_n^{\mathrm{M}}\to\hat{T}^{\mathrm{M}}$ in $L^2(\Omega^{\mathrm{M}})$ since $\hat{T}_n^{\mathrm{M}}\rightharpoonup\hat{T}^{\mathrm{M}}$ in $H^1(\Omega^{\mathrm{M}})$. Consider an open set $O$ such that $\bar{O}\subset\Omega^{\mathrm{M}}\setminus\Omega_{\gamma^*}$. Similarly as before, from the convergence $\Omega_{\gamma_n}\to\Omega_{\gamma^*}$ in the Hausdorff complementary metric, we get that there exists $n^*$ such that $\bar{O}\subset\Omega^{\mathrm{M}}\setminus\Omega_{\gamma_n}$ for every $n\geq n^*$; see \cite[Proposition A3.8, p.465]{NeSpTi2006}. Thus, $\hat{T}_n^{\mathrm{M}}=0$ a.e. in $\bar{O}$ and therefore $\hat{T}^{\mathrm{M}}=0$ in $\bar{O}$. Given that $O$ is arbitrary, we deduce that $\hat{T}^{\mathrm{M}}=0$ in $\Omega^{\mathrm{M}}\setminus\Omega_{\gamma^*}$. Therefore, $\hat{T}^*=0$ on $\partial\Omega_{\gamma^*}$. The proof that $\vv^*=0$ on $\partial\Omega_{\gamma^*}$ is obtained in a similar way. We shall finally prove that $\diver\vv^\ast=0$ a.e. in $\Omega_{\gamma^*}$, by deducing that 
\begin{equation*}
\int_{\Omega_{\gamma^*}}\varphi\,\diver\vv^\ast\,dx=0\qquad\forall\,\varphi\in C_c^\infty(\Omega_{\gamma^*}).
\end{equation*}
Let $\varphi\in C_c^\infty(\Omega_{\gamma^*})$ and let $\varphi^{\mathrm{M}}$ be its extension by zero outside $\Omega_{\gamma^*}$. Similarly as before, we notice that there exists $n^\ast$ such that $\overline{\supp\varphi}\subset\Omega_{\gamma_n}$ for every $n\geq n^\ast$; see \cite[Proposition A3.8, p.465]{NeSpTi2006} and hence the restriction of $\varphi$ to $\Omega_{\gamma_n}$ makes sense for every $n\geq n^*$. Since
\begin{equation*}
0=\int_{\Omega_{\gamma_n}}\varphi\,\diver\vv_n\,dx=
\int_{\Omega^\mathrm{M}}\varphi^{\mathrm{M}}\,\diver \vv_n^{\mathrm{M}}\chi_{\Omega_{\gamma_n}}\,dx\qquad\forall\,n\geq n^*,
\end{equation*}
it is enough to show that
\begin{equation*}
\lim_{n\to\infty}\int_{\Omega^\mathrm{M}}\varphi^{\mathrm{M}}\,\diver\vv^{\mathrm{M}}_n\chi_{\Omega_{\gamma_n}}\,dx= \int_{\Omega^*}\varphi\,\diver\vv^*\,dx.
\end{equation*}
By the Dominated Convergence Theorem, we deduce that $\varphi^{\mathrm{M}}\chi_{\Omega_{\gamma_n}}\to\varphi^{\mathrm{M}}\chi_{\Omega^*}$ in $L^2(\Omega^{\mathrm{M}})$. Also, since $\vv_n^{\mathrm{M}}\rightharpoonup \vv^{\mathrm{M}}$ in $H^1(\Omega^{\mathrm{M}};\mathbb{R}^2)$, we observe that $\diver \vv_n^{\mathrm{M}}\rightharpoonup\diver \vv^{\mathrm{M}}$ in $L^2(\Omega^{\mathrm{M}})$, and therefore we obtain the desired convergence.

Finally, the above convergences for $\{T_n\}$, $\{\Omega_{\gamma_n}\}$, and $\{\gamma_n\}$ imply that $\Omega_{\gamma^*}$ is an optimal domain by \eqref{eq:InfSeq} and
\begin{equation}\label{eq:LowerSemicont}
J(T^\ast,\gamma^*)\leq\liminf_{n\to\infty}J(T_n,\gamma_n).
\end{equation}
\end{proof}

\section{The formal adjoint system and its regularity}\label{Sect:Adjoint}
This section is devoted to the obtention of an adjoint system and the study of its regularity properties, specially at the boundary. Hereafter, we consider $\boldg_1\in L^2(\Omega^{\mathrm{M}};\mathbb{R}^2)$, $g_2\in L^2(\Omega^{\mathrm{M}})$, and $T_d\in H^1(\Omega^{\mathrm{M}})\cap C_c(\Omega^{\mathrm{M}})$. Also, we assume that $\Rey$, $\Pra$, $\Gra$, $\boldg_1$, $g_2$, and $T_d$ are under the assumptions of Theorem \ref{Thm:weak-EU}. For convenience, we rewrite \eqref{shape} as
\begin{align*}
&\min{J(\vv,\hat{T},\gamma)}\quad\text{ over }\quad(\vv,\hat{T},\gamma)\in V(\Omega_\gamma)\times H_0^1(\Omega_{\gamma})\times H_0^1(0,1),\\
&\text{subject to }\quad e(\vv,\hat{T},\gamma)=0\quad \text{and}\quad\gamma\in H_{\nu,L}.
\end{align*}
Here, 
\begin{align}\label{eq:H}
H_{\nu,L}:=\{\gamma\in H_0^1(0,1)\,:\,\gamma\leq 1-\nu\text{ a.e. in }(0,1)\text{ and }|\gamma'|\leq L\text{ a.e. in }(0,1)\}.
\end{align}
Recall that the objective functional $J$ is defined by 
\begin{align}
\label{J-Def-bis}J(\vv,\hat{T},\gamma)=\,&\|T-I(T,\gamma)\|^2_{L^2(\Omega_\gamma)}+
\frac{\lambda_1}{2}\|\gamma\,'\|^2_{L^2(0,1)}+\frac{\lambda_2}{2}\left(\int_0^1\gamma(\xi)\,d\xi\right)^2,
\end{align}
where $T=\hat{T}+T_d$. Finally, the map $e:V(\Omega_\gamma)\times H_0^1(\Omega_{\gamma})\times H_0^1(0,1)\to V(\Omega_\gamma)'\times H^{-1}(\Omega_{\gamma})$ is given by
\begin{equation*}
\langle e(\vv,\hat{T},\gamma),(\vvarphi,\varphi)\rangle=
\langle e_1(\vv,\hat{T},\gamma),\vvarphi\rangle+
\langle e_2(\vv,\hat{T},\gamma),\varphi\rangle,
\end{equation*}
where 
\begin{align*}
\langle e_1(\vv,\hat{T},\gamma),\vvarphi\rangle:=\,&\int_{\Omega_\gamma}(\vv\cdot\nabla\vv)\cdot\vvarphi\,dx+\frac{1}{\Rey}\int_{\Omega_\gamma}\nabla\vv\cdot\nabla\vvarphi\,dx-\frac{\Gra}{\Rey^2}\int_{\Omega_\gamma}T\mathbf{e}\cdot\vvarphi\,dx\\
&-\int_{\Omega_\gamma}\boldg_1\cdot\vvarphi\,dx,\\
\langle e_2(\vv,\hat{T},\gamma),\varphi\rangle:=\,&\int_{\Omega_\gamma}(\vv\cdot\nabla T)\varphi\,dx+\frac{1}{\Rey\Pra}\int_{\Omega_\gamma}\nabla T\cdot\nabla\varphi\,dx-\int_{\Omega\gamma}g_2\varphi\,dx.
\end{align*}

Consider the reduced objective functional $\hat{J}:H_0^1(0,1)\to \mathbb{R}$, defined by 
\begin{equation*}
\hat{J}(\gamma)=J(\vv(\gamma),\hat{T}(\gamma),\gamma),
\end{equation*}
where $e(\vv(\gamma),\hat{T}(\gamma),\gamma)=0$. We now proceed formally for the derivation of the adjoint system; this step is later made rigorous on the next section. Suppose that the map $\gamma\mapsto \hat{J}(\gamma)$ is differentiable and that we can follow the adjoint state formalism (see \cite{HiPiUlUl2008} for example): That is that $\hat{J}'(\gamma)$ admits the representation 
\begin{align}\label{eq:AbstractJDerivative}
\hat{J}'(\gamma)=\,&
e_\gamma(\vv(\gamma),\hat{T}(\gamma),\gamma)^*(\ww,S)+J_\gamma(\vv(\gamma),\hat{T}(\gamma),\gamma),
\end{align}
where $(\ww,S)$ satisfies the adjoint equation
\begin{align}\label{eq:AbstractAdjointEq}
A(\vv(\gamma),\hat{T}(\gamma))^{*}(\ww,S)=
-(J_{\vv}(\vv(\gamma),\hat{T}(\gamma),\gamma),
J_{\hat{T}}(\vv(\gamma),\hat{T}(\gamma),\gamma)),
\end{align}
and 
\begin{align*}
A(\vv(\gamma),\hat{T}(\gamma)):=(e_{\vv}(\vv(\gamma),\hat{T}(\gamma),\gamma),
e_{\hat{T}}(\vv(\gamma),\hat{T}(\gamma),\gamma)).
\end{align*}
The next two lemmae address structure and properties of this adjoint system. More precisely, Lemma \ref{Lem:AdjointSystem} and Lemma \ref{Lem:Adjoint} establish basic existence of adjoint equation together with uniqueness and regularity. 
\begin{lemma}\label{Lem:AdjointSystem}
Let $\Omega_\gamma\in\mathcal{O}$. Consider $\boldg_1\in L^2(\Omega^{\mathrm{M}};\mathbb{R}^2)$, $g_2\in L^2(\Omega^{\mathrm{M}})$, and $T_d\in H^1(\Omega^{\mathrm{M}})\cap C_c(\Omega^{\mathrm{M}})$. Assume that $\Rey$, $\Pra$, $\Gra$ satisfy \eqref{cond:RePrGr} and $\boldg_1$, $g_2$, $T_d$ satisfy the smallness condition in Theorem \ref{Thm:weak-EU}. Then the  adjoint system associated to \eqref{v-edp}-\eqref{T-Walls} in $\Omega_\gamma$ defined by \eqref{eq:AbstractAdjointEq} is given by
\begin{align}
\label{w-edp}-\vv\cdot\nabla \ww+\ww\cdot\nabla\vv +S\,\nabla T-\frac{1}{\Rey}\Delta \ww+\nabla q=\,&0&\text{in}&\quad\Omega_\gamma,&\\
\label{w-DivFree}\nabla\cdot\ww=\,&0&\text{in}&\quad\Omega_\gamma,&\\
\label{w-NoSlip}\ww=\,&0&\quad\text{on}&\quad\partial\Omega_\gamma,
\end{align}
and 
\begin{align}
\label{S-edp}-\vv\cdot\nabla S -\frac{1}{\Rey\Pra}\Delta S-\frac{\Gra}{\Rey^2}\ww\cdot{\bf e}=\,&T-I(T,\gamma)&\text{in}&\quad\Omega_\gamma,&\\
\label{S-Dirichlet}S=\,&0&\text{on}&\quad\partial\Omega_{\gamma},&
\end{align}
where $(\vv,T)$ is the weak solution to \eqref{v-edp}-\eqref{T-Walls} in $\Omega_\gamma$ given by Theorem \ref{Thm:weak-EU}. 
\end{lemma}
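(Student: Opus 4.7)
My plan is to simply unpack the abstract adjoint identity \eqref{eq:AbstractAdjointEq} at the state $(\vv(\gamma),\hat T(\gamma),\gamma)$ and recognize the result as the weak form of \eqref{w-edp}--\eqref{S-Dirichlet}. This amounts to three routine steps: computing the partial Fréchet derivatives of $e=(e_1,e_2)$ and of $J$, transferring derivatives off the test functions via integration by parts, and recovering the adjoint pressure $q$ from the divergence-free constraint.

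First, I would differentiate $e_1,e_2$ term by term at $(\vv,\hat T,\gamma)$, using $T=\hat T+T_d$ so that derivatives in $\hat T$ and in $T$ agree. The contribution of $e_{\vv}\,\tilde\vv$ (for $\tilde\vv\in V(\Omega_\gamma)$) is the sum of the two linearized convective terms $(\tilde\vv\cdot\nabla\vv)$ and $(\vv\cdot\nabla\tilde\vv)$, the viscous form $\Rey^{-1}(\nabla\tilde\vv,\cdot)_2$, and the convective temperature term $\int(\tilde\vv\cdot\nabla T)\varphi$; the contribution of $e_{\hat T}\,\tilde T$ collects the buoyancy term $-(\Gra/\Rey^2)(\tilde T\mathbf{e},\cdot)_2$, the transport $\int(\vv\cdot\nabla\tilde T)\varphi$, and the diffusion $(\Rey\Pra)^{-1}(\nabla\tilde T,\nabla\varphi)_2$. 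For $J$ one obtains $J_{\vv}=0$ and $\langle J_{\hat T},\tilde T\rangle=2\int_{\Omega_\gamma}(T-I(T,\gamma))\tilde T\,dx$, where the mean contribution of $I(\tilde T,\gamma)$ cancels thanks to $\int_{\Omega_\gamma}(T-I(T,\gamma))\,dx=0$.

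Second, I would test the identity $\langle(\ww,S),A(\tilde\vv,\tilde T)\rangle=-\langle(J_{\vv},J_{\hat T}),(\tilde\vv,\tilde T)\rangle$ with arbitrary $(\tilde\vv,\tilde T)\in V(\Omega_\gamma)\times H_0^1(\Omega_\gamma)$ and transfer the derivatives onto $(\ww,S)$. The key identities, all using $\diver\vv=0$ together with the fact that $\vv,\tilde\vv,\ww,\tilde T,S$ vanish on $\partial\Omega_\gamma$, are
\begin{align*}
\int_{\Omega_\gamma}(\vv\!\cdot\!\nabla\tilde\vv)\!\cdot\!\ww\,dx &= -\int_{\Omega_\gamma}(\vv\!\cdot\!\nabla\ww)\!\cdot\!\tilde\vv\,dx,\\
\int_{\Omega_\gamma}(\vv\!\cdot\!\nabla\tilde T)\,S\,dx &= -\int_{\Omega_\gamma}(\vv\!\cdot\!\nabla S)\,\tilde T\,dx,\\
\int_{\Omega_\gamma}\nabla\tilde\vv\!\cdot\!\nabla\ww\,dx &= -\int_{\Omega_\gamma}\Delta\ww\!\cdot\!\tilde\vv\,dx,
\end{align*}
with the analogue for $\nabla S$. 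The remaining non-transport term $\int(\tilde\vv\cdot\nabla\vv)\cdot\ww$ is rewritten pointwise as $\int((\nabla\vv)^{\!\top}\ww)\cdot\tilde\vv$, which is exactly the object denoted by $\ww\cdot\nabla\vv$ in \eqref{w-edp}; likewise $(\tilde\vv\cdot\nabla T)S=(S\nabla T)\cdot\tilde\vv$ pointwise.

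Third, I would separate the resulting variational identity into its scalar ($\tilde T$-) part and its vectorial ($\tilde\vv$-) part. The scalar part holds for every $\tilde T\in H_0^1(\Omega_\gamma)$ and yields \eqref{S-edp}--\eqref{S-Dirichlet}, where the Dirichlet condition is built into the test space and the factor $2$ coming from $J_{\hat T}$ is absorbed into the normalization of $(\ww,S)$ to match the right-hand side $T-I(T,\gamma)$. The vectorial part is a distributional identity of the form $\langle F(\ww,S,\vv,T),\tilde\vv\rangle=0$ for every $\tilde\vv\in V(\Omega_\gamma)$. The main obstacle, and essentially the only non-bookkeeping step, is the recovery of the adjoint pressure: since this identity is tested only on divergence-free fields, de Rham's theorem on the Lipschitz domain $\Omega_\gamma$ provides $q\in L^2(\Omega_\gamma)$, unique up to an additive constant, such that the strong system \eqref{w-edp}--\eqref{w-NoSlip} holds, completing the identification of the adjoint system.
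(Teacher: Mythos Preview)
Your proposal is correct and follows essentially the same route as the paper's proof: compute the partial Fr\'echet derivatives of $e_1,e_2,J$, assemble $\langle A(\tilde\vv,\tilde T),(\ww,S)\rangle$, apply the antisymmetry identities $b_1(\vv,\cdot,\cdot)=-b_1(\vv,\cdot,\cdot)^{\!\top}$ and $b_2(\vv,\cdot,\cdot)=-b_2(\vv,\cdot,\cdot)^{\!\top}$ (valid because $\diver\vv=0$ and all factors vanish on $\partial\Omega_\gamma$), and identify the result as the weak form of \eqref{w-edp}--\eqref{S-Dirichlet}. Your explicit appeal to de Rham's theorem to recover $q$ and your remark on the factor~$2$ in $J_{\hat T}$ are small clarifications; the paper simply states that the variational identity ``is just the weak form'' of the displayed system and silently drops the factor~$2$.
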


\begin{proof}Allthrough the proof, $(\vv,\hat{T})$ denotes a generic element of $V(\Omega_\gamma)\times H_0^1(\Omega_\gamma)$ and $(\vv(\gamma),T(\gamma))$ refers to the weak solution to \eqref{v-edp}-\eqref{T-Walls}, where $T(\gamma)=\hat{T}(\gamma)-T_d$. We first compute the right-hand side of \eqref{eq:AbstractAdjointEq}. Since $J$ does not depend on $\vv$, we notice that $J_{\vv}=0$. Also, by the chain rule and standard calculations, we find that
\begin{align*}
\langle J_{\hat{T}}(\vv,\hat{T},\gamma),\varphi\rangle=\,&\int_{\Omega_\gamma}\left(T-I(T,\gamma)\right)\left(\varphi-I(\varphi,\gamma)\right)\,dx=\int_{\Omega_\gamma}\left(T-I(T,\gamma)\right)\varphi\,dx.
\end{align*}

We now focus on the computation of the differential operator $A$. For this, we observe that
\begin{align*}
\langle e_{1,\vv}(\vv,\hat{T},\gamma)\vvarphi,\ww)\rangle=\,&
\frac{1}{\Rey}\int_{\Omega_\gamma}\nabla\vvarphi\cdot\nabla\ww\,dx+
\int_{\Omega_\gamma}(\vv\cdot\nabla\vvarphi)\cdot\ww\,dx+
\int_{\Omega_\gamma}(\vvarphi\cdot\nabla\vv)\cdot\ww\,dx,\\[0.15cm]
\langle e_{1,\hat{T}}(\vv,\hat{T},\gamma)\varphi,\ww)\rangle=\,&
-\frac{\Gra}{\Rey^2}\int_{\Omega_\gamma}\varphi\,{\mathbf e}\cdot\ww\,dx,\\[0.15cm]
\langle e_{2,\vv}(\vv,\hat{T},\gamma)\vvarphi,S\rangle=\,&
\int_{\Omega_\gamma}(\vvarphi\cdot\nabla\hat{T})S\,dx+\int_{\Omega_\gamma}(\vvarphi\cdot\nabla T_d)S\,dx,\\[0.15cm]
\langle e_{2,\hat{T}}(\vv,\hat{T},\gamma)\varphi,S\rangle=\,&
\int_{\Omega_\gamma}(\vv\cdot\nabla \varphi) S\,dx+\frac{1}{\Rey\Pra}\int_{\Omega_\gamma}\nabla \varphi\cdot\nabla S\,dx.
\end{align*}

Then,
\begin{align*}
&\langle A(\vv(\gamma),\hat{T}(\gamma))(\vvarphi,\varphi),(\ww,S)\rangle=
b_1(\vv(\gamma),\vvarphi,\ww)+
b_1(\vvarphi,\vv(\gamma),\ww)+b_2(\vvarphi,\hat{T}(\gamma),S)\\
&\quad+b_2(\vvarphi,T_d,S)+\frac{1}{\Rey}(\nabla\ww,\nabla\vvarphi)_2+b_2(\vv(\gamma),\varphi,S)+\frac{1}{\Rey\Pra}(\nabla S,\nabla \varphi)_2
-\frac{\Gra}{\Rey^2}(\ww,\varphi\,{\mathbf e})_2.
\end{align*}

Since $\vv(\gamma)$ is divergence-free in $\Omega_\gamma$, and $\vv(\gamma),\vvarphi,\ww\in H^1(\Omega_\gamma;\mathbb{R}^2)$, we notice that $b_1(\vv(\gamma),\vvarphi,\ww)=-b_1(\vv(\gamma),\ww,\vvarphi)$; see \cite[Lemma IX.2.1, p. 591]{Ga2011}. Also, $b_1(\vvarphi,\vv(\gamma),\ww)=b_1(\ww,\vv(\gamma),\vvarphi)$. Similarly, $b_2(\vv(\gamma),\varphi,S)=-b_2(\vv(\gamma),S,\varphi)$.
Then,
\begin{align*}
&\langle A(\vv(\gamma),\hat{T}(\gamma))(\vvarphi,\varphi),(\ww,S)\rangle=
-b_1(\vv(\gamma),\ww,\vvarphi)+
b_1(\ww,\vv(\gamma),\vvarphi)+
b_2(\vvarphi,T(\gamma),S)\\
&\quad+
\frac{1}{\Rey}(\nabla\ww,\nabla\vvarphi)_2-b_2(\vv(\gamma),S,\varphi)+\frac{1}{\Rey\Pra}(\nabla S,\nabla \varphi)_2-\frac{\Gra}{\Rey^2}(\ww,\varphi\,{\mathbf e})_2.
\end{align*}
Thus, the weak form of the adjoint system is stated as follows: Find $(\ww,S)\in V(\Omega_\gamma)\times H_0^1(\Omega_\gamma)$ that satisfies
\begin{align}
\label{eq:weak-w}&-b_1(\vv(\gamma),\ww,\vvarphi)+b_1(\ww,\vv(\gamma),\vvarphi)+b_2(\vvarphi,T(\gamma),S)+\frac{1}{\Rey}(\nabla\ww,\nabla\vvarphi)_2=0,&\\
\label{eq:weak-S}&b_2(\vv(\gamma),S,\varphi)-\frac{1}{\Rey\Pra}(\nabla S,\nabla\varphi)_2+\frac{\Gra}{\Rey^2}(\ww,\varphi\,{\mathbf{e}})_2=-\langle J_{\hat{T}}(\vv(\gamma),\hat{T}(\gamma),\gamma),\varphi\rangle,
\end{align}
for every $(\vvarphi,\varphi)\in V(\Omega_\gamma)\times H_0^1(\Omega_\gamma)$. Only remains to observe that this is just the weak form of \eqref{w-edp}-\eqref{S-Dirichlet}.
\end{proof}

We now focus on the existence and uniqueness of a weak solution to the adjoint system \eqref{w-edp}-\eqref{S-Dirichlet}. As before, we shall prove it by a fixed point argument. One key step of the proof concerns the existence of the uniform bound (independent of $\gamma$) for weak solutions to the primal Boussinesq system given by Lemma \ref{Lem:UniformBound}. More precisely, we have the next result.

\begin{lemma}[Existence, uniqueness and boundary regularity of the weak solution to the adjoint system]\label{Lem:Adjoint}
Let $\Omega_\gamma\in\mathcal{O}$, and $\boldg_1\in L^2(\Omega^{\mathrm{M}};\mathbb{R}^2)$, $g_2\in L^2(\Omega^{\mathrm{M}})$, and $T_d\in H^1(\Omega^{\mathrm{M}})\cap C_c(\Omega^{\mathrm{M}})$. Then there exist $\varepsilon_1,\varepsilon_2,\varepsilon_3>0$ such that if
\begin{equation}\label{cond:RePrGr-bis}
\Rey\in(0,\varepsilon_1),\qquad
\Pra\in\left(0,\frac{\varepsilon_2}{\Rey}\right),\qquad
\Gra\in(0,\varepsilon_3\min(\Rey,\Rey^2)),
\end{equation}
and $\boldg_1$, $g_2$, and $T_d$ are sufficiently small, then there exists a unique weak solution $(\ww,S)\in V(\Omega_\gamma)\times H_0^1(\Omega_\gamma)$ to \eqref{w-edp}-\eqref{S-Dirichlet} in $\Omega_\gamma$. Moreover, if in addition $T_d\in H^2(\Omega^{\mathrm{M}})$ and the boundary part $\Gamma_\gamma$ is of class $C^2$, then the weak solution $(\ww,S)\in V(\Omega_\gamma)\times H^1_0(\Omega_\gamma)$ to \eqref{w-edp}-\eqref{S-Dirichlet} satisfies
\begin{align}\label{eq:w-S-bdry-reg}
\ww\big|_{\Gamma_\gamma^\varepsilon}\in H^{3/2}(\Gamma_\gamma^\varepsilon;\mathbb{R}^2),\qquad\qquad
S\big|_{\Gamma_\gamma^\varepsilon}\in H^{3/2}(\Gamma_\gamma^{\varepsilon}),
\end{align}  
for every relative open subset $\Gamma_\gamma^\varepsilon$ of $\partial\Omega$ strictly contained in $\Gamma_\gamma$. 
\end{lemma}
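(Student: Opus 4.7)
The plan is to mirror both the fixed-point argument in the proof of Theorem \ref{Thm:weak-EU} for the existence and uniqueness statements, and the bootstrap argument in the proof of Theorem \ref{Thm:BdryReg} for the boundary regularity statement. A crucial preparatory observation is that the adjoint system is \emph{linear} in $(\ww,S)$ once $(\vv(\gamma),T(\gamma))$ is frozen, so the contraction argument applies to an affine map.

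To set up the contraction, I would rewrite \eqref{eq:weak-w}--\eqref{eq:weak-S} in the equivalent form
\[\frac{1}{\Rey}(\nabla\ww,\nabla\vvarphi)_2 = b_1(\vv(\gamma),\ww,\vvarphi) - b_1(\ww,\vv(\gamma),\vvarphi) - b_2(\vvarphi,T(\gamma),S),\]
\[\frac{1}{\Rey\Pra}(\nabla S,\nabla\varphi)_2 = b_2(\vv(\gamma),S,\varphi) + \frac{\Gra}{\Rey^2}(\ww,\varphi\mathbf{e})_2 + \langle J_{\hat T},\varphi\rangle,\]
and define $G:V(\Omega_\gamma)\times H_0^1(\Omega_\gamma)\to V(\Omega_\gamma)\times H_0^1(\Omega_\gamma)$ by $G(\ww,S)=(R_1^{-1}Q_1(\ww,S),R_2^{-1}Q_2(\ww,S))$, with $R_1,R_2$ as in \eqref{eq:R1R2} and $Q_1,Q_2$ packaging the respective right-hand sides, so that a weak solution to the adjoint system is precisely a fixed point of $G$. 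Standard trilinear estimates for $b_1,b_2$, combined with the uniform bound $\|\vv(\gamma)\|_{V(\Omega_\gamma)}+\|\hat T(\gamma)\|_{H_0^1(\Omega_\gamma)}\leq\tau$ supplied by Lemma \ref{Lem:UniformBound} independently of $\gamma$, allow me to control the Lipschitz constant of the affine map $G$ by an expression of the form $C\bigl(\Rey\tau+\Rey(\tau+\|T_d\|_{H^1})+\Rey\Pra\tau+\Gra/\Rey^2\bigr)$. The appearance of $\Gra/\Rey^2$ (rather than $\Gra/\Rey$ as for the primal system) is precisely what forces the modified smallness requirement \eqref{cond:RePrGr-bis} involving $\min(\Rey,\Rey^2)$; under this condition, together with smallness of $\boldg_1,g_2,T_d$, $G$ becomes a contraction and Banach's fixed-point theorem yields a unique weak solution.

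For the boundary regularity, I would follow the two-step bootstrap of Theorem \ref{Thm:BdryReg}. For $\ww$, I would view \eqref{eq:weak-w} as a Stokes problem $\frac{1}{\Rey}(\nabla\ww,\nabla\vvarphi)_2=\langle\mathbf{G}_1^\ast,\vvarphi\rangle$, where $\mathbf{G}_1^\ast$ packages $\vv\cdot\nabla\ww$, $\ww\cdot\nabla\vv$, and $S\,\nabla T$. Since $\vv,\ww,S\in H^1\hookrightarrow L^q$ for every $q<\infty$ and $\nabla\vv,\nabla\ww,\nabla T\in L^2$, each product factor lies in $L^r(\Omega_\gamma;\mathbb{R}^2)$ for all $1<r<2$. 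An application of \cite[Thm.~IV.5.1, p.~276]{Ga2011} on a subdomain whose $C^2$ boundary contains a neighborhood of $\Gamma_\gamma^\varepsilon$ yields $\ww\in W^{2,r}(\Omega';\mathbb{R}^2)\hookrightarrow L^\infty(\Omega';\mathbb{R}^2)$. A second iteration, now using also that $\vv\in L^\infty$ locally by Theorem \ref{Thm:BdryReg}, promotes $\mathbf{G}_1^\ast$ to $L^2(\Omega';\mathbb{R}^2)$ and delivers $\ww\in H^2(\Omega'';\mathbb{R}^2)$; the trace embedding $H^2(\Omega'')\hookrightarrow H^{3/2}(\partial\Omega'')$ concludes $\ww\big|_{\Gamma_\gamma^\varepsilon}\in H^{3/2}(\Gamma_\gamma^\varepsilon;\mathbb{R}^2)$. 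For $S$, I would introduce a cutoff $\eta\in C^\infty(\overline{\Omega_\gamma})$ with $\eta\equiv 1$ on $\Gamma_\gamma^\varepsilon$ and $\eta\equiv 0$ on $\partial\Omega''\setminus\Gamma_\gamma$, and derive, exactly as for $\eta\hat T$ in the proof of Theorem \ref{Thm:BdryReg}, a Dirichlet problem $-\Delta(\eta S)=G^\ast$ on $\Omega''$ with zero boundary data. Using $T_d\in H^2(\Omega^\mathrm{M})$, $\vv\in L^\infty(\Omega'';\mathbb{R}^2)$ and $T\in H^2(\Omega'')$ from Theorem \ref{Thm:BdryReg}, and the newly established $\ww\in L^\infty(\Omega'';\mathbb{R}^2)$, I find $G^\ast\in L^2(\Omega'')$. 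Classical $H^2$-regularity \cite[Thm.~4, p.~334]{Ev2010} then gives $\eta S\in H^2(\Omega'')$, and the trace embedding yields $S\big|_{\Gamma_\gamma^\varepsilon}\in H^{3/2}(\Gamma_\gamma^\varepsilon)$.

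I expect the main obstacle to lie in pinning down the precise form of the contraction estimate, since the buoyancy coupling in the adjoint $S$-equation enters through the coefficient $\Gra/\Rey^2$ instead of $\Gra/\Rey$. This asymmetry is what dictates the strengthened condition $\Gra\in(0,\varepsilon_3\min(\Rey,\Rey^2))$ relative to \eqref{cond:RePrGr}; all other steps are structurally identical to the corresponding arguments already carried out for the primal Boussinesq system, and the bootstrap regularity step is largely routine once the linear structure of \eqref{w-edp}--\eqref{S-Dirichlet} is combined with the primal regularity of Theorem \ref{Thm:BdryReg}.
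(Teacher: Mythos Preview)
Your proposal is correct and follows the same approach as the paper: the same fixed-point map $G=(R_1^{-1}Q_1,R_2^{-1}Q_2)$ with Lipschitz constant controlled via Lemma~\ref{Lem:UniformBound}, and the boundary regularity reduced to the bootstrap of Theorem~\ref{Thm:BdryReg}. Your observation that $G$ is affine (so a global contraction suffices) slightly streamlines the paper's argument, which also carries a self-map estimate on a ball $\bar{B}_\rho$, and your regularity sketch is in fact more detailed than the paper's one-line reference back to Theorem~\ref{Thm:BdryReg}.
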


\begin{proof}
Similarly as in the proof of Theorem \ref{Thm:weak-EU}, we observe that $(\ww,S)$ a solution to \eqref{eq:weak-w}-\eqref{eq:weak-S} if and only if $(\ww,S)$ is a fixed point of $G$, where $G:V(\Omega_\gamma)\times H_0^1(\Omega_\gamma)\to V(\Omega_\gamma)\times H_0^1(\Omega_\gamma)$ is defined as
\begin{equation}\label{eq:G}
G=(R_1^{-1}Q_1,R_2^{-1}Q_2).
\end{equation}
Here, $R_1:V(\Omega_\gamma)\to V(\Omega_\gamma)'$ and $R_2:H_0^1(\Omega_\gamma)\to H^{-1}(\Omega_\gamma)$ are given as in \eqref{eq:R1R2}.
Also, $Q_1:V(\Omega_\gamma)\times H_0^1(\Omega_\gamma)\to V(\Omega_\gamma)'$ and $Q_2:V(\Omega_\gamma)\times H_0^1(\Omega_\gamma)\to H^{-1}(\Omega_\gamma)$ are given by
\begin{align}
\label{eq:Q1}\langle Q_1(\ww,S),\vvarphi\rangle=\,&b_1(\vv(\gamma),\ww,\vvarphi)-b_1(\ww,\vv(\gamma),\vvarphi)-b(T(\gamma),S,\vvarphi),&\\
\label{eq:Q2}\langle Q_2(\ww,S),\varphi\rangle=\,&(g(\gamma),\varphi)_2+b_2(\vv(\gamma),S,\varphi)+\frac{\Gra}{\Rey^2}(\ww,\varphi\,{\mathbf{e}})_2.
\end{align}
The rest of the proof runs similarly as for Theorem \ref{Thm:weak-EU}. We give some details below. 

For $(\ww_1,S_1), (\ww_2,S_2)\in V(\Omega_\gamma)\times H_0^1(\Omega_\gamma)$, we have
\begin{align*}
&\|G(\ww_1,S_1)-G(\ww_2,S_2)\|_{V(\Omega_\gamma)\times H_0^1(\Omega_\gamma)}\\
&\qquad\qquad\leq 
C\,\Rey\left(\|\vv(\gamma)\|_{V(\Omega_\gamma)}+\Pra\frac{\Gra}{\Rey^2}\right)\|\ww_1-\ww_2\|_{V(\Omega_\gamma)}\\
&\qquad\qquad\quad +C\,\Rey\left(\Pra\|\vv(\gamma)\|_{V(\Omega_\gamma}+\|T(\gamma)\|_{H^1(\Omega_\gamma)}\right)\|S_1-S_2\|_{H_0^1(\Omega_\gamma)}.
\end{align*}
Also, we know from Theorem \ref{Thm:weak-EU} that 
\begin{equation*}
\|\vv(\gamma)\|_{V(\Omega_\gamma)}+\|\hat{T}(\gamma)\|_{H_0^1(\Omega_\gamma)}\leq\tau,
\end{equation*}
for some $\tau>0$, provided that $\Rey$, $\Pra$, $\Gra$ satisfy \eqref{cond:RePrGr-bis} for some $\varepsilon_1$, $\varepsilon_2$, $\varepsilon_3>0$ and $\boldg_1$, $g_2$, and $T_d$ are sufficiently small. Then, 
\begin{equation*}
\|\vv(\gamma)\|_{V(\Omega_\gamma)}+\|T(\gamma)\|_{H^1(\Omega_\gamma)}\leq C\tau+\|T_d\|_{H^1(\Omega_\gamma)},
\end{equation*}
Thus,
\begin{align}\label{eq:Bound-Contraction}
\|G(\ww_1,S_1)-G(\ww_2,S_2)\|_{V(\Omega_\gamma)\times H_0^1(\Omega_\gamma)}
\leq \kappa_1\|(\ww_1,S_1)-(\ww_2,S_2)\|_{V(\Omega_\gamma)\times H_0^1(\Omega_\gamma)},
\end{align}
where
\begin{align*}
\kappa_1:=C\,\Rey\left(\tau+\tau\,\Pra+\|T_d\|_{H^1(\Omega^\mathrm{M})}+\Pra\|T_d\|_{H^1(\Omega^\mathrm{M})}+\Pra\frac{\Gra}{\Rey^2}\right).
\end{align*}

In an analogous manner, we find that
\begin{align}\label{eq:Bound-Ball}
\|G(\ww,S)\|_{V(\Omega_\gamma)\times H_0^1(\Omega_\gamma)}\leq \kappa_2,
\end{align}
for every $(\ww,S)\in\bar{B}_\rho$, where
\begin{align*}
\kappa_2:=\,&C\,\Rey\left(\tau+\tau\,\Pra+\Pra\frac{\Gra}{\Rey^2}+\|T_d\|_{H^1(\Omega^\mathrm{M})}+\Pra\|T_d\|_{H^1(\Omega^\mathrm{M})}\right)\rho\\
&+C\,\Rey\Pra\,(\tau+\|T_d\|_{H^1(\Omega^\mathrm{M})}),
\end{align*} 
and
\begin{equation*}
\bar{B}_\rho:=\{(\ww,S)\in V(\Omega_\gamma)\times H_0^1(\Omega_\gamma)\,:\,\|\ww\|_{V(\Omega_\gamma)}+\|S\|_{H_0^1(\Omega_\gamma)}\leq\rho\},
\end{equation*}
for $\rho>0$.

It follows from \eqref{eq:Bound-Contraction} and \eqref{eq:Bound-Ball} that $G$ is a contraction from $\bar{B}_\rho$ into itself provided 
\begin{align}\label{eq:Cond-FixedPoint}
\kappa_1<1\qquad\text{and}\qquad\kappa_2\leq\rho,
\end{align}
which can be achieved by an appropriate selection of $\varepsilon_1$, $\varepsilon_2$, $\varepsilon_3$, and sufficiently small data $\boldg_1$, $g_2$, $T_d$. Therefore, $G$ admits a unique fixed point in $\bar{B}_\rho$. The boundary regularity of the weak solution $(\ww,S)$ given by \eqref{eq:w-S-bdry-reg} is obtained analogously as for $(\vv,T)$ in Theorem \ref{Thm:BdryReg}. 
\end{proof}

\section{Differentiability properties}\label{Sec:OptmalityCondition}

In this section we establish a differentiability property of the reduced cost functional $\hat{J}(\gamma)$ by considering specific perturbations of a reference domain $\Omega_\gamma$, and derive a first-order optimality condition to the shape optimization problem \eqref{shape}. More precisely, we shall compute the Gateaux derivative of $\hat{J}(\gamma)$ through formula \eqref{eq:AbstractJDerivative}. To this end, we must carefully define the set of admissible domains since it is closely related to the boundary regularity properties of the weak solutions to the primal and adjoint systems \eqref{v-edp}-\eqref{T-Walls} and \eqref{w-edp}-\eqref{S-Dirichlet}. Specifically, the computation of $e_\gamma(\vv(\gamma),\hat{T}(\gamma),\gamma)$ requires $H^{3/2}$ regularity of the weak solution to the aforementioned fluid problems on certain domain boundary part, as we shall see below. By Theorem \ref{Thm:BdryReg} and Lemma \ref{Lem:AdjointSystem}, this regularity is achieved if we focus on the following subclass of the set of admissible domains $\mathcal{O}$. 

\begin{figure}[ht!]
\centering
\includegraphics[scale=.3]{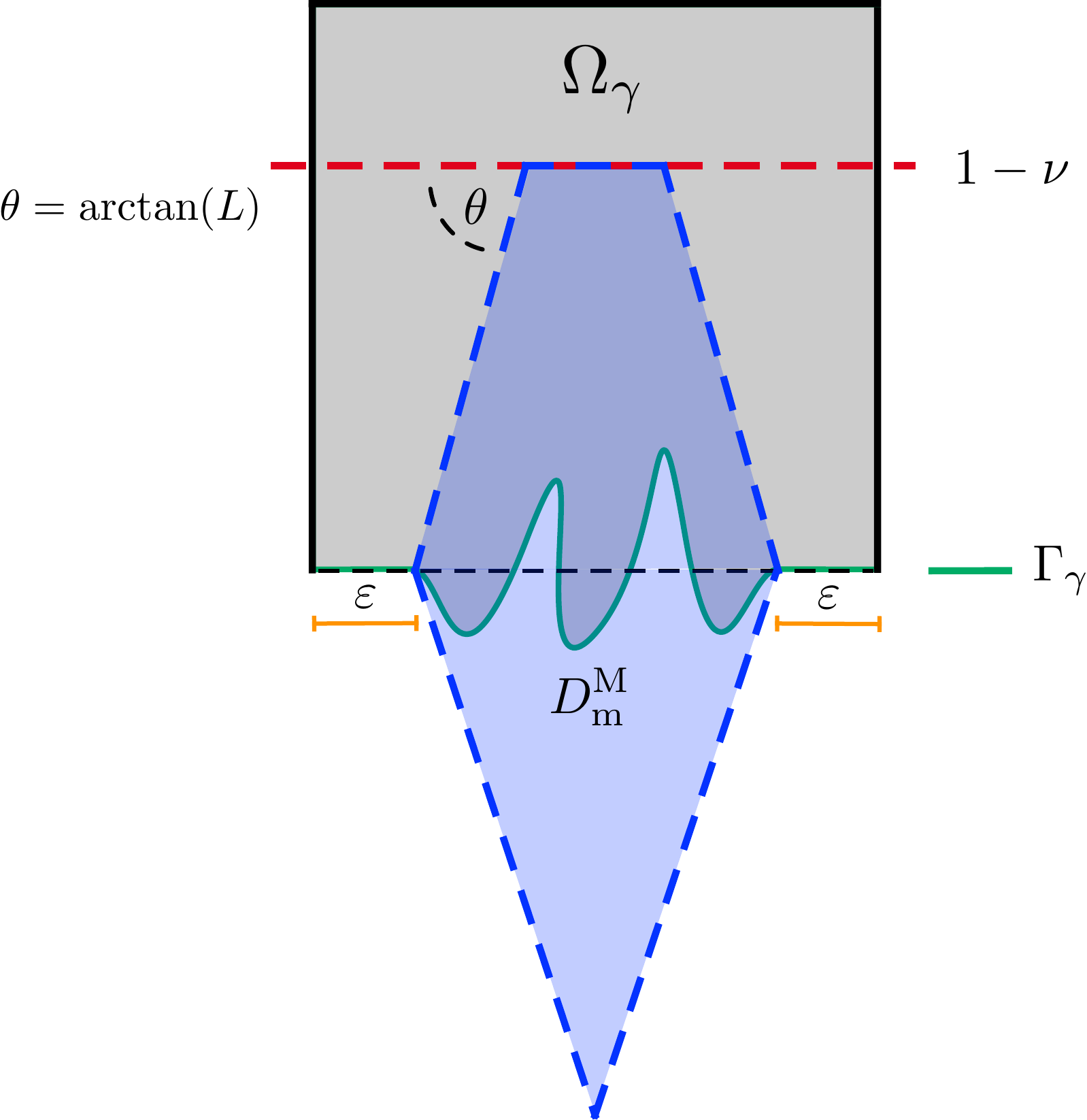}
\label{Fig:Admissible2}
\caption{Admissible domain $\Omega_\gamma\in \mathcal{O}^\varepsilon$ in gray, $\Gamma_\gamma$ in {\color{ForestGreen}green}, where $1-\nu$ is represented by the dashed line in {\color{red}red} and the dashed line in {\color{blue}blue} is the boundary of $D^{\mathrm{M}}_{\mathrm{m}}$ where changes of the boundary are located, $\Gamma_\gamma$ might be located and induced by the obstacle constraint $1-\nu$ and derivative bound $L$.}
\end{figure}

\begin{definition}[$\varepsilon$-admissible domains]
Let $L>0$ and $\varepsilon,\nu\in(0,1)$. We say that $\Omega^\varepsilon\subset\mathbb{R}^2$ is an $\varepsilon$-admissible domain if 
\begin{equation}\label{eq:EpsAdmissDomain}
\Omega^\varepsilon=\{(x_1,x_2)\in\mathbb{R}^2\,:\,0< x_1< 1,\,\gamma(x_1)< x_2< 1\},
\end{equation}
for some $\gamma\in H_{\nu,L}\cap V_\varepsilon$, where $H_{\nu,L}$ was defined in \eqref{eq:H} and
\begin{multline*}
V_\varepsilon:=\{\gamma\in H^3(0,1)\,:\,\gamma^{(\alpha)}=0\text{ a.e. in }(0,\varepsilon)\cup(1-\varepsilon,1)\text{ for }\alpha=0,1,2,\text{ and }\bar{\gamma}=0\},
\end{multline*}
with,
\begin{align}\label{eq:UnitaryVolume}
\bar{\gamma}:=\int_0^1\gamma\,d\xi.
\end{align}
\end{definition}

We denote by $\mathcal{O}^\varepsilon$ the class of all $\varepsilon$-admissible domains $\Omega^\varepsilon$. As before, if $\Omega^\varepsilon\in\mathcal{O}$ is given by \eqref{eq:EpsAdmissDomain}, we write $\Omega^\varepsilon=\Omega^\varepsilon_\gamma$. Notice that $\mathcal{O}^\varepsilon\subset\mathcal{O}$. In particular, this implies that each $\Omega^\varepsilon_\gamma\in\mathcal{O}^\varepsilon$ is a Lipschitz domain. Moreover, the bottom boundary portion $\Gamma_\gamma$ of $\Omega_\gamma^\varepsilon$ is of class $C^2$. Then, for data under the assumptions of Theorem \ref{Thm:BdryReg}, the weak solution $(\vv(\gamma),T(\gamma))$ to the Boussinesq system \eqref{v-edp}-\eqref{T-Walls} in $\Omega_\gamma^\varepsilon$ has $H^{3/2}$ regularity on $\Gamma_\gamma^\varepsilon$, where
\begin{align}\label{eq:UnitaryVolumen}
\Gamma_\gamma^\varepsilon:=\{(x_1,x_2)\in\Gamma_\gamma\,:\,\varepsilon<x_1<1-\varepsilon\}.
\end{align} 
Finally, notice that the condition $\bar{\gamma}=0$ implies that every $\Omega_\gamma^\varepsilon\in\mathcal{O}^\varepsilon$ has a unitary measure. This significantly simplifies the computations of the shape derivative (see \eqref{eq:Jgamma} below), at the reasonably cost of looking for an optimal domain among those having a constant measure. 

\subsection{Shape derivatives}
Additionally, we consider domain perturbations in the form 
$s\mapsto \Omega_s$ given by $\Omega_s=\mathcal{T}_s(\Omega)$
where $\Omega$ is a reference domain and $s\mapsto \mathcal{T}_s$ are diffeomorphisms defined by 
\begin{align*}
\frac{d\mathcal{T}_s}{ds}&=\mathrm{V}(s)\circ \mathcal{T}_s\qquad s\in I,\\
\mathcal{T}_0&=\mathrm{Id}.
\end{align*}
Here, $I$ is a real interval such that $0\in I$ and $V\in C^\infty(I;C^\infty(\overline{\Omega^{\mathrm{M}}};\mathbb{R}^2))$
has compact support in $\Omega^\mathrm{M}\setminus\overline{\Omega}^\mathrm{m}$. Note that $\partial\Omega\setminus\Gamma_\gamma$ is left unchanged via $\mathcal{T}_s$. Additionally, we denote by $(\vv_s,p_s,\hat{T}_s)$ the weak solution in $H^1(\Omega_s;\mathbb{R}^2)\times L^2_0(\Omega_s)\times H^1_0(\Omega_s)$ to the Boussinesq in $\Omega_s$, and consider the following transported variables, defined on $\Omega$,
\begin{equation*}
\vv^s:=\mathbb{P}_s^{-1}(\vv_s),\quad p^s:=p_s\circ \mathcal{T}_s, \quad \text{and}\quad \hat{T}^s:=\hat{T}_s\circ \mathcal{T}_s.
\end{equation*}
The operator $\mathbb{P}_s$ is the \textit{Piola transform}, which is defined as  
\begin{equation}\label{eq:Piola}
\mathbb{P}_s(\vv):\Omega_s\to \mathbb{R}^2,\qquad \mathbb{P}_s(\vv):=(C_s \cdot\vv) \circ  \mathcal{T}_s^{-1},
\end{equation}
for functions $\vv:\Omega\to\mathbb{R}^2$, where $C_s:=J_s^{-1} D\mathcal{T}_s$, $J_s=\det (D\mathcal{T}_s)$, and $D\mathcal{T}_s$ is the Jacobian of $x\mapsto \mathcal{T}_s(x)$.  
It is worth mentioning here that the Piola transform $\mathbb{P}_s$ is an isomorphism between $ V (\Omega)$ and $ V (\Omega_s)$. Also, analogously to the approach in \cite{boisgerault2018shape} we observe that $I\supset I_0 \ni s\mapsto\vv^s\in H^1(\Omega;\mathbb{R}^2)$, $I\supset I_0 \ni s\mapsto p^s\in L^2(\Omega)$, and $I\supset I_0 \ni s\mapsto\hat{T}^s\in H_0^1(\Omega)$ are differentiable for some $I_0$. In contrast to \cite{boisgerault2018shape}, we do not have continuity of $s\mapsto\vv^s$ when considered with values in $H^2$ as in general $\vv^s\notin H^2(\Omega;\mathbb{R}^2)$. However, as we only consider perturbations locally on $\Gamma_\gamma$, we consider the following.

We denote by $D^{\mathrm{M}}_{\mathrm{m}}$ the open set where perturbations of the domain are located. Specifically, we define this set as $D^\mathrm{M}_\mathrm{m}=\Omega^\mathrm{M}\setminus\overline{\Omega}^\mathrm{m}$;
and for a particular admissible domain $\Omega$, we define $\Omega^{\mathrm{M}}_{\mathrm{m}}=\Omega\cap D^{\mathrm{M}}_{\mathrm{m}}$. We say that $\vv$ is \textit{shape differentiable} in $H^1(\Omega;\mathbb{R}^2)$ if the following two conditions hold true:
\begin{itemize}
	\item[i)] $s\mapsto \vv_s\circ \mathcal{T}_s\in H^1(\Omega;\mathbb{R}^2)$ is differentiable at $s=0$. The derivative is denoted as $\dot{\vv}$ and it is called the \textit{material derivative}.
	\item[ii)] The restriction of velocity profile $\vv_0$ associated to the initial domain $\Omega$ to $\Omega^{\mathrm{M}}_{\mathrm{m}}$ has $H^2$ regularity, i.e., $\vv_0|_{\Omega^{\mathrm{M}}_{\mathrm{m}}}\in H^2(\Omega^{\mathrm{M}}_{\mathrm{m}};\mathbb{R}^2)$.
\end{itemize}
Analogously, the shape differentiability of $\hat{T}$ on $H^1_0(\Omega)$ is defined and consequently, { if $\vv$ and $\hat{T}$ are shape differentiable in $H^1(\Omega;\mathbb{R}^2)$ and $H^1_0(\Omega)$, respectively, then} it follows that $p$ is shape differentiable in $L^2(\Omega)$: here  i) and ii) 
hold mutatis mutandis by reducing one order of differentiability on the mentioned Sobolev spaces. Further, the \textit{shape derivative} of $\vv$ is denoted as $\vv'$ and given by
\begin{equation*}
	\vv'=\dot{\vv}-\nabla \vv\cdot \mathrm{V}(0).
\end{equation*}
Note that even though $\nabla\vv$ appears in the expression of the shape derivative, we observe that $\vv'\in H^1(\Omega;\mathbb{R}^2)$, as $\mathrm{V}$ vanishes outside $D^{\mathrm{M}}_{\mathrm{m}}$.
Analogously, we can define the shape derivative of $T$. 
Our goal now is to express the shape derivative $\vv'$ as the derivative of the extension  of $\vv_s$, i.e., as the derivative of the map  $s\mapsto E\vv_s$ where $E$ is a continuous extension from $\Omega_s$ to $\Omega^{\mathrm{M}}$ such that $E:H^1(\Omega;\mathbb{R}^2)\to H^1(\Omega^{\mathrm{M}};\mathbb{R}^2)$ with a norm uniformly bounded with respect to $s$. Also, let $E_0:L^2(\Omega)\to L^2(\Omega^{\mathrm{M}})$ be the extension by zero operator. In particular, we define
\begin{equation*}
	\mathbf{V}_s\circ \mathcal{T}_s=E(\vv_s\circ \mathcal{T}_s),\quad \tau_s\circ \mathcal{T}_s=E(\hat{T}_s\circ \mathcal{T}_s) \quad\text{and}\quad 	P_s\circ \mathcal{T}_s=E_0(p_s\circ \mathcal{T}_s).
\end{equation*}
It can be proved that $J\ni s\mapsto \mathbf{V}_s\in H^1(\Omega^{\mathrm{M}};\mathbb{R}^2)$ and $J\ni s\mapsto \tau_s\in H^1(\Omega^{\mathrm{M}})$ are continuously differentiable: This follows given that $J\ni s\mapsto \mathbf{V}_s\circ \mathcal{T}_s\in H^1(\Omega^{\mathrm{M}};\mathbb{R}^2)$ and $J\ni s\mapsto \tau_s\circ \mathcal{T}_s\in H^1(\Omega^{\mathrm{M}})$ are continuously differentiable, $\mathrm{V}:I\to \Omega^{\mathrm{M}}$ is zero outside $\Omega^{\mathrm{M}}_{\mathrm{m}}$,  $\vv_s\in H^2(\Omega^{\mathrm{M}}_{\mathrm{m}};\mathbb{R}^2)$, and  $T_s\in H^2(\Omega^{\mathrm{M}}_{\mathrm{m}})$  with a uniform norm with respect to $s$; see \cite{boisgerault2018shape} and \cite[Prop 2.38, p.71]{sokolowski1992introduction}.  Hence, we observe that
\begin{equation*}
	\vv'=\frac{\partial\mathbf{V}_s}{\partial s}(0)\Big|_{\Omega}, \quad \text{where} \quad\frac{\partial\mathbf{V}_s}{\partial s}(0)=\frac{\partial\mathbf{V}_s \circ \mathcal{T}_s}{\partial s}(0)-\nabla \mathbf{V}_0\cdot\mathrm{V}(0).
\end{equation*}
and 
\begin{equation*}
	\hat{T}'=\frac{\partial\tau_s}{\partial s}(0)\Big|_{\Omega}, \quad \text{where} \quad\frac{\partial\tau_s}{\partial s}(0)=\frac{\partial\tau_s \circ \mathcal{T}_s}{\partial s}(0)-\nabla \tau_0\cdot\mathrm{V}(0).
\end{equation*}
It follows that $\vv$ and $\hat{T}$ are shape differentiable in $H^1(\Omega;\mathbb{R}^2)$ and $H^1(\Omega)$ respectively, as the regularity of $s\mapsto \vv^s$ and $s\mapsto \hat{T}^s$ imply the regularity of $s\mapsto \vv_s\circ \mathcal{T}_s$ and $s\mapsto \hat{T}_s\circ \mathcal{T}_s$; and due to the additional regularity proven before. Furthermore, this implies that $p$ is {shape differentiable} in $L^2(\Omega)$. 

It follows by standard methods that the shape derivative $(\vv',p',\hat{T}')$ of $(\vv,p,\hat{T})\in {H^1(\Omega;\mathbb{R}^2)\times L^2(\Omega)\times H^1_0(\Omega)}$ satisfies (weakly) the following  system 
\begin{align*}
\vv'\cdot\nabla \vv+\vv\cdot\nabla\vv' -\frac{1}{\Rey}\Delta \vv'+\nabla p'-\frac{\Gra}{\Rey^2}T'\boldsymbol{e}=\,&0&\text{in}&\quad\Omega,&\\
\nabla\cdot\vv'=\,&0&\text{in}&\quad\Omega,&\\
\vv'=\,&0&\quad\text{on}&\quad\partial\Omega\setminus \Gamma_\gamma^\varepsilon,&\\
\vv'=\,&-(\nabla \vv\,\n) (\mathrm{V}(0)\cdot \n)& \quad\text{on}&\quad \Gamma_\gamma^\varepsilon
\end{align*}
and 
\begin{align*}
\vv'\cdot\nabla {T}+\vv\cdot\nabla {T}' -\frac{1}{\Rey\Pra}\Delta {T}'=\,&0&\text{in}&\quad\Omega,&\\
\hat{T}'=\,&0&\text{on}&\quad\partial\Omega\setminus\Gamma_\gamma^\varepsilon ,&\\
\hat{T}'=\,&-(\nabla \hat{T}\,\n) (\mathrm{V}(0)\cdot \n)& \quad\text{on}&\quad \Gamma_\gamma^\varepsilon,
\end{align*}
where $T=\hat{T}+T_d$. Thus, the adjoint formalism of Section \ref{Sect:Adjoint} is rigorous in view of the existence and structure of the shape derivatives. 
 
\subsection{Functional perturbations of the boundary}
From now on, we consider perturbations of a given reference domain $\Omega^\varepsilon_\gamma$, in the form
\begin{equation*}
\Omega_\gamma\mapsto\Omega_{\gamma+sh},
\end{equation*}
where $h\in H_{\nu,L}\cap V_\varepsilon$ and $s\geq 0$ is sufficiently small to have $\gamma+sh\in H_{\nu,L}\cap H_\varepsilon$. This sort of perturbations is achieved by considering a family of diffeomorphism $\mathcal{T}_s=\mathcal{T}_{s,\gamma,h}$, defined as before, with constant $\mathrm{V}(s)=\mathrm{V}_{\gamma,h}$ given by
\begin{equation}\label{eq:Diffeo}
\mathrm{V}_{\gamma,h}(x_1,x_2)=\left(0,\frac{1-x_2}{1-\gamma(x_1)}h(x_1)\right).
\end{equation}
Notice that, in this case, $\mathcal{T}_{s,\gamma,h}(x_1,x_2)=(\mathrm{Id}+s\mathrm{V}_{\gamma,h})(x_1,x_2),$
so that
\begin{equation}\label{eq:subclass}
\mathcal{T}_{s,\gamma,h}(\Omega_\gamma)=\Omega_{\gamma+sh}.
\end{equation}
Observe that the choice of $V_{\gamma,h}$ in \eqref{eq:Diffeo} to obtain the characterization \eqref{eq:subclass} is not unique. Also note that points are only moved in the $x_2$-direction. 
We are now in a position to compute the Gateaux derivative of objective functional through \eqref{eq:AbstractJDerivative}.  

\begin{theorem}[Gateaux derivative of the objective functional]\label{Thm:ShapeDer}Let $\Omega_\gamma^\varepsilon\in\mathcal{O}^\varepsilon$.
Consider $\boldg_1\in L^2(\Omega^\mathrm{M};\mathbb{R}^2)$, $g_2\in L^2(\Omega^\mathrm{M})$, and $T_d\in H^{2}(\Omega^\mathrm{M})\cap C_c(\Omega^\mathrm{M})$. Assume that $\Rey$, $\Pra$, $\Gra$ satisfy \eqref{cond:RePrGr-bis} and $\boldg_1$, $g_2$, $T_d$ satisfy the smallness condition in Lemma \ref{Lem:Adjoint}. Then,
\begin{align}\label{eq:ShapeDerivative}
\hat{J}'(\gamma)h=\,&
-\int_\varepsilon^{1-\varepsilon}\left(F(\xi,\gamma(\xi))
+\lambda_1\gamma''(\xi)\right)h(\xi)\,d\xi,
\end{align}
for every $h\in H_{\nu,L}\cap V_\varepsilon$, where 
\begin{align}\label{F-Def}
F(\xi,\gamma(\xi)):=\,&\frac{1}{\Rey}\left(\frac{\partial\vv}{\partial{\bf n}}\cdot\frac{\partial\ww}{\partial{\bf n}}\right)(\xi,\gamma(\xi))+
\frac{1}{\Rey\Pra}\left(\frac{\partial T}{\partial{\bf n}}\frac{\partial S}{\partial{\bf n}}\right)(\xi,\gamma(\xi))\notag\\
&+
\frac{1}{2}|T(\xi,\gamma(\xi))-I(T,\gamma)|^2,
\end{align}
and $(\vv,T)$ and $(\ww,S)$ are the weak solutions to the primal \eqref{v-edp}-\eqref{T-Walls} and the adjoint \eqref{w-edp}-\eqref{S-Dirichlet} systems in $\Omega_\gamma^\varepsilon\in \mathcal{O}^\varepsilon$, respectively.
\end{theorem}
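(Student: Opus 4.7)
The plan is to apply the adjoint-state identity \eqref{eq:AbstractJDerivative} with $(\ww, S)$ the weak solution of the adjoint system supplied by Lemma \ref{Lem:Adjoint}, and to evaluate the resulting expression via Hadamard's shape-derivative formula. The computation splits naturally into three pieces: the explicit $\gamma$-dependence through the regularization in $J$, the $L^2$-misfit term $\|T - I\|^2_{L^2(\Omega_\gamma)}$, and the state-constraint contribution coming from the moving boundary $\Gamma_\gamma$. The $C^2$-regularity of $\Gamma_\gamma$ baked into $\mathcal{O}^\varepsilon$ and the $H^{3/2}$ boundary regularity of the primal and adjoint states on $\Gamma_\gamma^\varepsilon$ (from Theorem \ref{Thm:BdryReg} and Lemma \ref{Lem:Adjoint}) are essential: they simultaneously justify Hadamard's formula in this Lipschitz setting and guarantee that the normal-derivative products appearing in $F$ are well-defined $L^2$ functions on $\Gamma_\gamma^\varepsilon$.

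I would first dispatch the explicit $\gamma$-dependence. The regularizer $\tfrac{\lambda_1}{2}\|\gamma'\|^2_{L^2(0,1)}$ contributes $\lambda_1\int_0^1\gamma' h'\,d\xi = -\lambda_1\int_\varepsilon^{1-\varepsilon}\gamma''h\,d\xi$ after integration by parts, which is legitimate because $h\in V_\varepsilon$ vanishes together with its first two derivatives near $0$ and $1$. The mean-squared term $\tfrac{\lambda_2}{2}\bar\gamma^2$ has zero derivative because $\bar\gamma=0$ for every $\gamma\in V_\varepsilon$. For the misfit, the Reynolds/Hadamard formula produces a boundary contribution proportional to $\int_{\Gamma_\gamma^\varepsilon}|T-I|^2(\mathrm{V}\cdot\n)\,dS$ and an interior contribution proportional to $\int_{\Omega_\gamma}(T-I)\,T'\,dx$; the contribution from differentiating $I$ itself collapses by the identity $\int_{\Omega_\gamma}(T-I)\,dx=0$, and the volume-preservation $|\Omega_\gamma|=1$ enforced by $\bar\gamma=0$ keeps the bookkeeping clean.

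Next I would convert the remaining interior integral $\int(T-I)T'\,dx$ into a boundary integral by testing the adjoint $S$-equation \eqref{S-edp} against $T'$ and the adjoint $\ww$-equation \eqref{w-edp} against $\vv'$, then integrating by parts using the linearized primal equations satisfied by $(\vv', T')$. This is where the shape-derivative boundary data $\vv' = -(\partial_\n\vv)(\mathrm{V}\cdot\n)$ and $\hat T' = -(\partial_\n\hat T)(\mathrm{V}\cdot\n)$ on $\Gamma_\gamma^\varepsilon$ (and zero elsewhere) enter, together with the homogeneous Dirichlet conditions $\vv = \ww = \hat T = S = 0$ on $\partial\Omega_\gamma$. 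The pressure contributions vanish by divergence-freeness combined with the incompressibility-forced identity $\partial_\n\vv\cdot\n = 0$ on $\partial\Omega_\gamma$; the buoyancy contributions $\tfrac{\Gra}{\Rey^2}\int T'\,\ww\cdot\mathbf{e}$ appear twice with opposite signs and cancel; and the trilinear convective terms cancel telescopically after using both the linearized Navier--Stokes equation and the adjoint momentum equation. What survives are exactly the boundary integrands $\tfrac{1}{\Rey}(\partial_\n\vv\cdot\partial_\n\ww)(\mathrm{V}\cdot\n)$ and $\tfrac{1}{\Rey\Pra}(\partial_\n T\,\partial_\n S)(\mathrm{V}\cdot\n)$ on $\Gamma_\gamma^\varepsilon$; the tensor reductions $\nabla\vv = (\partial_\n\vv)\otimes\n$, $\nabla\ww = (\partial_\n\ww)\otimes\n$, $\nabla S = (\partial_\n S)\n$ are automatic since these fields have vanishing tangential traces on $\Gamma_\gamma^\varepsilon$.

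Finally, I would parameterize $\Gamma_\gamma^\varepsilon$ by $\xi\mapsto(\xi,\gamma(\xi))$. Using \eqref{eq:Diffeo}, the restriction $\mathrm{V}_{\gamma,h}|_{\Gamma_\gamma}$ simplifies to $(0,h(\xi))$, and the outward unit normal is $\n(\xi) = (1+\gamma'(\xi)^2)^{-1/2}(\gamma'(\xi), -1)$; together with the arclength element $dS = (1+\gamma'(\xi)^2)^{1/2}d\xi$ these combine to the clean identity $(\mathrm{V}\cdot\n)\,dS = -h(\xi)\,d\xi$, with the surface measure factor exactly cancelling the denominator in $\n$. This converts every boundary surface integral on $\Gamma_\gamma^\varepsilon$ into an integral over $(\varepsilon, 1-\varepsilon)$ with an overall minus sign, producing \eqref{eq:ShapeDerivative} after assembling all three contributions. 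The main technical obstacle is justifying the shape differentiability of $s\mapsto(\vv^s, \hat T^s)$ and the applicability of the formal adjoint-state formula in this Lipschitz setting: this is exactly what the Piola-transport machinery of the previous subsection was constructed for, and it leverages the extra interior $H^2$-regularity of the states on $\Omega_\mathrm{m}^\mathrm{M}$ together with the $C^2$-regularity of $\Gamma_\gamma^\varepsilon$ built into $\mathcal{O}^\varepsilon$.
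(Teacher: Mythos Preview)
Your approach is correct and reaches the stated formula, but it does not follow the paper's route. The paper applies \eqref{eq:AbstractJDerivative} literally: it computes $e_\gamma(\vv,\hat T,\gamma)^*(\ww,S)h$ \emph{directly}, by applying Hadamard's boundary formula to each of the domain integrals that define $\langle e_1,\ww\rangle$ and $\langle e_2,S\rangle$ (with $(\vv,\hat T)$ held fixed). Because $\vv=\ww=S=0$ on $\partial\Omega_\gamma^\varepsilon$, all boundary integrands except $\frac{1}{\Rey}\nabla\vv:\nabla\ww$ and $\frac{1}{\Rey\Pra}\nabla T\cdot\nabla S$ vanish, and since the tangential derivatives of $\ww,S$ are zero these collapse immediately to the normal-derivative products. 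The partial derivative $J_\gamma$ is computed the same way, giving the $\tfrac12|T-I|^2$ and $-\lambda_1\gamma''$ terms. No shape derivative $(\vv',T')$ of the state, no linearized system, and no integration-by-parts cascade ever appears.

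What you do instead is compute the \emph{total} derivative of $J$ (so that the interior term $\int(T-I)T'$ survives), and then eliminate $(\vv',T')$ by testing the strong-form adjoint equations against them and invoking the linearized primal system with its inhomogeneous boundary data. This is the classical C\'ea/Lagrangian route; it is equivalent and yields the same boundary expression after the telescoping cancellations you describe, but it is considerably longer and requires the extra machinery of Section~5.1 (the linearized system for $(\vv',p',T')$ and its boundary conditions) at the computational level, whereas the paper only needs that machinery to \emph{justify} \eqref{eq:AbstractJDerivative} and then discards it. Your approach has the advantage of making the role of the shape-derivative boundary data $\vv'|_{\Gamma_\gamma^\varepsilon}=-(\partial_\n\vv)(\mathrm V\cdot\n)$, $\hat T'|_{\Gamma_\gamma^\varepsilon}=-(\partial_\n\hat T)(\mathrm V\cdot\n)$ explicit; the paper's approach buys brevity. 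One caution: in your elimination you will pick up $\partial_\n\hat T$ from the boundary data on $\hat T'$, and you should check that the bookkeeping with $T=\hat T+T_d$ really delivers $\partial_\n T$ in the final formula, as the paper's direct Hadamard computation does automatically.
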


\begin{proof}
As in the proof of Lemma \ref{Lem:AdjointSystem}, we denote by $(\vv,\hat{T})$ to a generic element of $V(\Omega^\varepsilon_\gamma)\times H_0^1(\Omega^\varepsilon_\gamma)$. We begin by computing $e_{\gamma}(\vv,\hat{T},\gamma)^*$. For this, we notice that
\begin{align*}
&\langle e_{1,\gamma}(\vv,\hat{T},\gamma)h,\ww\rangle\\
&\qquad=
\int_{\partial\Omega_\gamma^\varepsilon}(\vv\cdot\nabla\vv)\cdot\ww\,(V(\gamma,h)\cdot{\bf n})\,d\sigma+\frac{1}{\Rey}\int_{\partial\Omega_\gamma^\varepsilon}(\nabla\vv\cdot\nabla\ww)(V(\gamma,h)\cdot{\bf n})\,d\sigma\\
&\qquad\quad-\frac{\Gra}{\Rey^2}\int_{\partial\Omega_\gamma^\varepsilon}(T\mathbf{e}\cdot\ww)(V(\gamma,h)\cdot{\bf n})\,d\sigma
-\int_{\partial\Omega_\gamma^\varepsilon}(\boldg_1\cdot\ww)(V(\gamma,h)\cdot{\bf n})\,d\sigma\\
&\qquad=\frac{1}{\Rey}\int_{\Gamma_\gamma^\varepsilon}(\nabla\vv\cdot\nabla\ww)(V(\gamma,h)\cdot{\bf n})\,d\sigma\\
&\qquad=\frac{1}{\Rey}\int_{\Gamma_\gamma^\varepsilon}\left(\frac{\partial\vv}{\partial{\bf n}}\cdot\frac{\partial\ww}{\partial{\bf n}}\right)(V(\gamma,h)\cdot{\bf n})\,d\sigma,
\end{align*}
since $\vv=\ww=0$ on $\partial\Omega_\gamma^\varepsilon$ and $V(\gamma,h)=0$ on $\partial\Omega_\gamma^\varepsilon\setminus\Gamma_\gamma^\varepsilon$. 
The unitary normal exterior vector field ${\bf n}$ to $\Gamma_\gamma^\varepsilon$ is given by
\begin{align*}
{\bf n}(\xi,\gamma(\xi))=\left(\frac{\gamma'(\xi)}{\sqrt{1+(\gamma'(\xi))^2}},\frac{-1}{\sqrt{1+(\gamma'(\xi))^2}}\right),\qquad\qquad\xi\in(\varepsilon,1-\varepsilon).
\end{align*}
Then,
\begin{align*}
V(\xi,\gamma(\xi))\cdot{\bf n}(\xi,\gamma(\xi))=\frac{-h(\xi)}{\sqrt{1+(\gamma'(\xi))^2}},\qquad\qquad \xi\in(\varepsilon,1-\varepsilon).
\end{align*}
Hence,
\begin{align*}
\langle e_{1,\gamma}(\vv,\hat{T},\gamma)h,\ww\rangle=
-\int_\varepsilon^{1-\varepsilon}h(\xi)F_1(\xi,\gamma(\xi))\,d\xi,
\end{align*}
where $F_1(\xi,\gamma(\xi)):=\frac{1}{\Rey}\left(\frac{\partial\vv}{\partial{\bf n}}\cdot\frac{\partial\ww}{\partial{\bf n}}\right)(\xi,\gamma(\xi))$. Similarly,
\begin{align*}
\langle e_{2,\gamma}(\vv,\hat{T},\gamma)h,S\rangle=-
\int_\varepsilon^{1-\varepsilon}h(\xi)F_2(\xi,\gamma(\xi))\,d\xi,
\end{align*}
where $F_2(\xi,\gamma(\xi)):=\frac{1}{\Rey\Pra}\left(\frac{\partial T}{\partial{\bf n}}\frac{\partial S}{\partial{\bf n}}\right)(\xi,\gamma(\xi))$.
Then, 
\begin{align}\label{eq:partial-e-gamma}
e_\gamma(\vv,\hat{T},\gamma)^*(\ww,S)h=
-\int_\varepsilon^{1-\varepsilon}h(\xi)(F_1(\xi,\gamma(\xi))+F_2(\xi,\gamma(\xi)))\,d\xi.
\end{align}

Now we focus on $J_{\gamma}(\vv,\hat{T},\gamma)$. A direct calculation yields
\begin{align}\label{eq:Jgamma}
J_{\gamma}(\vv,\hat{T},\gamma)h=\,&
-I(T,\gamma)I(T-I(T,\gamma),\gamma)+\frac{1}{2}\int_{\partial\Omega_\gamma^\varepsilon}|T-I(T,\gamma)|^2V(\gamma,h)\cdot{\bf n}\,d\sigma\notag\\
&+\lambda_1\int_0^1\gamma'(\xi)h'(\xi)\,d\xi.
\end{align}
We have used here that $I_\gamma(T,\gamma)h=I(T,\gamma)$ since every domain in $\mathcal{O}^\varepsilon$ has a unitary measure.  Exploiting this again, we notice that $I(T-I(T,\gamma),\gamma)=0$. Then, we get
\begin{align*}
J_{\gamma}(\vv,\hat{T},\gamma)h=
\frac{1}{2}\int_{\Gamma_\gamma^\varepsilon}|T-I(T,\gamma)|^2V(\gamma,h)\cdot{\bf n}\,d\sigma-\lambda_1\int_\varepsilon^{1-\varepsilon}\gamma''(\xi)h(\xi)\,d\xi,
\end{align*}
where we have used that $\gamma\in H^2(0,1)$.
Then, similarly as before, we deduce that
\begin{align}\label{eq:partial-J-gamma}
J_{\gamma}(\vv,\hat{T},\gamma)h=
-\int_\varepsilon^{1-\varepsilon}h(\xi)F_3(\xi,\gamma(\xi))\,d\xi-\lambda_1\int_\varepsilon^{1-\varepsilon}\gamma''(\xi)h(\xi)\,d\xi.
\end{align}
where $F_3(\xi,\gamma(\xi)):=\frac{1}{2}|T(\xi,\gamma(\xi))-I(T,\gamma)|^2$.

Using \eqref{eq:partial-e-gamma} and \eqref{eq:partial-J-gamma} in \eqref{eq:AbstractJDerivative}, we obtain \eqref{eq:ShapeDerivative}.
\end{proof}

The existence of a solution to the shape optimization problem \eqref{shape}, now posed for the subclass $\mathcal{O}^\varepsilon$ of admissible domains, can be proved exactly as for Theorem \ref{Thm:weak-EU}. Moreover, both existence results hold true under the same assumptions. Then, we finally have the following optimality condition for the shape optimization problem \eqref{shape}.

\begin{theorem}[Optimality condition]
Consider $\boldg_1\in L^2(\Omega^\mathrm{M};\mathbb{R}^2)$, $g_2\in L^2(\Omega^\mathrm{M})$, and $T_d\in H^{2}(\Omega^\mathrm{M})\cap C_c(\Omega^\mathrm{M})$. Assume that $\Rey$, $\Pra$, $\Gra$ satisfy \eqref{cond:RePrGr-bis} and $\boldg_1$, $g_2$, $T_d$ satisfy the smallness condition in Lemma \ref{Lem:Adjoint}. If $\Omega_{\gamma^*}$ solves problem \eqref{shape} in $\mathcal{O}^\varepsilon$, then 
\begin{align}\label{eq:OptCond}
\int_\varepsilon^{1-\varepsilon}\left(F(\xi,\gamma^*(\xi))
+\lambda{\gamma^*}''(\xi)\right)(\gamma^*(\xi)-h(\xi))\,d\xi\geq0,
\end{align}
for all $h\in H_{\nu,L}\cap V_\varepsilon$, where $F$ is given by \eqref{F-Def}.
\end{theorem}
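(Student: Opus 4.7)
My plan is to exploit the convex structure of the admissible set $H_{\nu,L}\cap V_\varepsilon$ together with the explicit Gateaux derivative formula already established in Theorem \ref{Thm:ShapeDer}. The key observation is that $H_{\nu,L}\cap V_\varepsilon$ is a convex subset of $H^3(0,1)$: indeed, the pointwise obstacle constraint $\gamma\le 1-\nu$, the pointwise Lipschitz bound $|\gamma'|\le L$, the vanishing-derivatives conditions at the endpoints, and the linear volume constraint $\bar\gamma=0$ are all preserved under convex combinations. Hence for any fixed competitor $h\in H_{\nu,L}\cap V_\varepsilon$ and any $t\in[0,1]$, the interpolant
\begin{equation*}
\gamma_t:=(1-t)\gamma^*+th=\gamma^*+t(h-\gamma^*)
\end{equation*}
lies in $H_{\nu,L}\cap V_\varepsilon$, so the perturbed domain $\Omega_{\gamma_t}$ is again $\varepsilon$-admissible.

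Next, I would appeal to optimality: since $\gamma^*$ minimizes $\hat J$ over $\mathcal O^\varepsilon$, the scalar map $t\mapsto \hat J(\gamma_t)$ attains its minimum at $t=0$. Consequently its right derivative at zero satisfies
\begin{equation*}
\lim_{t\to 0^+}\frac{\hat J(\gamma_t)-\hat J(\gamma^*)}{t}\ge 0.
\end{equation*}
The limit on the left is, by definition, the Gateaux derivative $\hat J'(\gamma^*)(h-\gamma^*)$ in the admissible direction $v:=h-\gamma^*$. Here it is essential that $v$ is a feasible direction at $\gamma^*$ (by the convexity argument above), so the shape-calculus machinery of Section \ref{Sec:OptmalityCondition} applies to the family $\{\gamma^*+tv\}_{t\in[0,1]}$ exactly as it does in Theorem \ref{Thm:ShapeDer}.

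To conclude, I would invoke Theorem \ref{Thm:ShapeDer} to identify the Gateaux derivative with the boundary integral representation. Replacing $h$ in \eqref{eq:ShapeDerivative} by the direction $v=h-\gamma^*$, we obtain
\begin{equation*}
\hat J'(\gamma^*)(h-\gamma^*)=-\int_\varepsilon^{1-\varepsilon}\bigl(F(\xi,\gamma^*(\xi))+\lambda_1{\gamma^*}''(\xi)\bigr)\bigl(h(\xi)-\gamma^*(\xi)\bigr)\,d\xi\ge 0,
\end{equation*}
which, after reversing signs, yields precisely the claimed inequality \eqref{eq:OptCond} for every $h\in H_{\nu,L}\cap V_\varepsilon$.

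The main technical point to be careful about is the justification that Theorem \ref{Thm:ShapeDer} indeed applies with the direction $v=h-\gamma^*$, which in general does not itself belong to $H_{\nu,L}\cap V_\varepsilon$ (the pointwise constraints may fail). What matters, however, is not that $v$ lie in the constraint set, but only that the whole segment $\{\gamma_t\}_{t\in[0,1]}$ stay in $\mathcal O^\varepsilon$, which was established in the first step. The diffeomorphism family $\mathcal T_{s,\gamma^*,v}$ induced by \eqref{eq:Diffeo} is well defined because $v\in H^3(0,1)$ has the same vanishing properties near the endpoints as $\gamma^*$ and $h$ (so the perturbation is localized in $D^{\mathrm M}_{\mathrm m}$), and the derivative formula in \eqref{eq:ShapeDerivative} is linear in the perturbation direction, so it extends from $H_{\nu,L}\cap V_\varepsilon$ to its linearization—in particular, to admissible directions of the form $h-\gamma^*$—by the same computation.
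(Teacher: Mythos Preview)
Your argument is correct and is precisely the standard route the paper has in mind: the theorem is stated in the paper without an explicit proof, as an immediate consequence of the shape-derivative formula of Theorem~\ref{Thm:ShapeDer} combined with the convexity of $H_{\nu,L}\cap V_\varepsilon$ and minimality of $\gamma^*$. Your handling of the technical point---that the admissible direction $v=h-\gamma^*$ need not lie in $H_{\nu,L}$ but does lie in the linear space $V_\varepsilon$, so that the perturbation family $\{\gamma^*+tv\}_{t\in[0,1]}$ remains in $\mathcal O^\varepsilon$ and formula~\eqref{eq:ShapeDerivative} applies verbatim---is the right observation to make the argument airtight.
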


\section{Algorithm and numerical simulations}\label{Sec:Numerics}
In this section, we propose an algorithm to find a solution $\gamma^*\in H_{\nu,L}\cap V_\varepsilon$ to the variational inequality \eqref{eq:OptCond}, and present several numerical tests. 

For numerical purposes we treat $\varepsilon=0$ and we consider the following regularization of the gradient induced by the Gateaux derivative in \eqref{eq:ShapeDerivative}:
\begin{equation}\label{eq:PertShapeDer}
DJ(\gamma)(\xi)=-F(\xi,\gamma(\xi))-\lambda_1{\gamma}''(\xi)+\lambda_2\int_0^1\gamma(\zeta)\,d\zeta+\lambda_3(\gamma(\xi)-1+\nu)^+,
\end{equation}
where $\lambda_2,\lambda_3>0$ and $\nu\in(0,1)$ are given, and $F$ is defined by \eqref{F-Def}. This regularization is put in place in order to relax the volume and ``bottom not touching top'' constraints. By choosing large values of $\lambda_2$ and $\lambda_3$, the last two terms in \eqref{eq:PertShapeDer} penalize the fact that $\gamma$ does not have a zero average in the interval $[0,1]$ and that $\gamma$ does not fall below $1-\nu$. 
This regularization is consistent with the following cost functional:
\begin{equation*}
J(T,\gamma)=J_1(T,\gamma)+J_2(\gamma),
\end{equation*}
where
\begin{align}
J_1(T,\gamma)=\,&\|T-I(T,\gamma)\|^2_{L^2(\Omega_\gamma)},\\
J_2(\gamma)=\,&\frac{\lambda_1}{2}\|\gamma\,'\|^2_{L^2(0,1)}+\frac{\lambda_2}{2}\left(\int_0^1\gamma(\xi)\,d\xi\right)^2+\frac{\lambda}{2}\int_0^1(\gamma(\xi)-1+\nu)^{+2}\,d\xi,
\end{align}
As before, here $I(T,\gamma)$ is given by \eqref{eq:average}.
The proposed algorithm consists on solving
\begin{equation}
DJ(\gamma)=0\quad\text{in}\quad(0,1),\qquad \gamma(0)=\gamma(1)=0,
\end{equation}
by the gradient descent scheme according to
\begin{equation}
\gamma_{n+1}=\gamma_n-\tau (-\Delta)^{-1}DJ(\gamma_n),\label{eq:gd}
\end{equation} 
where $\tau>0$ and $\gamma_0\in H^2(\Omega)\cap H^1_0(\Omega)$ such that $\int_0^1 \gamma_0(\zeta)\mathrm{d}\zeta=0$ are given; and $(-\Delta)^{-1}$ is the inverse of the Laplace operator associated with zero boundary conditions. We end the iterative procedure when the solution $\varphi_n$ to
\begin{equation}\label{eq:Poisson}
-\varphi_n''=DJ(\gamma_n)\quad\text{in}\quad(0,1), \qquad \varphi_n(0)=\varphi_n(1)=0,
\end{equation}
is sufficiently small, providing we are close to the desired function $\gamma$.

In the following, we report several numerical tests to validate the proposed algorithm and our theoretical results. In all the subsequent tests, the boundary data $T_d$ is given by
\begin{equation*}
T_d(x_1,x_2)=\alpha x_1(1-x_1)(1-x_2),\qquad x_1,x_2\in[0,1],
\end{equation*}
for some $\alpha>0$. Further, we shall take the parameters as
$\Pra = 0.7, \Gra = 1, \Rey = 1, \alpha = 10,\lambda_1 = 0.5, \lambda_2 = 1.5E4,\lambda_3 = 1E3, \nu= 0.1,\tau = 1E-3$. The computational mesh $\mathcal{T}_h$ is generated as a structured triangular grid with mesh size $h = 0.03.$ Taylor-Hood element (${\bf V}_h\times Q_h$) of order $k=2$ and discontinuous Galerkin finite element methods of order $k=1$ ($W_h$) will be used to solve fluid problems ($({\bf v},p)$ and $({\bf w},q)$) and heat equations ($T$ and $S$). %\AC{[Please, review this paragraph]}. 
Here the finite element spaces are denoted as
\begin{eqnarray*}
    {\bf V}_h &=& \{{\bf v}\in [H_0^1(\Omega_\gamma)]^2\,:\, {\bf v}\vert_T\in [\mathbb{P}_2(K)]^2,\; K\in\mathcal{T}_h\},\\
    Q_h &=& \{q\in H^1(\Omega_\gamma)\,:\, q\vert_T\in \mathbb{P}_1(K),\; K\in\mathcal{T}_h\}\cap L_0^2(\Omega_\gamma),\\
    W_h &=&\{w\in L^2(\Omega_\gamma)\,:\, w\vert_T\in\mathbb{P}_1(K),\; K\in\mathcal{T}_h\}.
\end{eqnarray*}
The finite difference method has been used to solve the one-dimensional Poisson equation \eqref{eq:Poisson}. As an initial state, we will test $\gamma_0$ as a straight line bottom and a curvy bottom. Fig.~\ref{fig:case1-case3} illustrates the initial domain $\Omega_\gamma$ to start our optimization process. 

\begin{figure}[H]
    \centering
    \begin{tabular}{ccc}
   \includegraphics[width=.3\textwidth]{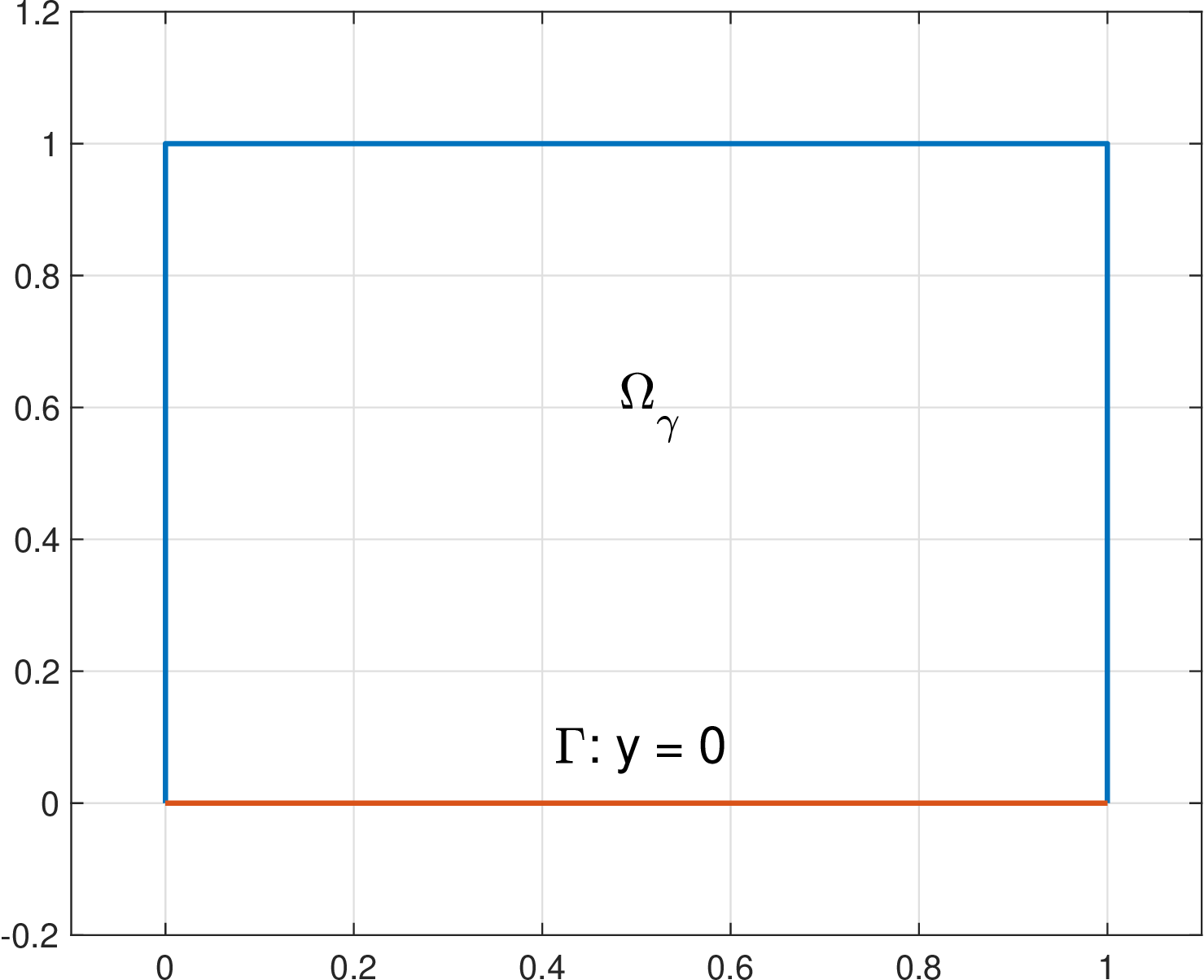}&\includegraphics[width=.3\textwidth]{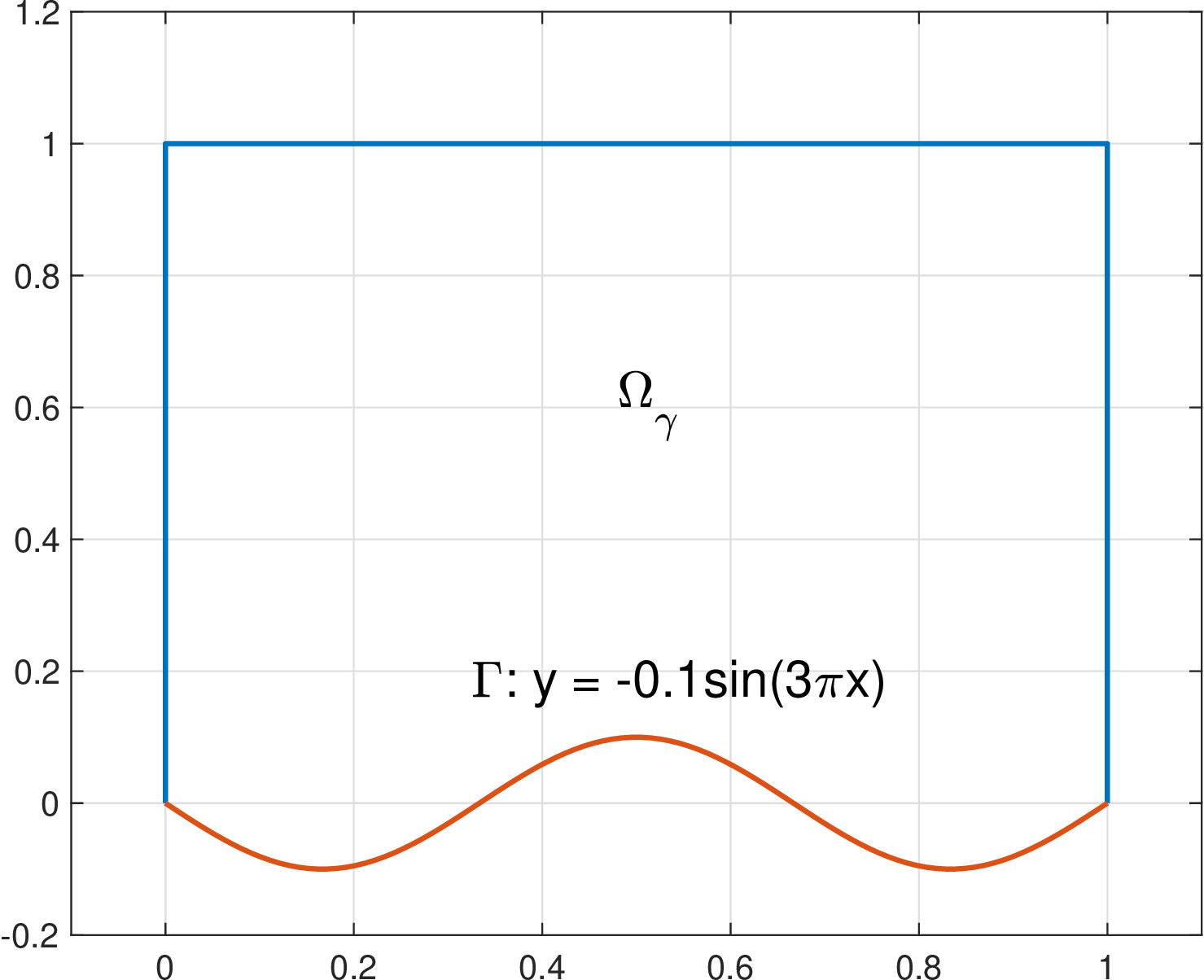}&\includegraphics[width=.3\textwidth]{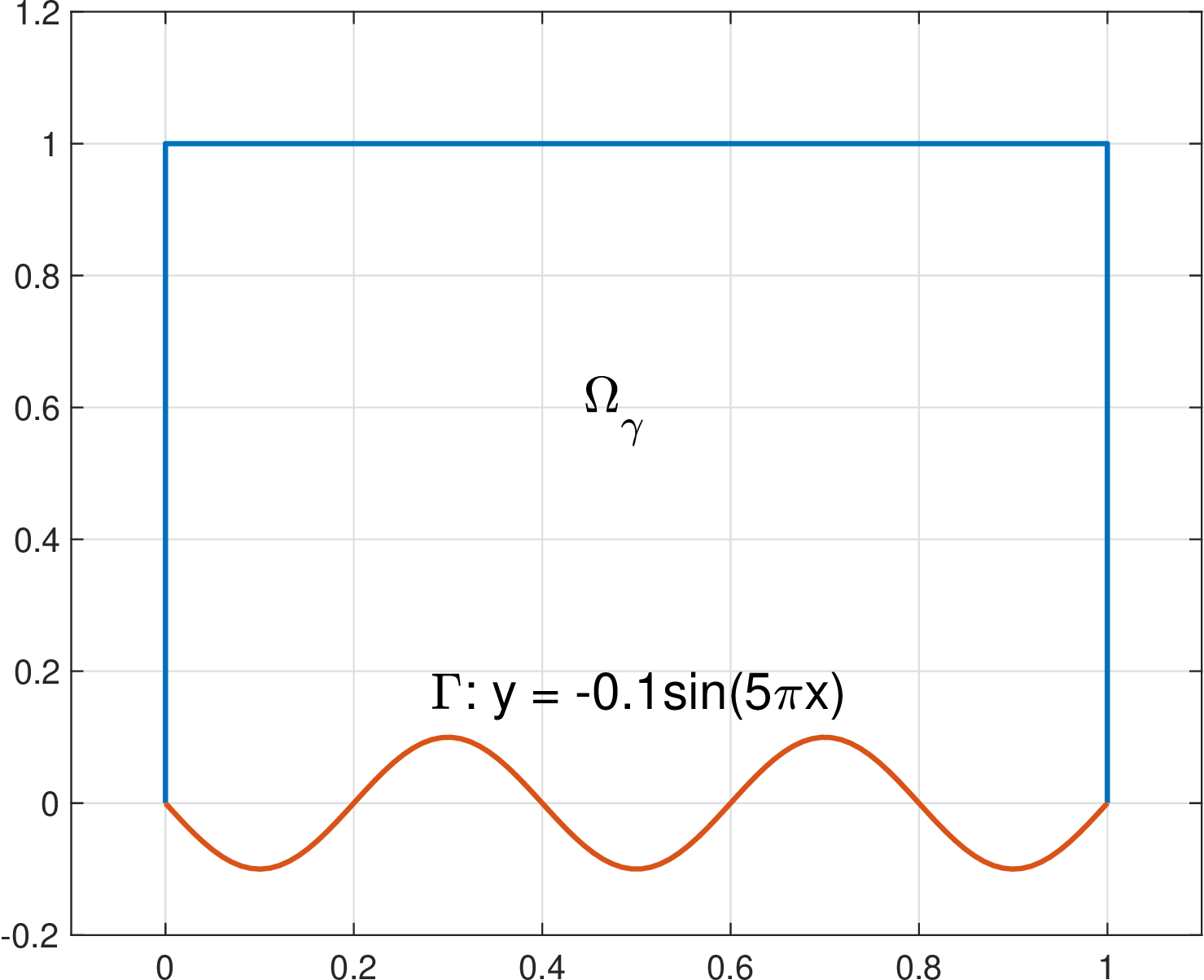}  \\
         (a). Case 1 & (b). Case 2 & (c). Case 3        
    \end{tabular}
    \caption{Illustration of the initial domain $\Omega_\gamma$ in Section~\ref{Sect:Num-1}-Section~\ref{Sect:Num-3}.}
    \label{fig:case1-case3}
\end{figure}

\subsection{Case 1: initial state with $\gamma: y = 0$}\label{Sect:Num-1}

\begin{figure}[H]
    \centering
    \begin{tabular}{ccc}
    \includegraphics[width=0.3\linewidth]{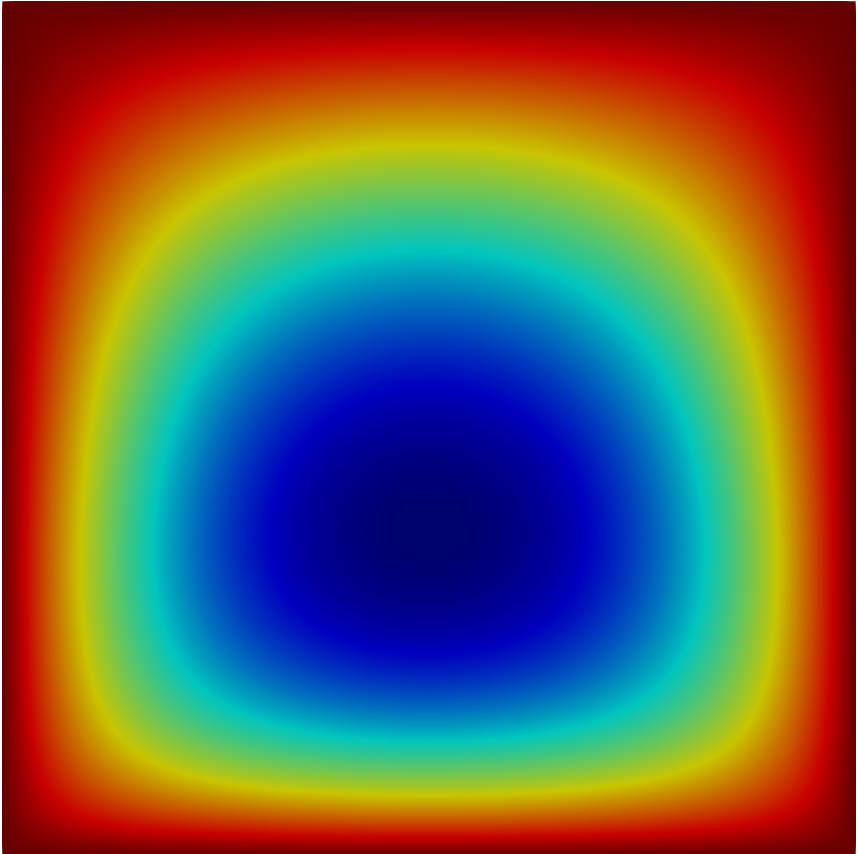}
    &\includegraphics[width=0.3\linewidth]{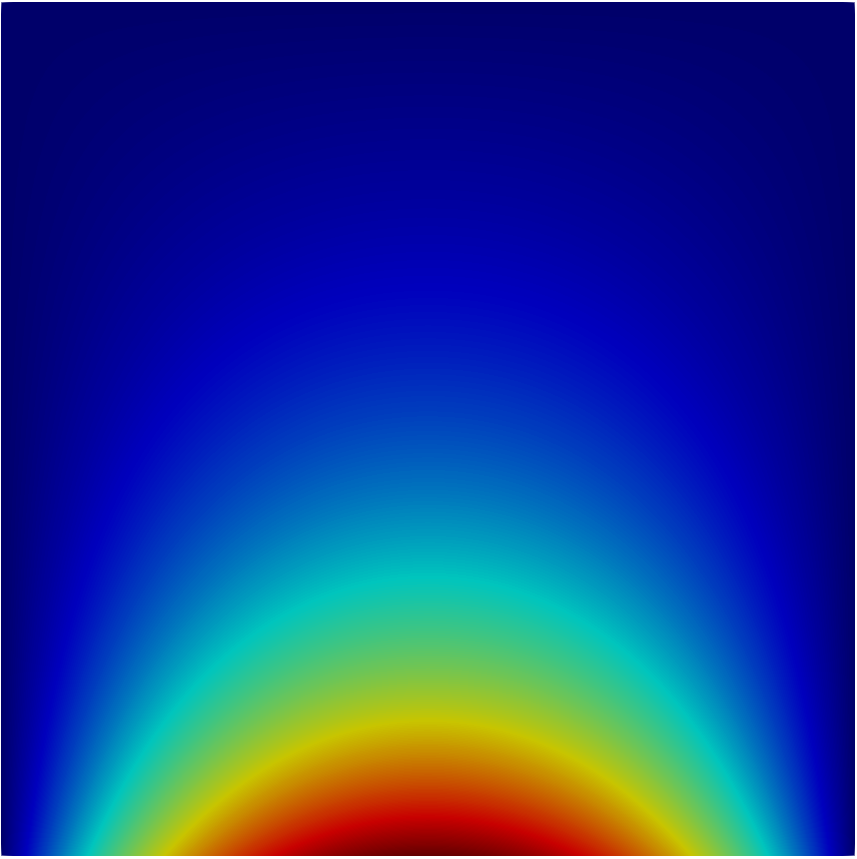}
    &\includegraphics[width=0.3\linewidth]{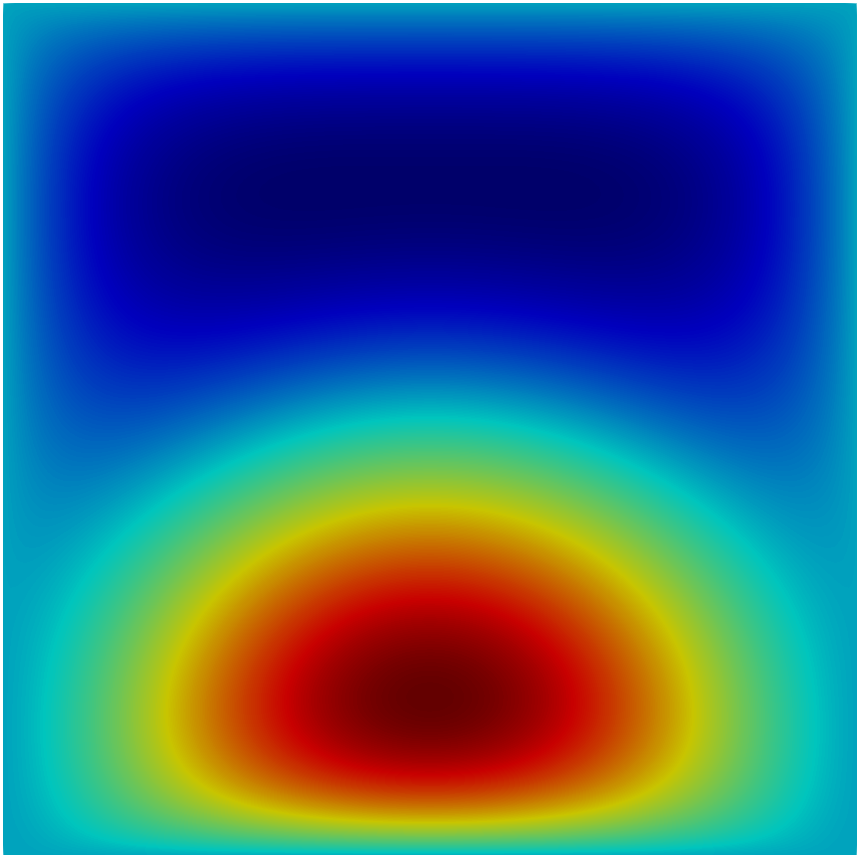}\\
    (a). $\hat{T}$ & (b). $T=\hat{T}+T_d$ &(c). $S$
    \end{tabular}
    \caption{Case\ref{Sect:Num-1}: Plot for the numerical solution on the initial domain $\Omega_0.$}
    \label{fig:Num-1-1}
\end{figure}

\begin{figure}[H]
    \centering
    \begin{tabular}{cccc}
         \includegraphics[width=0.21\linewidth]{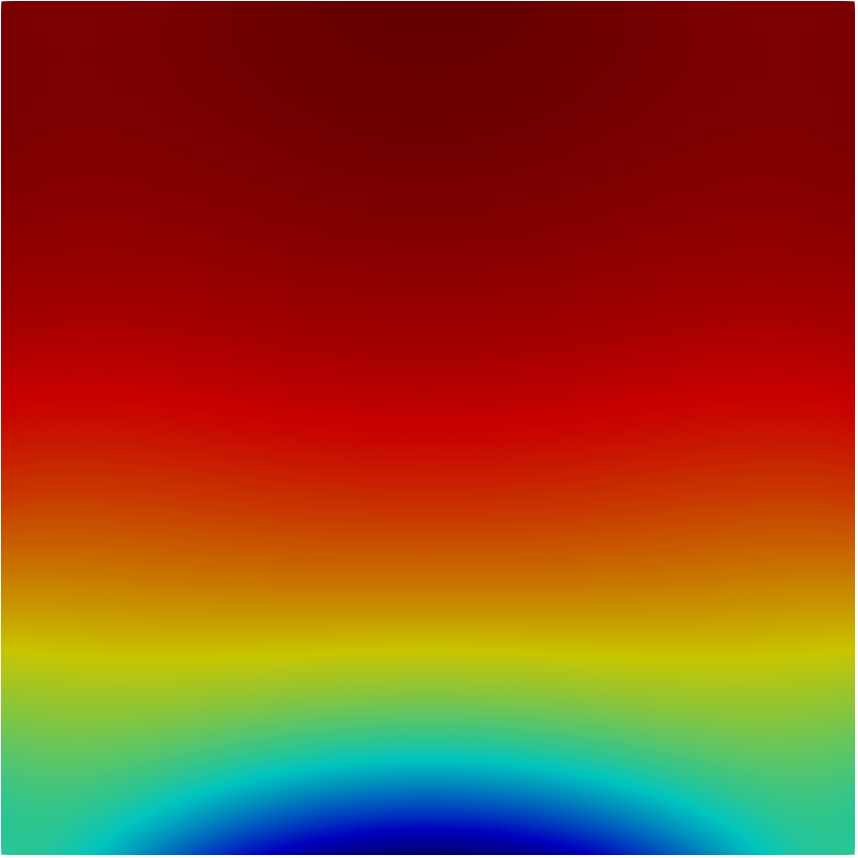}
    &\includegraphics[width=0.21\linewidth]{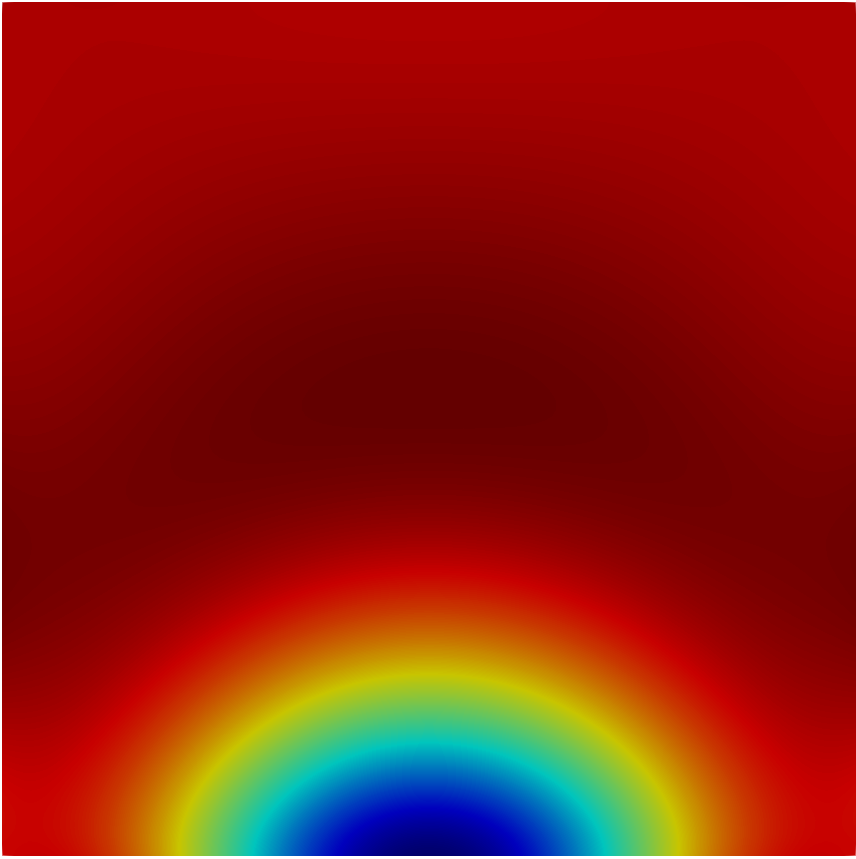}
        &\includegraphics[width=0.21\linewidth]{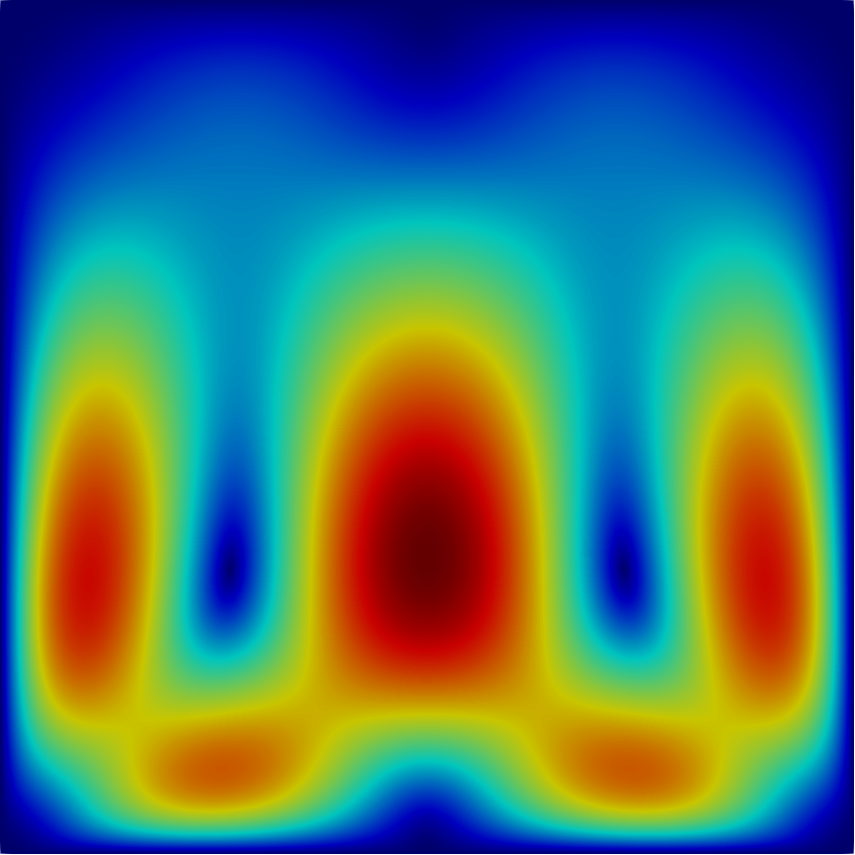}
   & \includegraphics[width=0.21\linewidth]{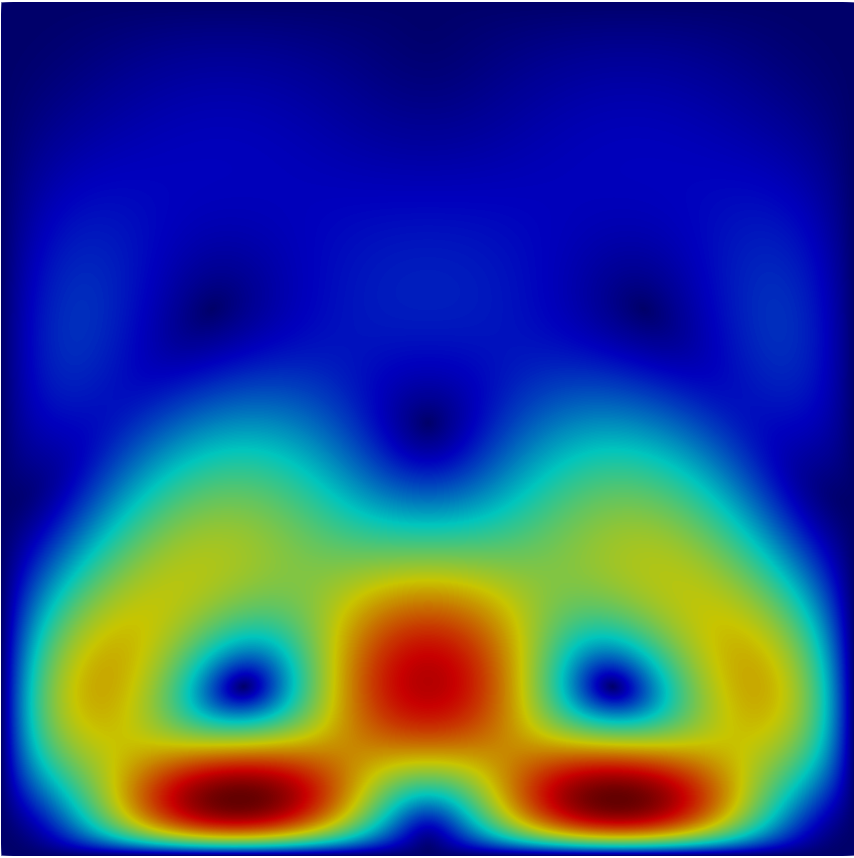}
      \\
  (a). $p$ & (b). $q$ &(c). $|{\bf v}|$ &(d). $|{\bf w}|$
    \end{tabular}
    \caption{Case\ref{Sect:Num-1}: Plot for the numerical solution on the initial domain $\Omega_0.$}
    \label{fig:Num-1-2}
\end{figure}

Firstly, we start with the optimization by a straight line bottom, which is given by $y = 0$. In the initial domain $\Omega_\gamma$, we calculate the numerical solutions corresponding to the primal system $({\bf v},p,T)$ and the adjoint system $({\bf w},q,S)$. The numerical solutions are plotted in Fig.~\ref{fig:Num-1-1}-Fig.~\ref{fig:Num-1-2}. In the initial state, the temperature $\hat{T}$ is enforced with a homogeneous Dirichlet boundary condition, as shown in Fig.~\ref{fig:Num-1-1}a. Since we lift the boundary condition and enforce the homogeneous boundary condition for $\hat{T}$,  the temperature of the bottom of the domain for the practical simulation is calculated by $\hat{T}+T_d$, which is plotted in Fig.~\ref{fig:Num-1-1}b. One can observe that high temperature happens in the center of the bottom to mimic a source in this location. Besides, the temperature corresponding to the adjoint system $S$ is plotted in Fig.~\ref{fig:Num-1-1}c. Again, the homogeneous Dirichlet boundary condition has been used for unknown $S$. For the two fluid equations, we plot the numerical solutions $({\bf v},p)$ and $({\bf w},q)$ in Fig.~\ref{fig:Num-1-2}. As expected, the flow ${\bf v}$ contributes to mix the liquid. 

\begin{figure}[H]
    \centering
    \includegraphics[width=0.8\linewidth]{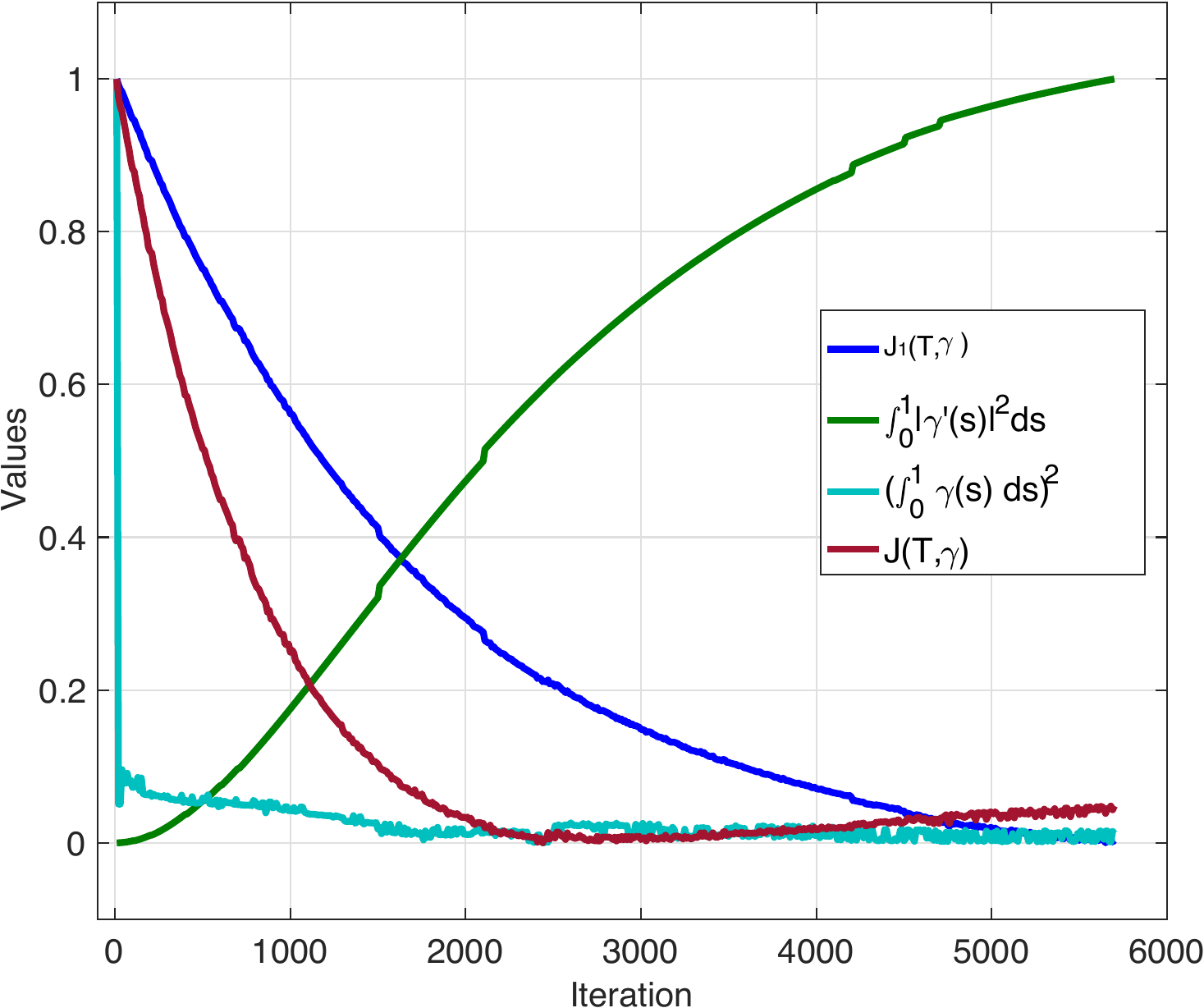}
    \caption{Case~\ref{Sect:Num-1}: Convergence test of the cost functional. 
    }
    \label{fig:Num-1-3}
\end{figure}
By using a gradient descent scheme, the boundary curve $\gamma_n$ will be updated at each iteration $n$. The cost functional has been plotted in Fig.~\ref{fig:Num-1-3}. In this plot, we scaled each term in the cost functional and re-arrange their values in the range $[0,1]$. In the first test, the cost functional $J_1$ is decaying at each iteration; however, the contribution from the curvature $\int_0^1|\gamma'(\xi)|^2d\xi$ is increasing, which means the optimization is changing the geometry of the straight line. Among all the iterations, the area of the domain $\Omega_\gamma$ remains the same. This can be justified by the quantity $(\int_0^1\gamma_n(\xi) d\xi)^2\approx 1E-9$ for all the iterations. We did not report the contribution from $\int_0^1(\gamma(\xi)-1-\nu)^{+2}d\xi$ since it takes value $0$ across all the iterations. Besides, we also observe the decaying of the total cost functional until Iter$=3000$. After this iteration, the cost functional is straggling to balance each term in the scheme, but stays almost the same value without significant decreasing despite decaying of $J_1$.

\begin{figure}[H]
    \centering
    \includegraphics[width=0.8\linewidth]{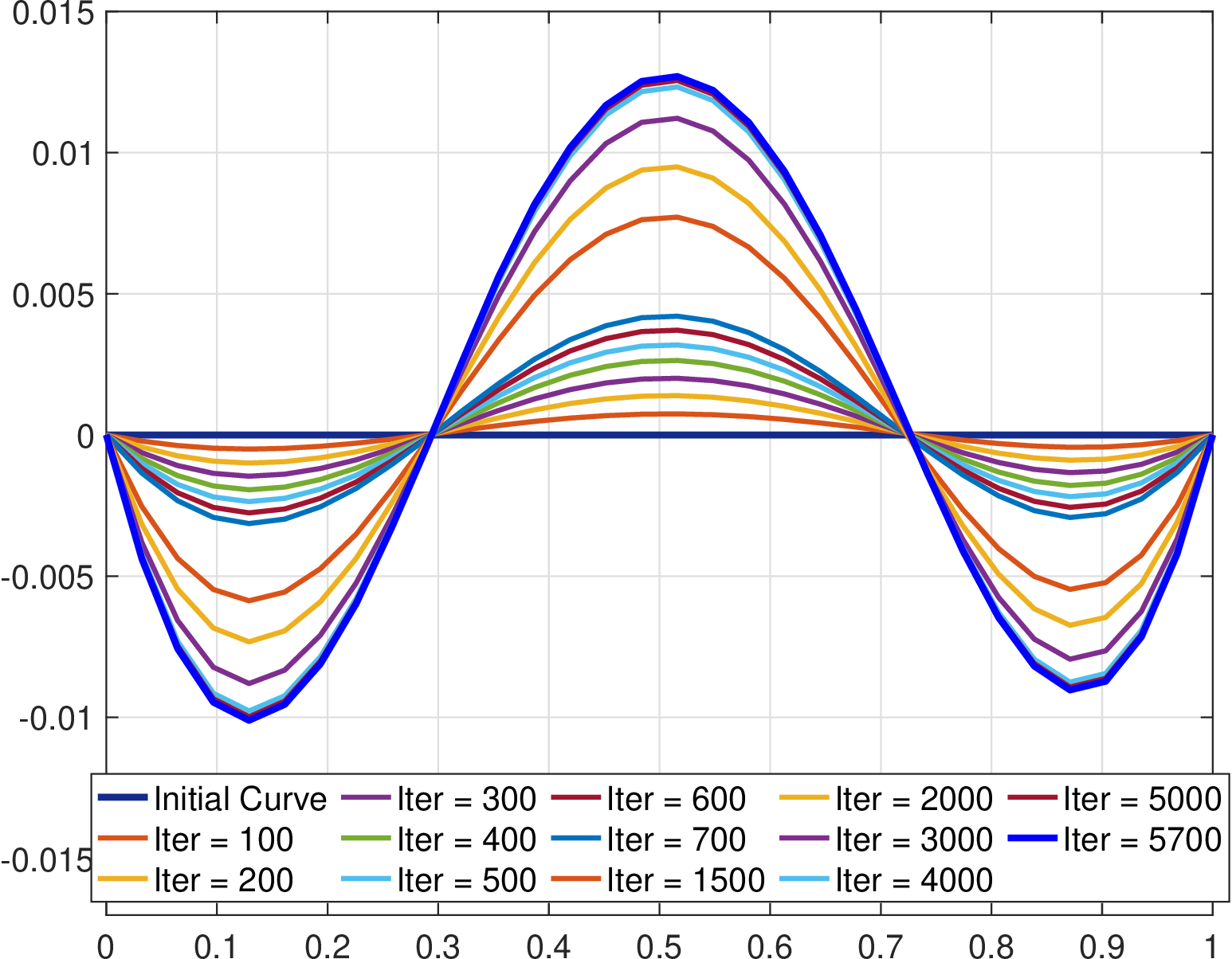}
    \caption{Case\ref{Sect:Num-1}: Plot of optimized curves.}
    \label{fig:Num-1-4}
\end{figure}

\begin{figure}[H]
    \centering
    \begin{tabular}{ccc}
    \includegraphics[width=0.3\linewidth]{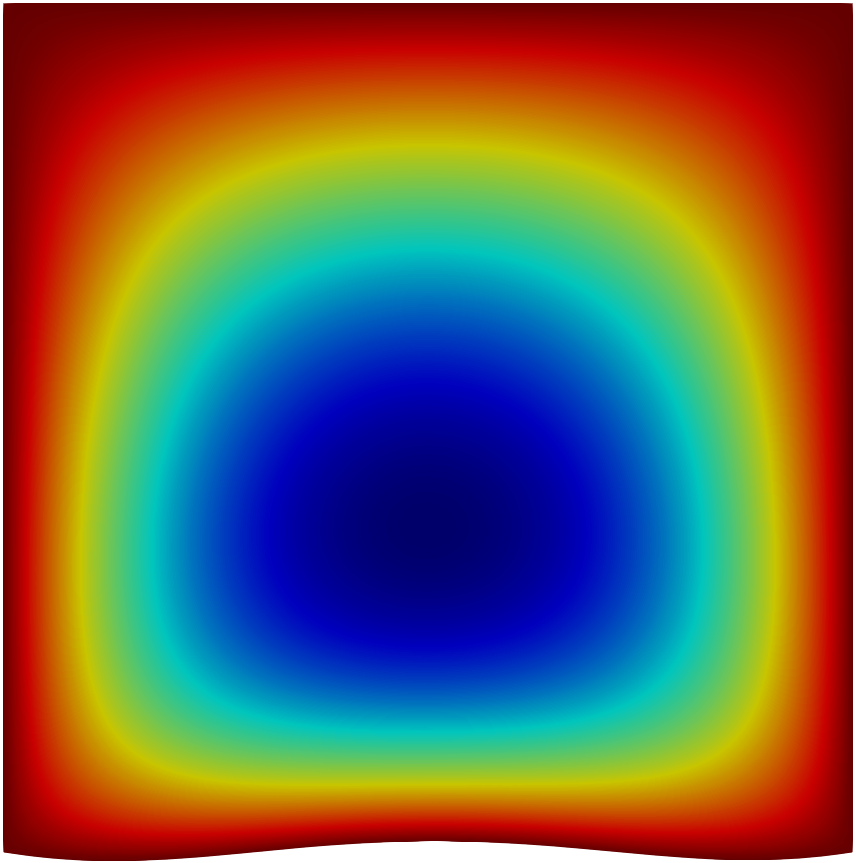}
    &\includegraphics[width=0.3\linewidth]{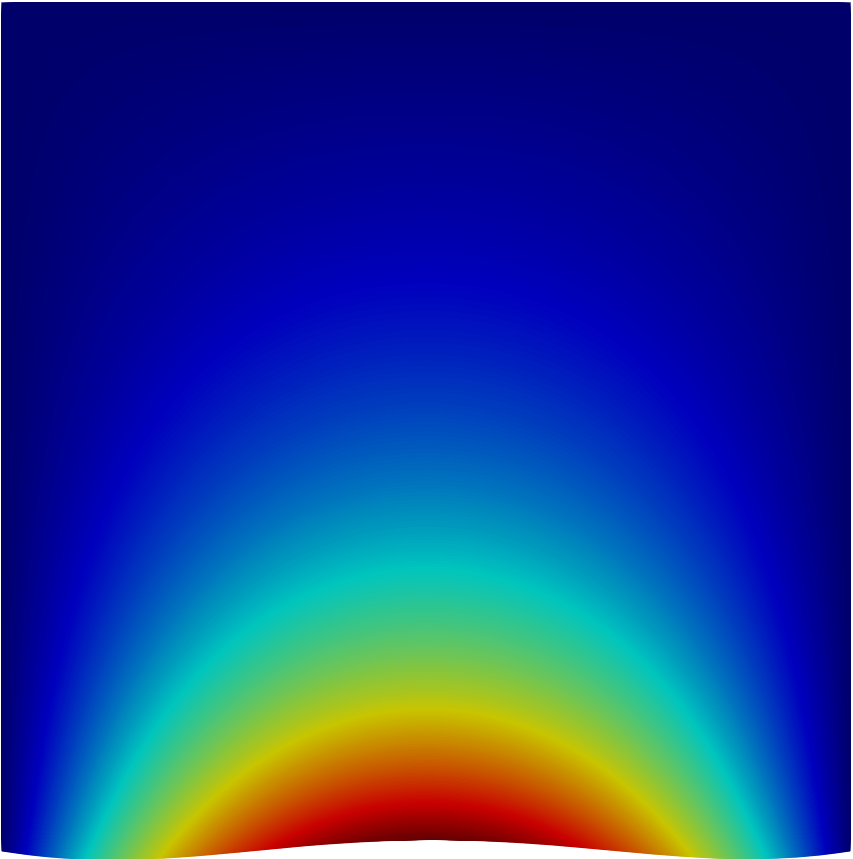}
    &\includegraphics[width=0.3\linewidth]{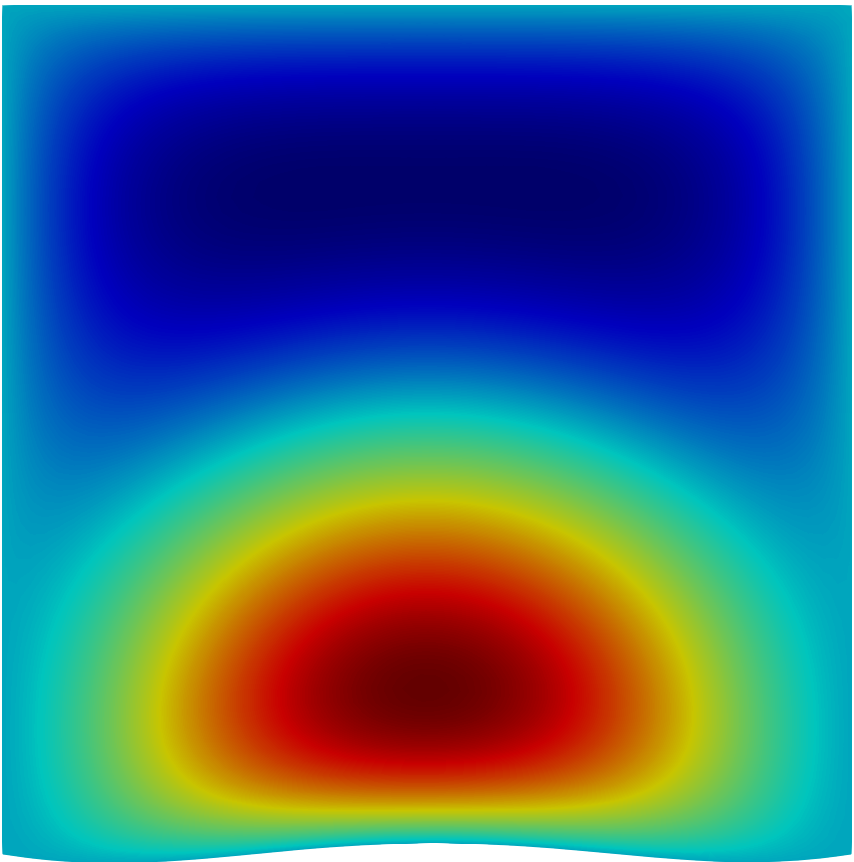}\\
    (a). $\hat{T}$ & (b). $T=\hat{T}+T_d$ &(c). $S$
    \end{tabular}
    \caption{Case\ref{Sect:Num-1}: Plot for the numerical solution on the final domain $\Omega_{5700}.$}
    \label{fig:Num-1-5}
\end{figure}

The curves $\gamma_n$ have been plotted in Fig.~\ref{fig:Num-1-4} for several iterations. It shows that the optimization is trying to lift the straight line and the lifting is converging to the blue curve. We plot the numerical solutions for the final domain $\Omega_{5700}$ in Fig.~\ref{fig:Num-1-5}-Fig.~\ref{fig:Num-1-6}.

\begin{figure}[H]
    \centering
    \begin{tabular}{cccc}
         \includegraphics[width=0.21\linewidth]{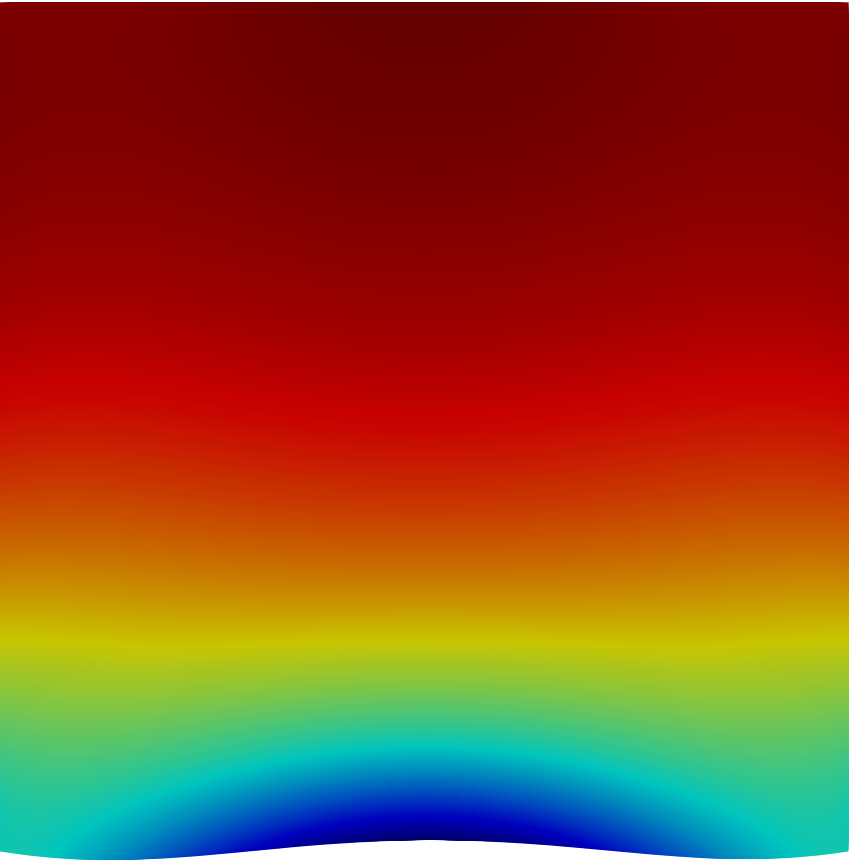}
    &\includegraphics[width=0.21\linewidth]{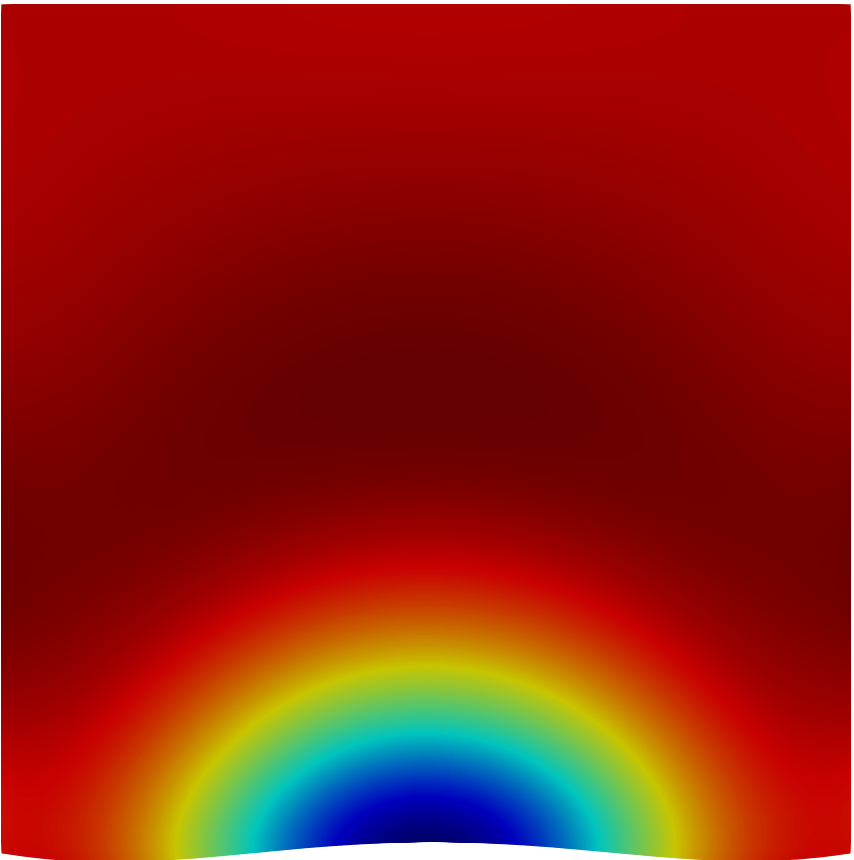}
        &\includegraphics[width=0.21\linewidth]{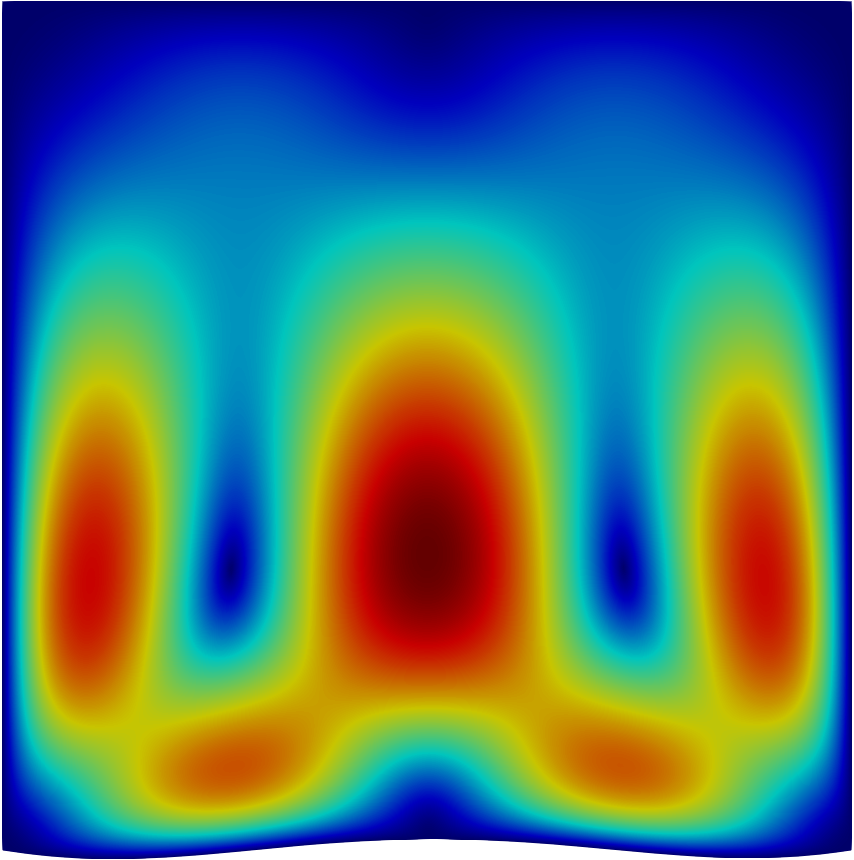}
   & \includegraphics[width=0.21\linewidth]{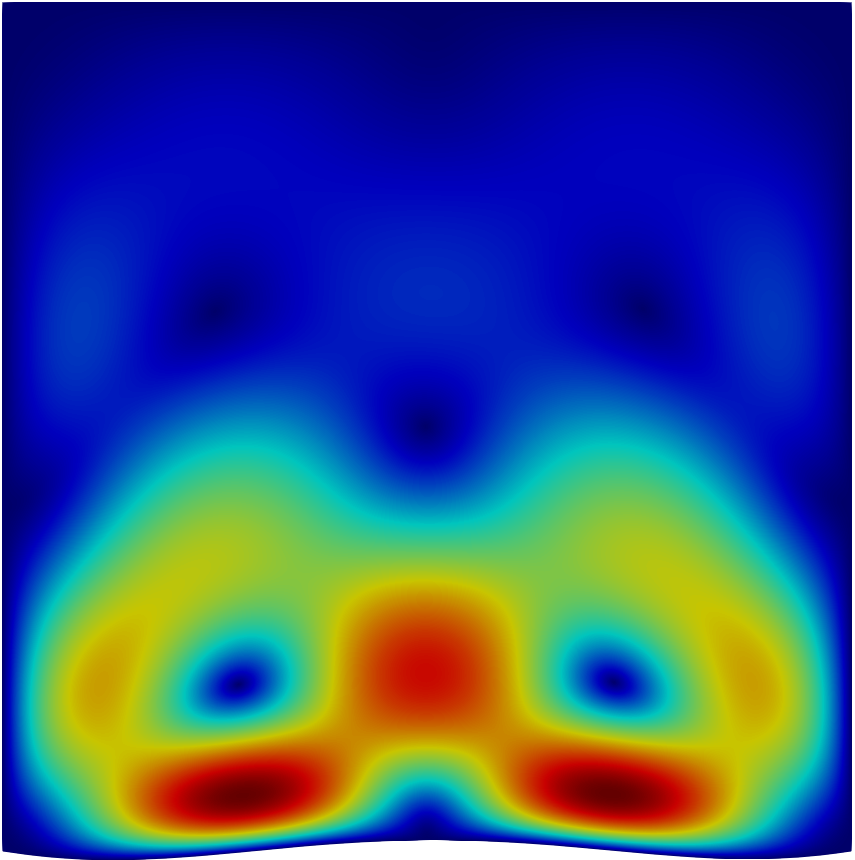}
      \\
  (a). $p$ & (b). $q$ &(c). $|{\bf v}|$ &(d). $|{\bf w}|$
    \end{tabular}
    \caption{Case\ref{Sect:Num-1}: Plot for the numerical solution on the final domain $\Omega_{5700}.$}
    \label{fig:Num-1-6}
\end{figure}

\subsection{Case 2: initial state with $\gamma: y = -0.1\sin(3\pi x)$}\label{Sect:Num-2}
As shown in the above test, the final curve is approaching to a $\sin$ curve and we shall try the initial curve $\gamma_0$ in the gradient descent scheme. The numerical solutions have been plotted in Fig.~\ref{fig:Num-2-1}-Fig.~\ref{fig:Num-2-2}. Similar conclusions as Section~\ref{Sect:Num-1} can be obtained.

\begin{figure}[H]
    \centering
    \begin{tabular}{ccc}
    \includegraphics[width=0.3\linewidth]{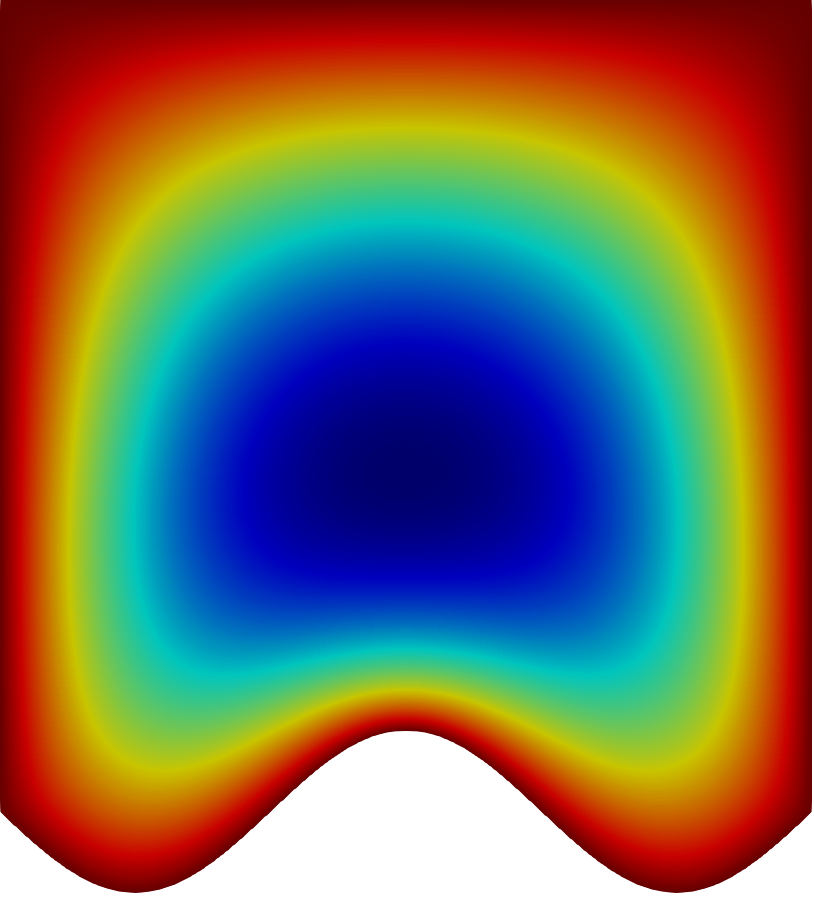}
    &\includegraphics[width=0.3\linewidth]{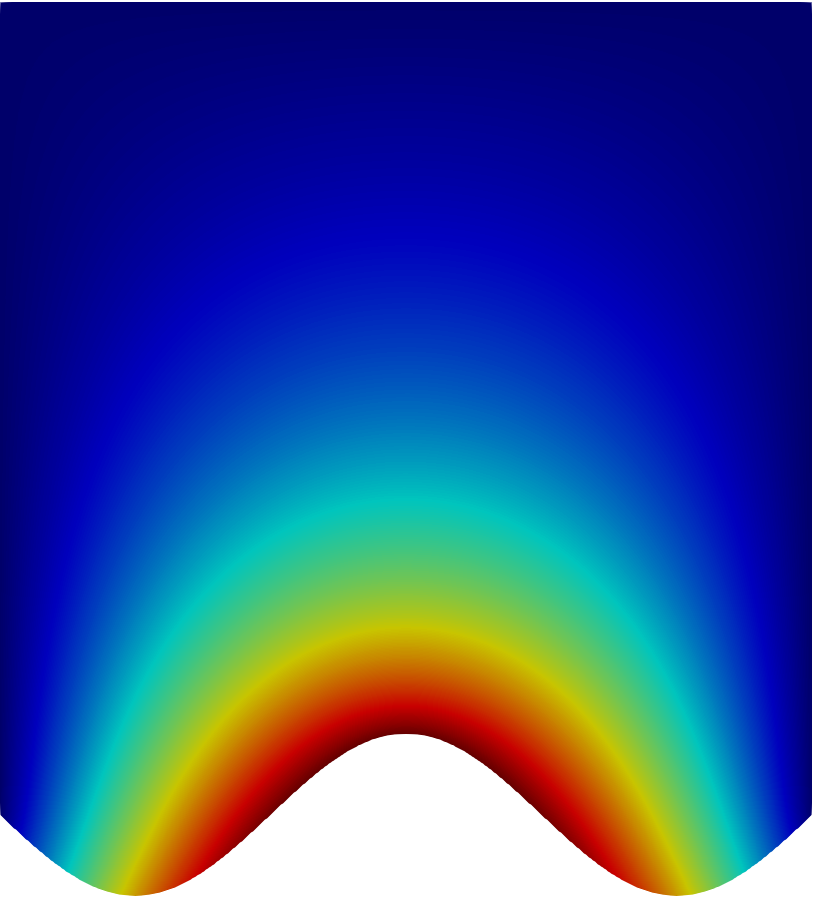}
    &\includegraphics[width=0.3\linewidth]{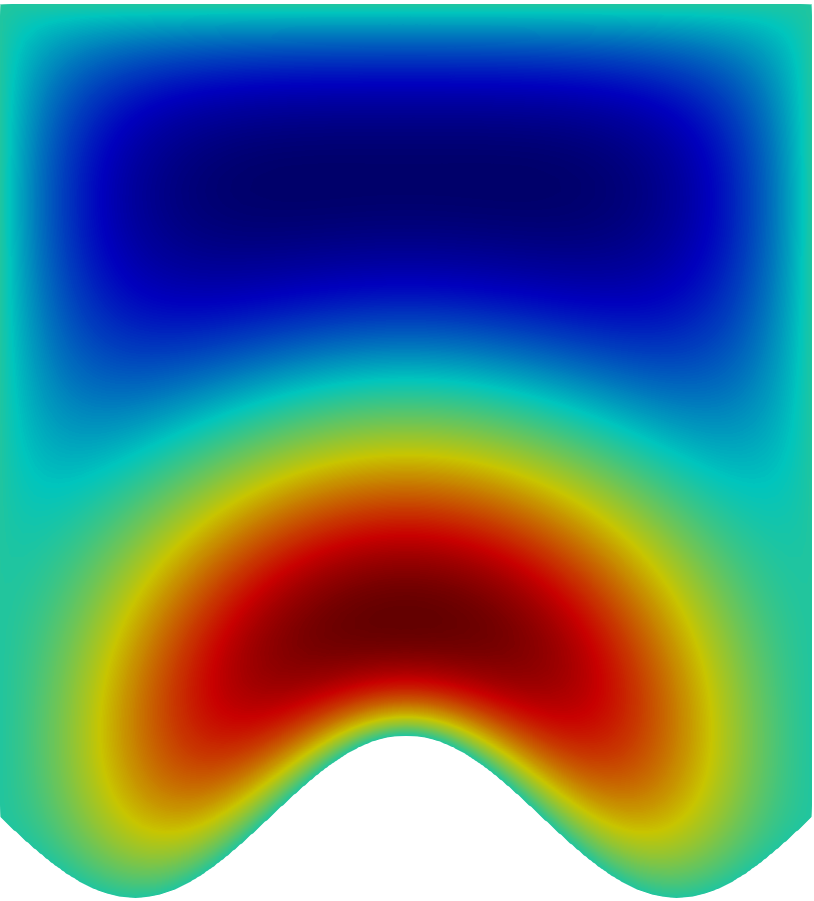}
    \\
    (a). $\hat{T}$ & (b). $T=\hat{T}+T_d$ &(c). $S$
    \end{tabular}
    \caption{Case\ref{Sect:Num-2}: Plot for the numerical solution on the initial domain $\Omega_0.$}
    \label{fig:Num-2-1}
\end{figure}

\begin{figure}[H]
    \centering
    \begin{tabular}{cccc}
         \includegraphics[width=0.21\linewidth]{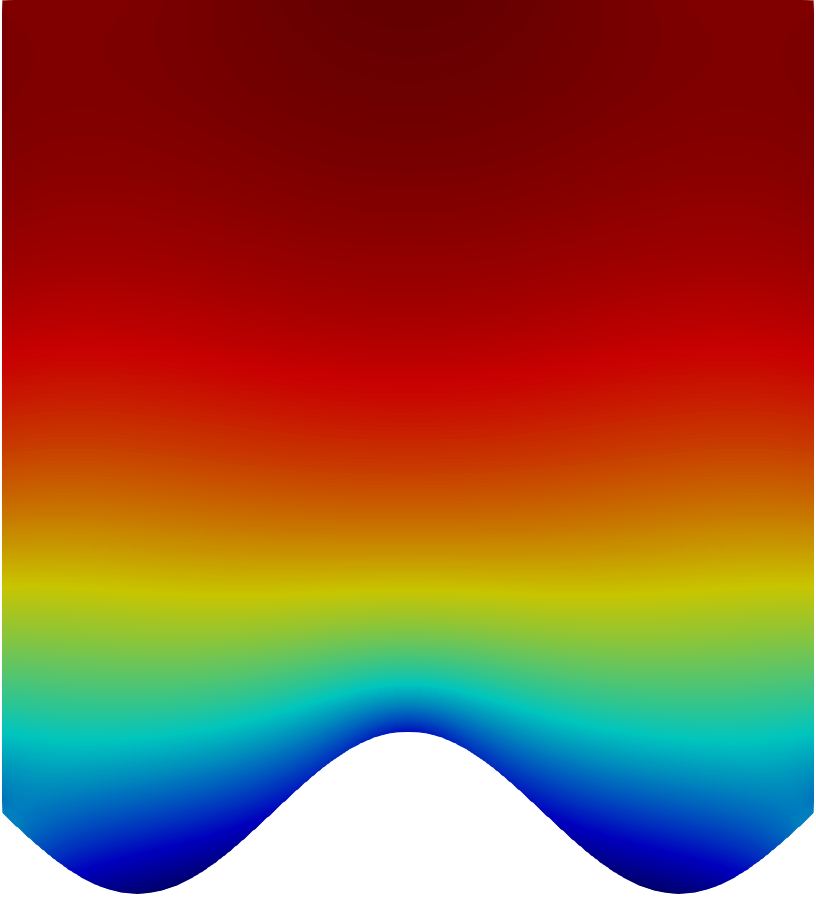}
    &\includegraphics[width=0.21\linewidth]{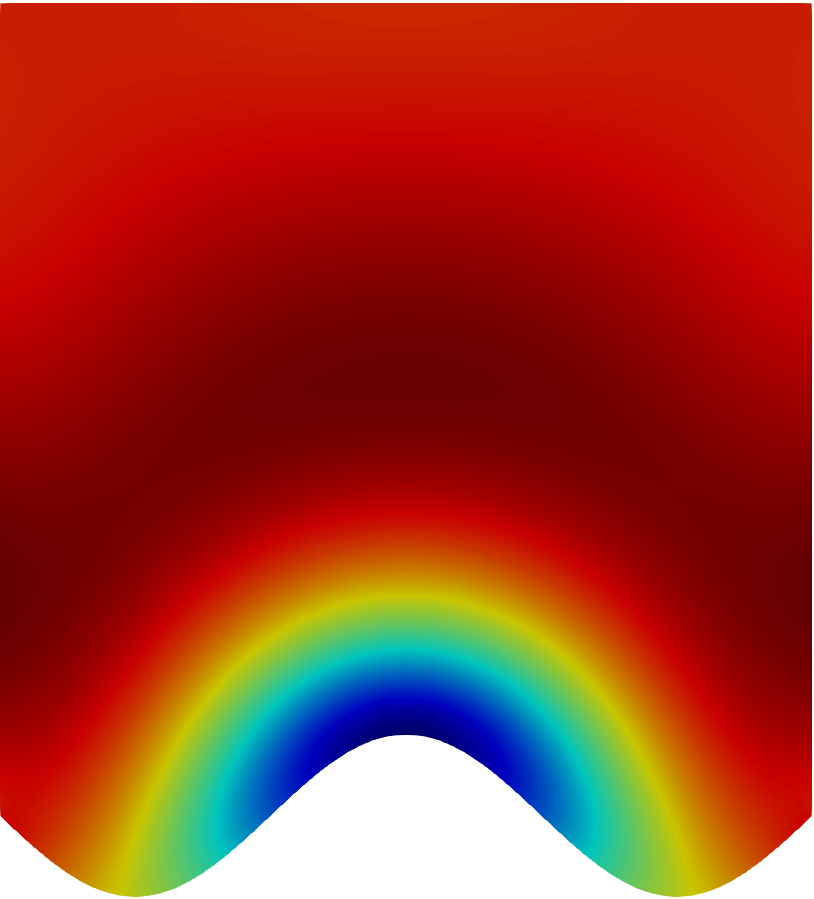}
        &\includegraphics[width=0.21\linewidth]{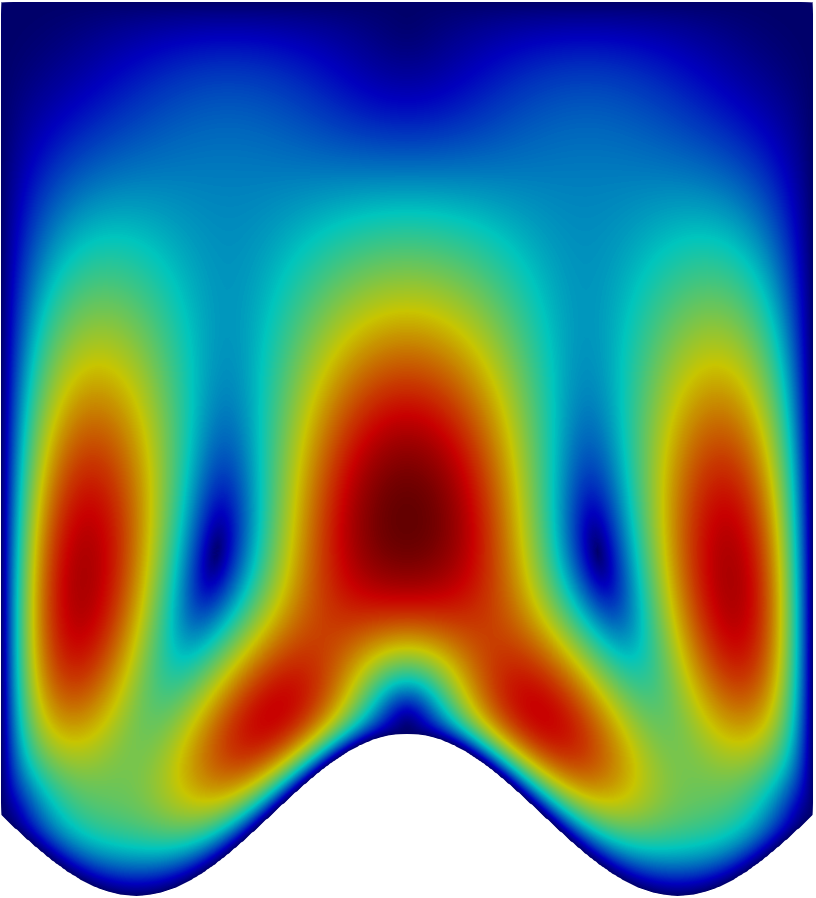}
   & \includegraphics[width=0.21\linewidth]{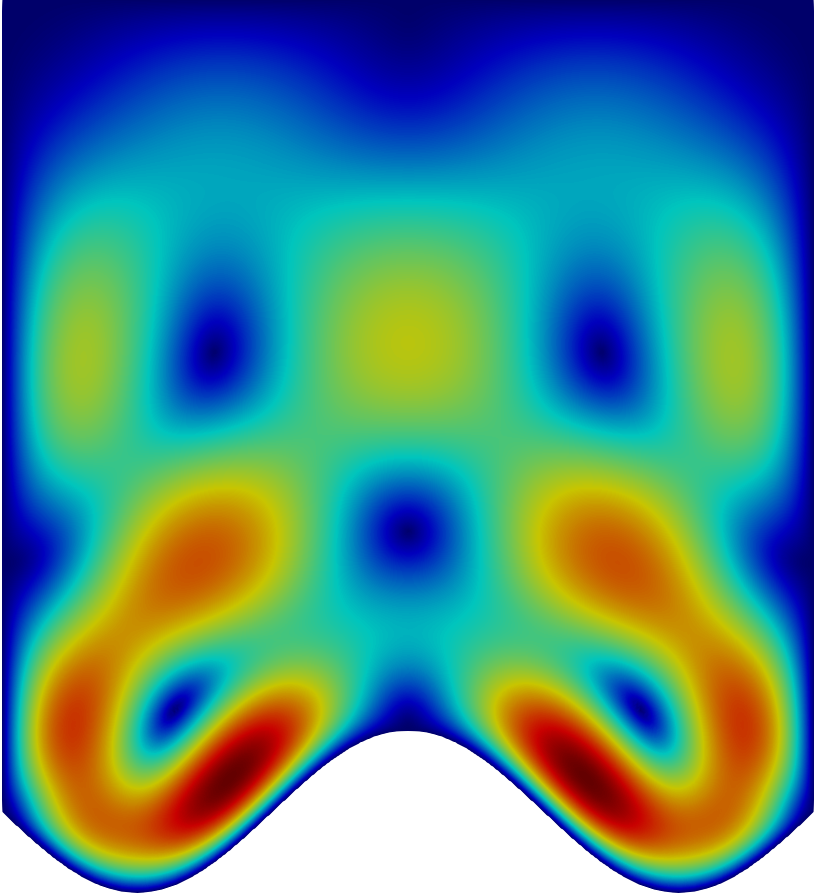}
      \\
  (a). $p$ & (b). $q$ &(c). $|{\bf v}|$ &(d). $|{\bf w}|$
    \end{tabular}
    \caption{Case\ref{Sect:Num-2}: Plot for the numerical solution on the initial domain $\Omega_0.$}
    \label{fig:Num-2-2}
\end{figure}

The decaying behavior for the cost functional at each iterations has been plotted in Fig.~\ref{fig:Num-2-3}. In contrast to Fig.~\ref{fig:Num-1-3}, the cost functional $J_1$ is increasing but the curvature contribution is decreasing. Also the total cost functional $J$ is decreasing until Iter$=4000$. It shows that our proposed cost functional $J$ is effective to balance each term. 

\begin{figure}[H]
    \centering
    \includegraphics[width=0.8\linewidth]{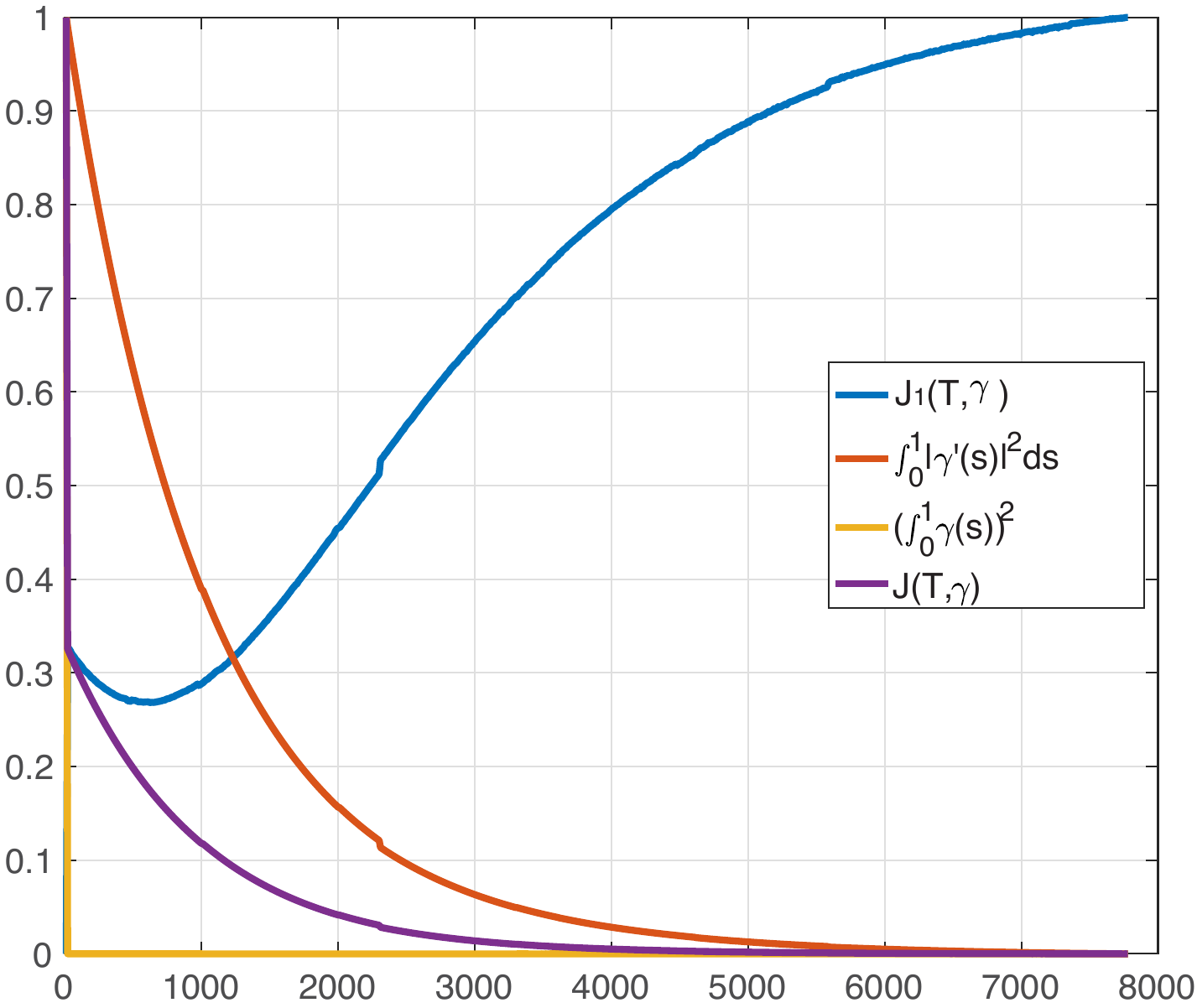}
    \caption{Case~\ref{Sect:Num-2}: Convergence test of the cost functional.
    }
    \label{fig:Num-2-3}
\end{figure}

Then, we plot the curves $\gamma_n$ for several iterations $n$ in Fig.~\ref{fig:Num-2-4}. In this test, the optimization algorithm is straightening the curve bottom to the blue curve with lower amplitude. In addition, the convergence behavior can also be observed in this figure.

\begin{figure}[H]
    \centering
    \includegraphics[width=0.8\linewidth]{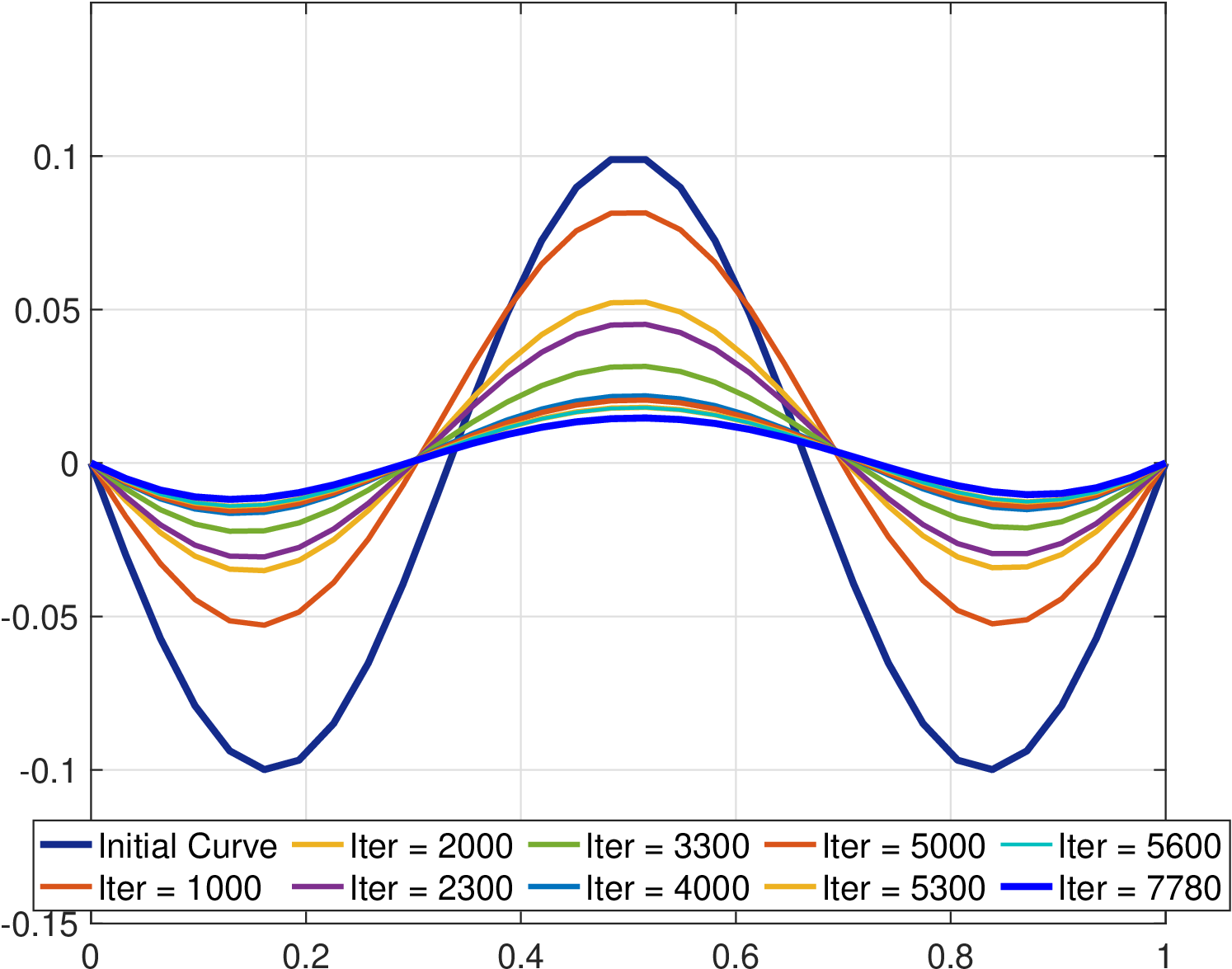}
    \caption{Case\ref{Sect:Num-2}: Plot of optimized curves.}
    \label{fig:Num-2-4}
\end{figure}

Then the numerical solutions on the final domain $\Omega_{5600}$ have been plotted in Fig.~\ref{fig:Num-2-5}-Fig.~\ref{fig:Num-2-6}.
\begin{figure}[H]
    \centering
    \begin{tabular}{ccc}
    \includegraphics[width=0.3\linewidth]{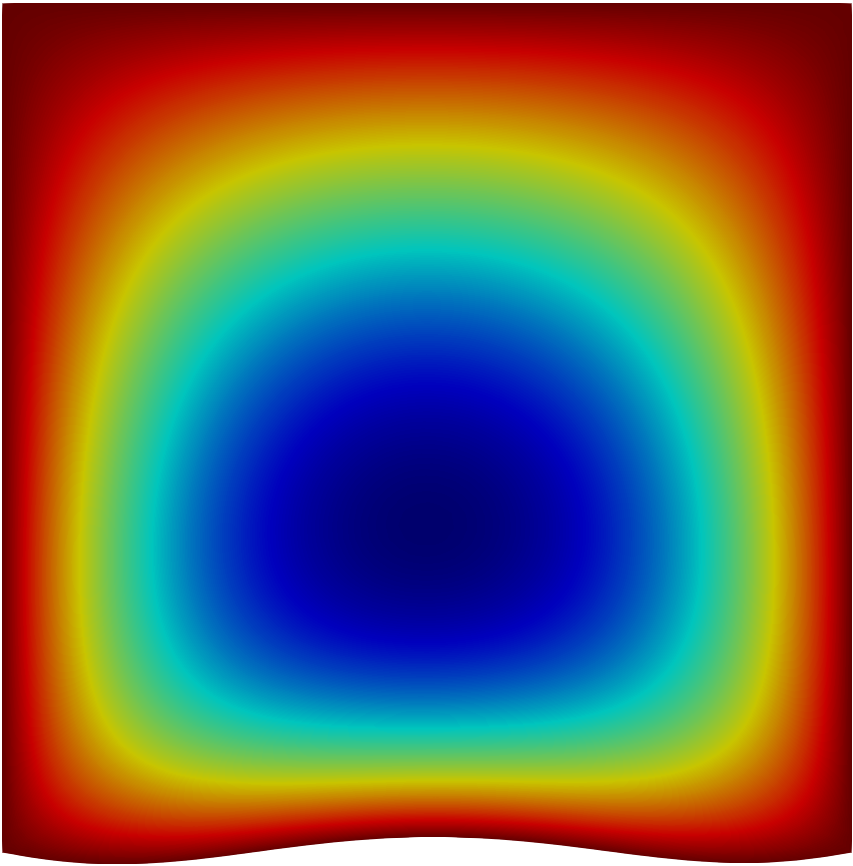}
    &\includegraphics[width=0.3\linewidth]{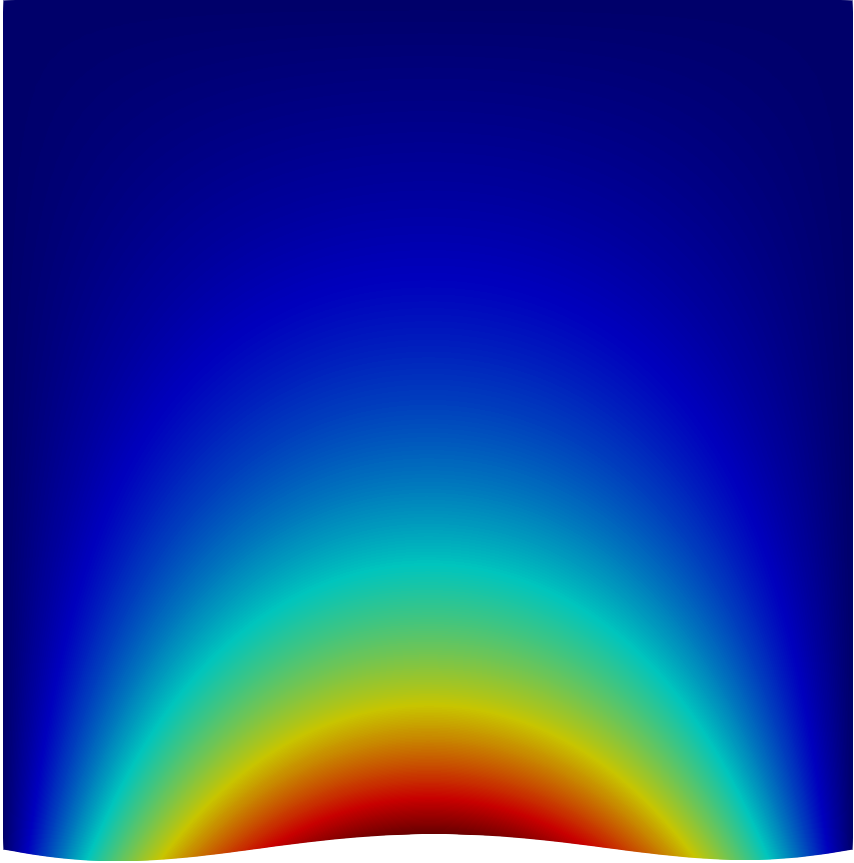}
    &\includegraphics[width=0.3\linewidth]{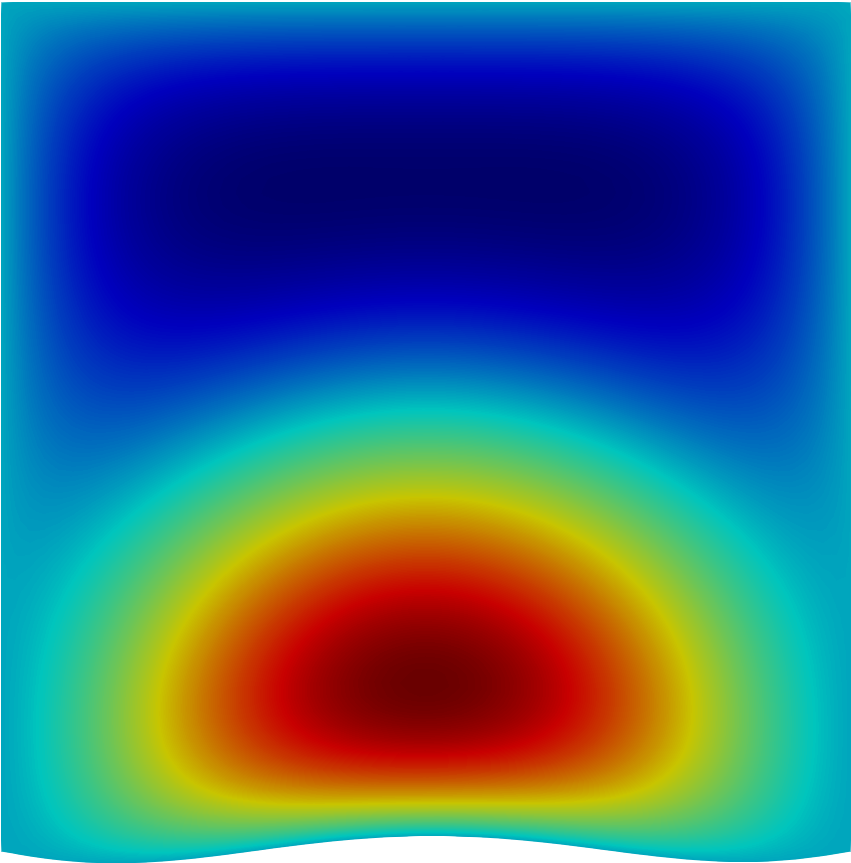}\\
    (a). $\hat{T}$ & (b). $T=\hat{T}+T_d$ & (c). $S$
    \end{tabular}
    \caption{Case\ref{Sect:Num-2}: Plot for the numerical solution on the final domain $\Omega_{5600}.$}
    \label{fig:Num-2-5}
\end{figure}

\begin{figure}[H]
    \centering
    \begin{tabular}{cccc}
         \includegraphics[width=0.21\linewidth]{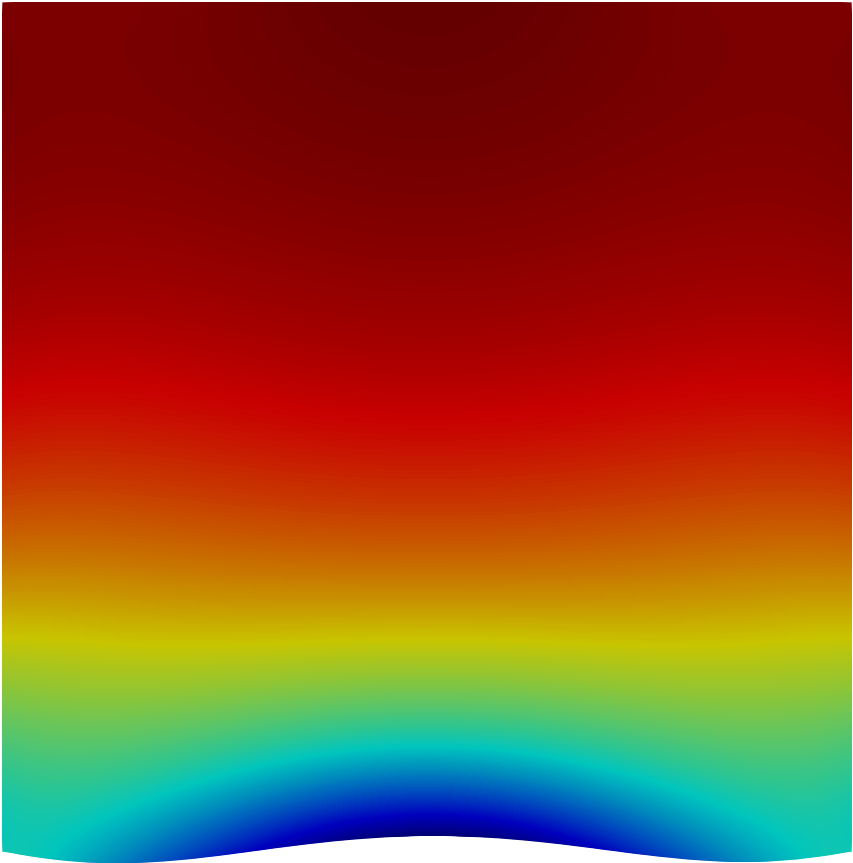}
    &\includegraphics[width=0.21\linewidth]{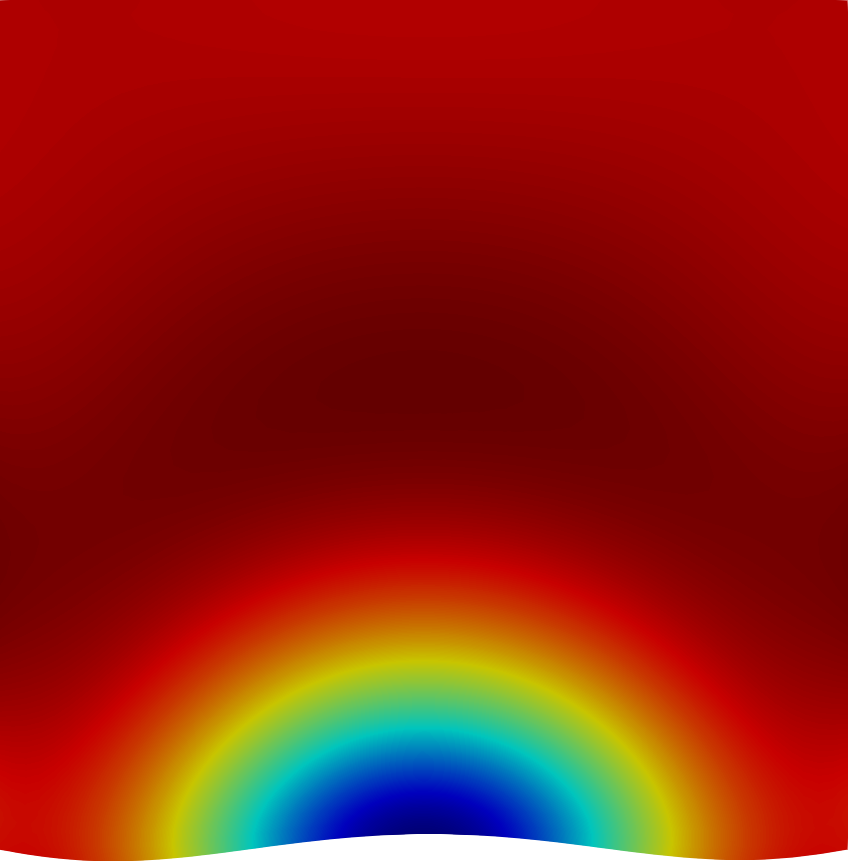}
        &\includegraphics[width=0.21\linewidth]{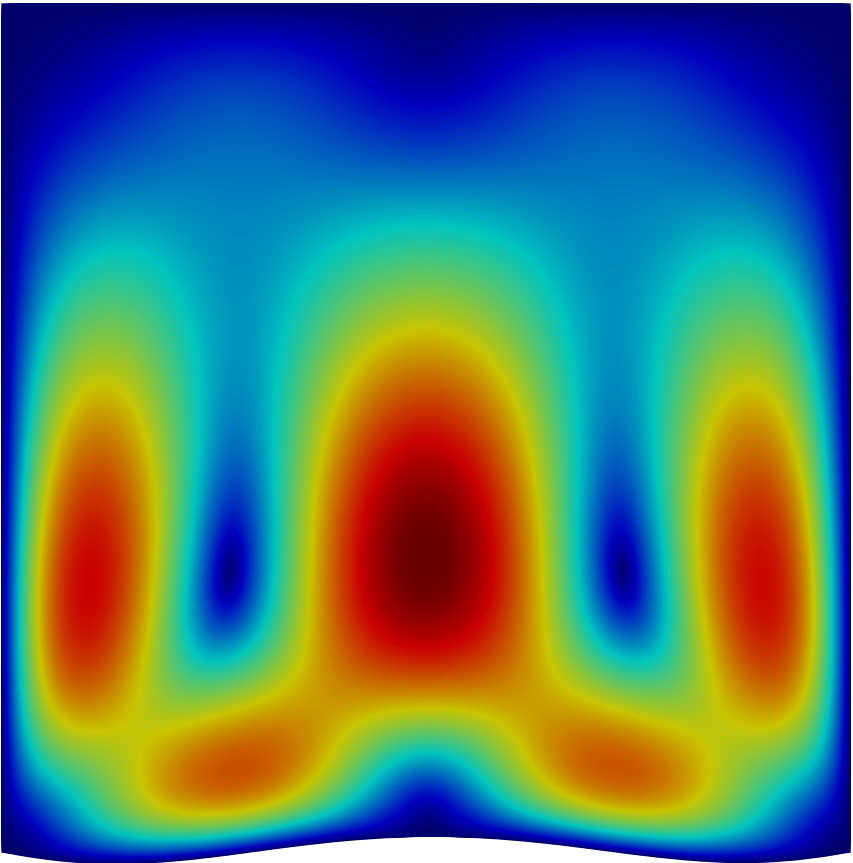}
   & \includegraphics[width=0.21\linewidth]{./figure/Test1_wmag_final}
      \\
  (a). $p$ & (b). $q$ &(c). $|{\bf v}|$ &(d). $|{\bf w}|$
    \end{tabular}
    \caption{Case\ref{Sect:Num-2}: Plot for the numerical solution on the final domain $\Omega_{5600}.$}
    \label{fig:Num-2-6}
\end{figure}

\subsection{Case 3: initial state with $\gamma: y = -0.1\sin(5\pi x)$}\label{Sect:Num-3}

In this test, we shall start with a curvy bottom with more oscillations $\gamma_0 = -0.1\sin(5\pi x)$. The numerical solutions have been plotted in Fig.~\ref{fig:Num-3-1}-Fig.~\ref{fig:Num-3-2}. The plot of numerical solutions shows similar patterns as before.

\begin{figure}[H]
    \centering
    \begin{tabular}{ccc}
    \includegraphics[width=0.3\linewidth]{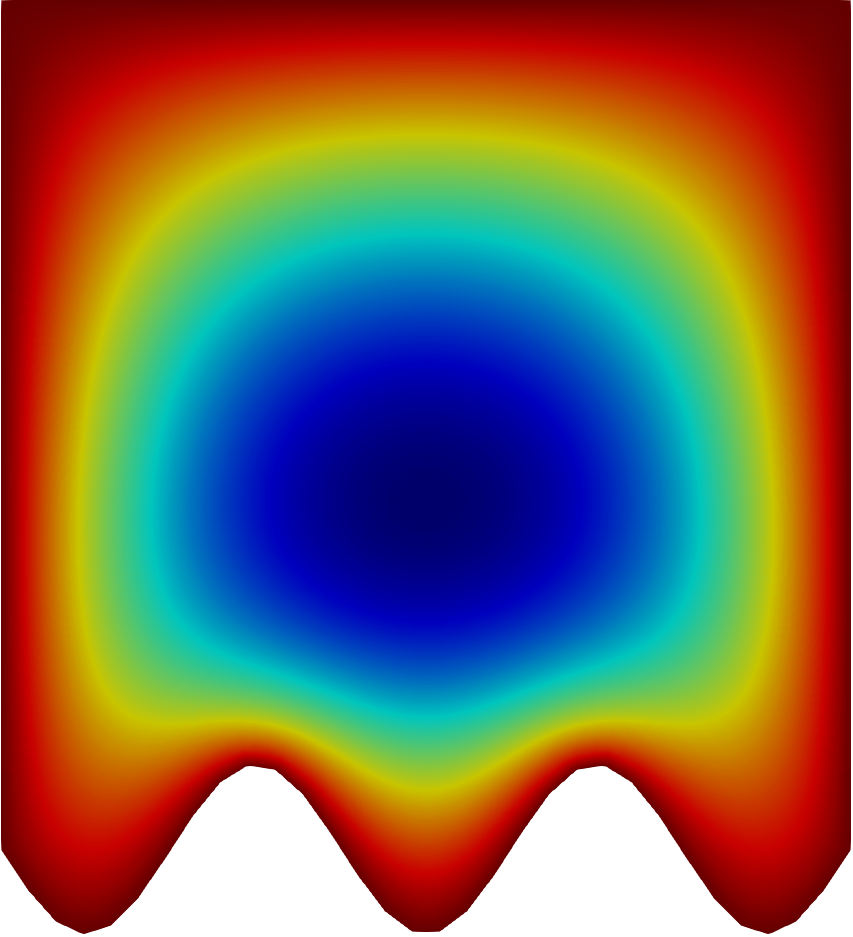}
    &\includegraphics[width=0.3\linewidth]{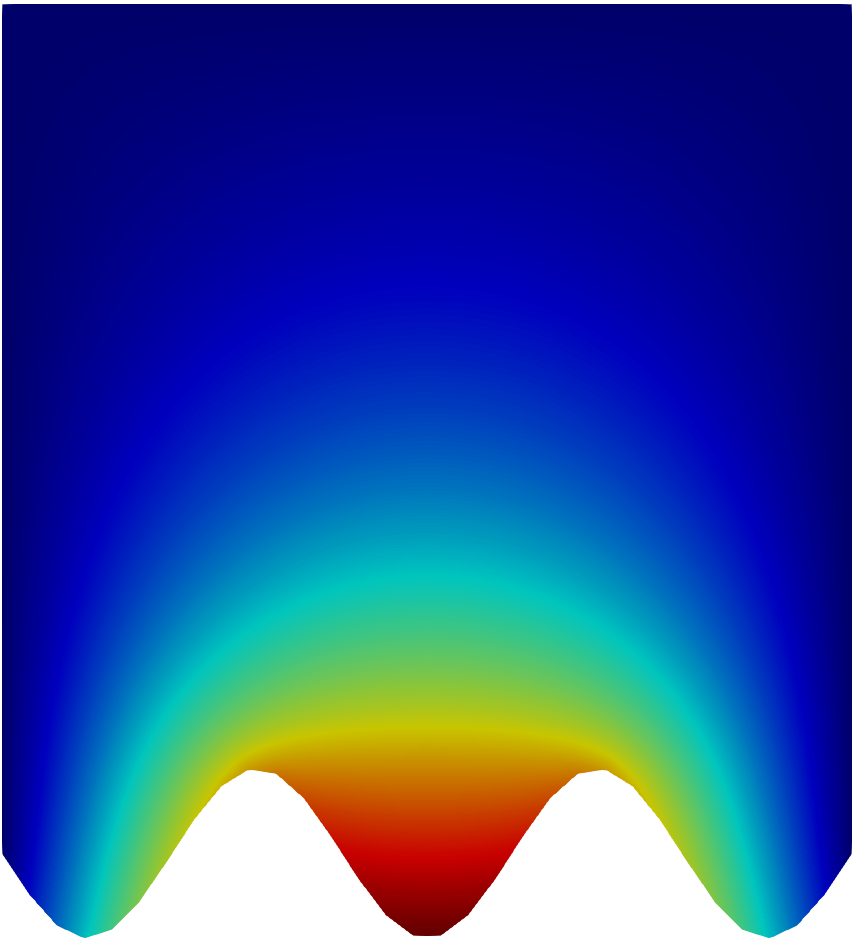}
    &\includegraphics[width=0.3\linewidth]{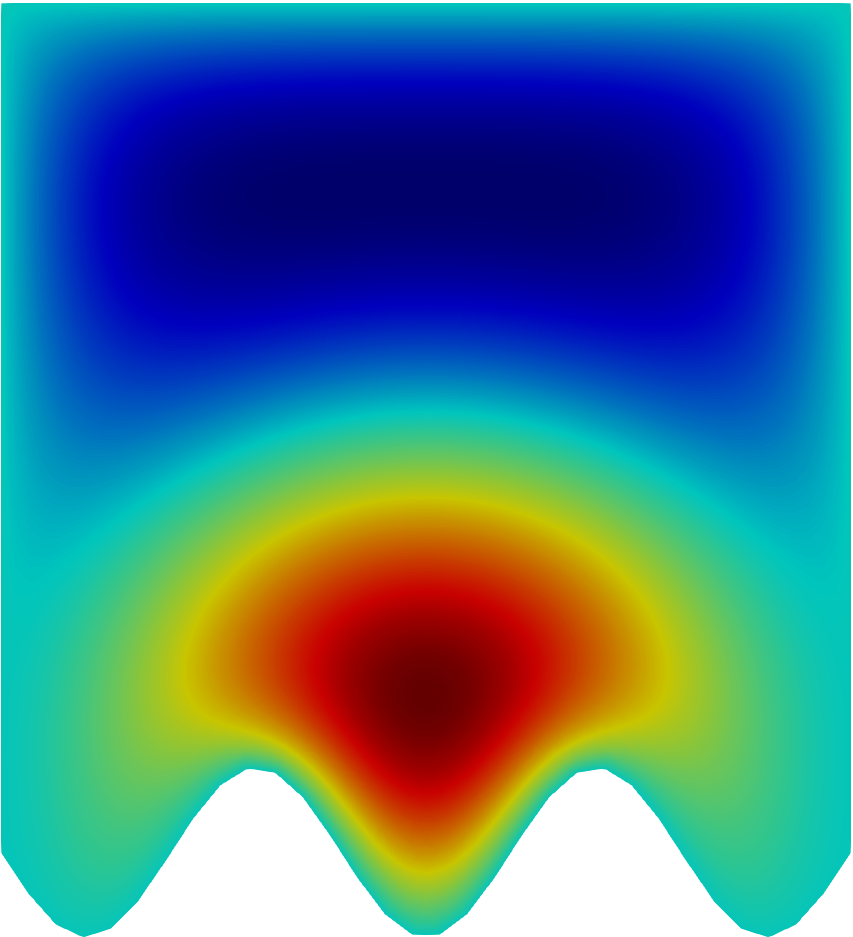}
    \\
    (a). $\hat{T}$ & (b). $T = \hat{T} + T_d$ &(c). $S$
    \end{tabular}
    \caption{Case\ref{Sect:Num-3}: Plot for the numerical solution on the initial domain $\Omega_0.$}
    \label{fig:Num-3-1}
\end{figure}

\begin{figure}[H]
    \centering
    \begin{tabular}{cccc}         \includegraphics[width=0.21\linewidth]{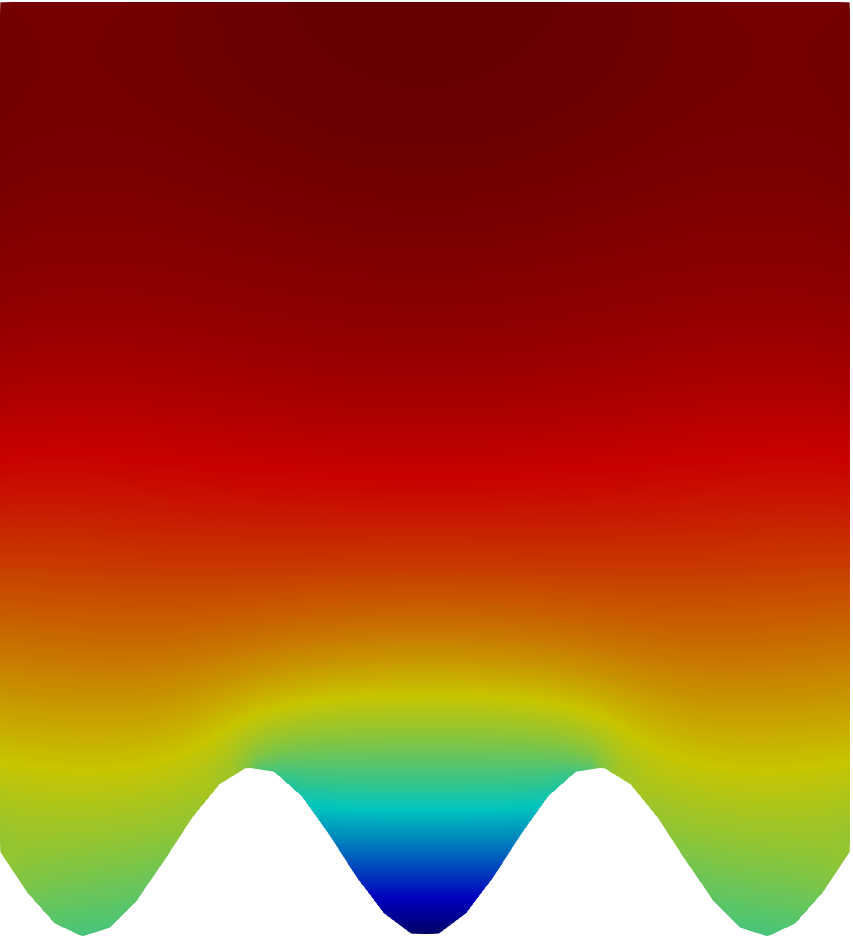}
    &\includegraphics[width=0.21\linewidth]{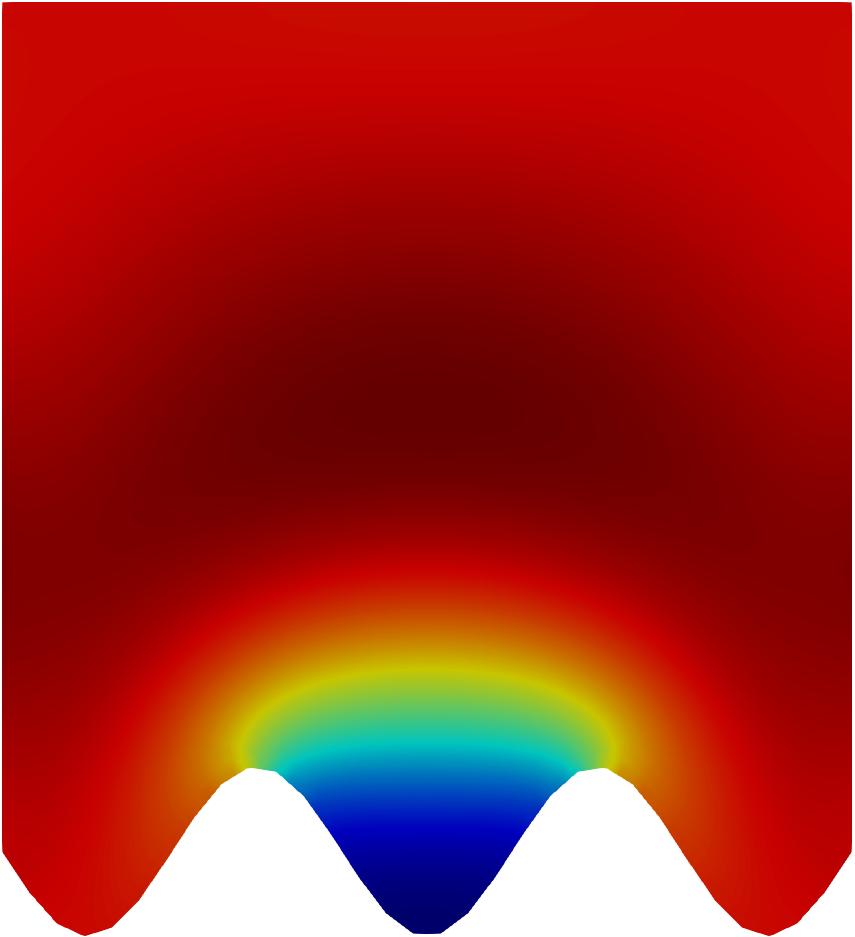}
 &\includegraphics[width=0.21\linewidth]{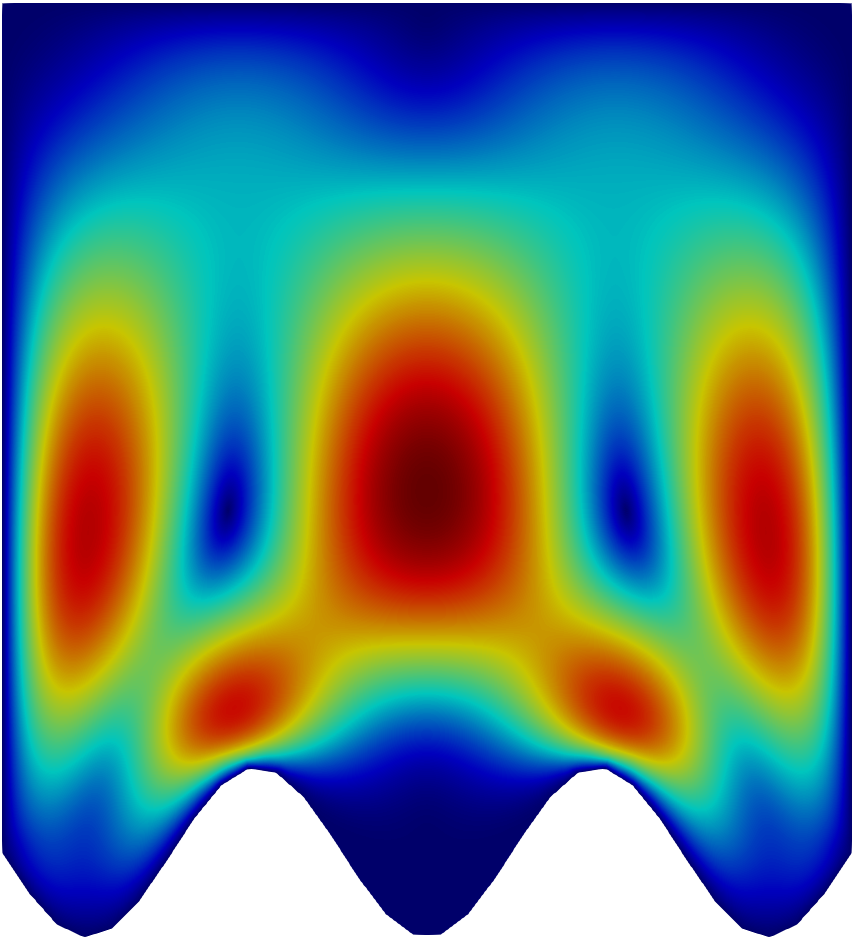}
   & \includegraphics[width=0.21\linewidth]{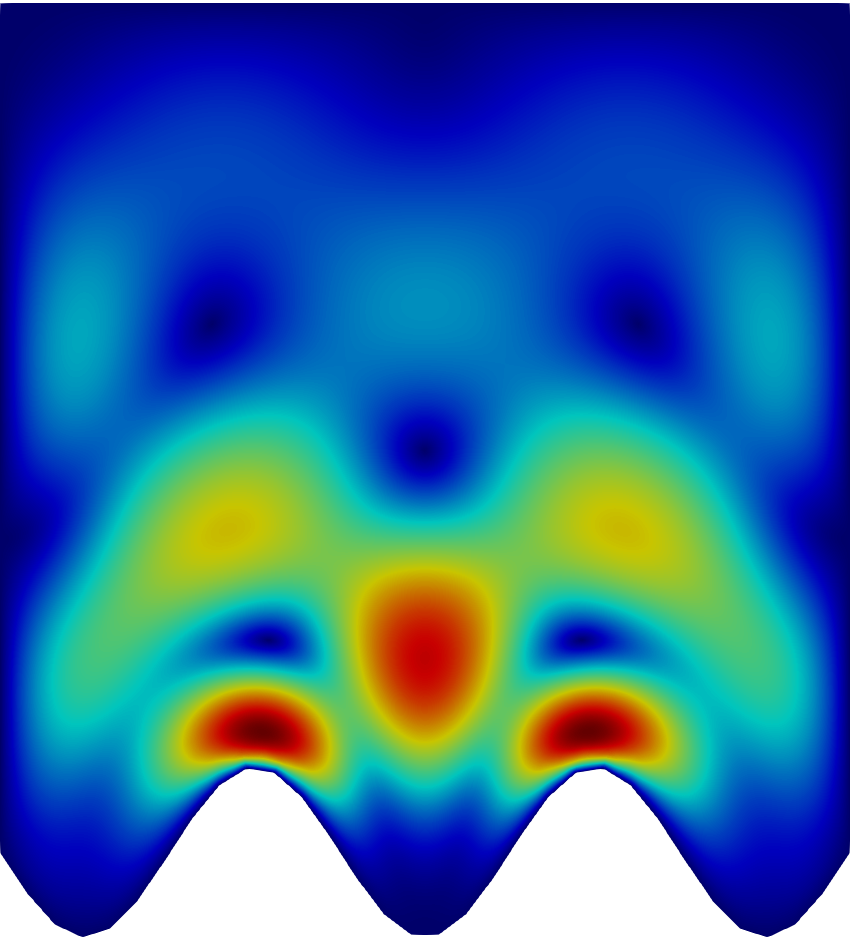}
      \\
  (a). $p$ & (b). $q$ &(c). $|{\bf v}|$ &(d). $|{\bf w}|$
    \end{tabular}
    \caption{Case\ref{Sect:Num-3}: Plot for the numerical solution on the initial domain $\Omega_0.$}
    \label{fig:Num-3-2}
\end{figure}

\begin{figure}[H]
    \centering
    \begin{tabular}{cc}
    \includegraphics[width=0.45\linewidth]{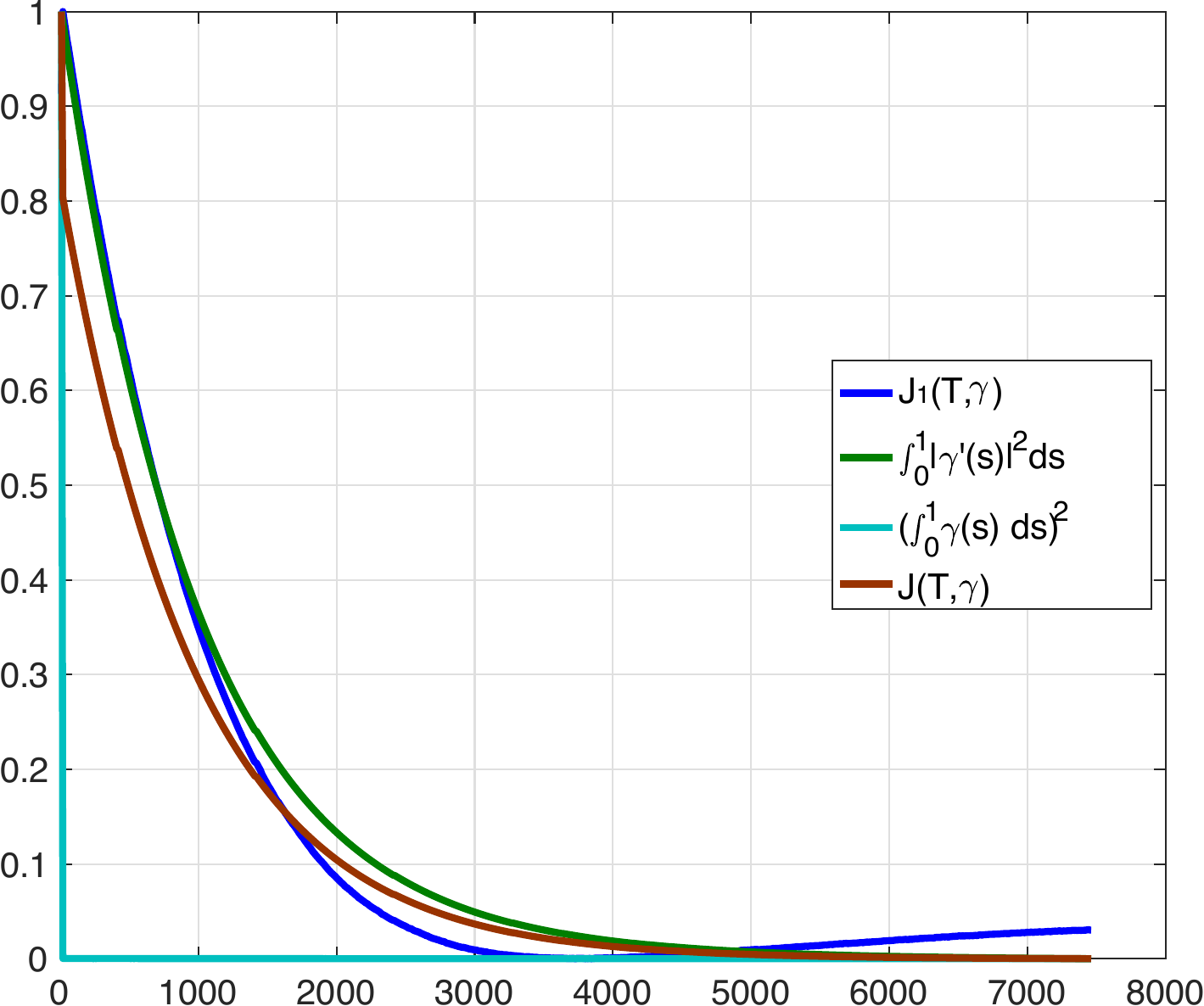}
    &\includegraphics[width=0.45\linewidth]{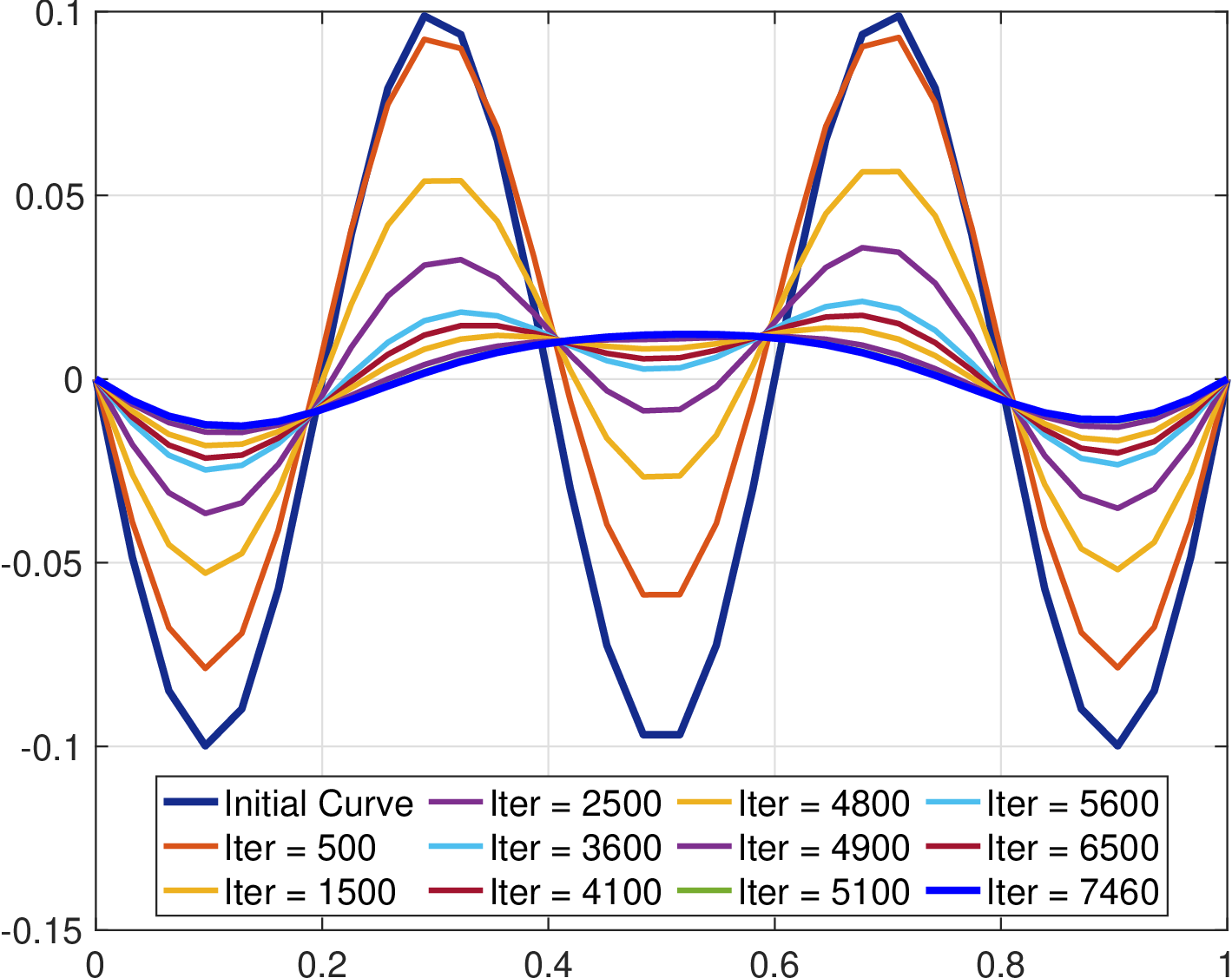}\\
    (a). Cost functional & (b). $\gamma_n$
    \end{tabular}
    \caption{Case~\ref{Sect:Num-3}: Convergence test of the cost functional and optimization curves. 
    }
    \label{fig:Num-3-3}
\end{figure}

Next, we plot the convergence behavior for the cost functional and the optimization iterations in Fig.~\ref{fig:Num-3-3}. In this test, all the terms in the cost functional are decaying until Iter = 5000. After this iteration, the other terms are keeping decaying but the $J_1$ term will be increasing a bit. In fact, the optimization process will contribute to both lifting and lower the initial curves. Thus curvature contribution needs to be optimized in order to derive the final curve. The convergence of curve optimization for several iterations has been plotted in Fig.~\ref{fig:Num-3-3}b. The numerical solutions on the final optimized domain $\Omega_{7460}$ have patterns similar to those in the above sections, and we will omit these plots.

\subsection{Case 4: initial state with $\gamma: y = -0.01\sin(7\pi x)$}\label{Sect:Num-4}
Lastly, we take the initial bottom curve is chosen with less amplitude. Several $\sin$ functions have been tried to select the least value in the cost functional. We shall take  $\gamma_0: y = -0.01\sin(7\pi x)$ for this testing. The numerical solutions on the initial domain have been plotted in Fig.~\ref{fig:Num-4-1}-Fig.~\ref{fig:Num-4-2}.

\begin{figure}[H]
    \centering
    \begin{tabular}{ccc}
    \includegraphics[width=0.3\linewidth]{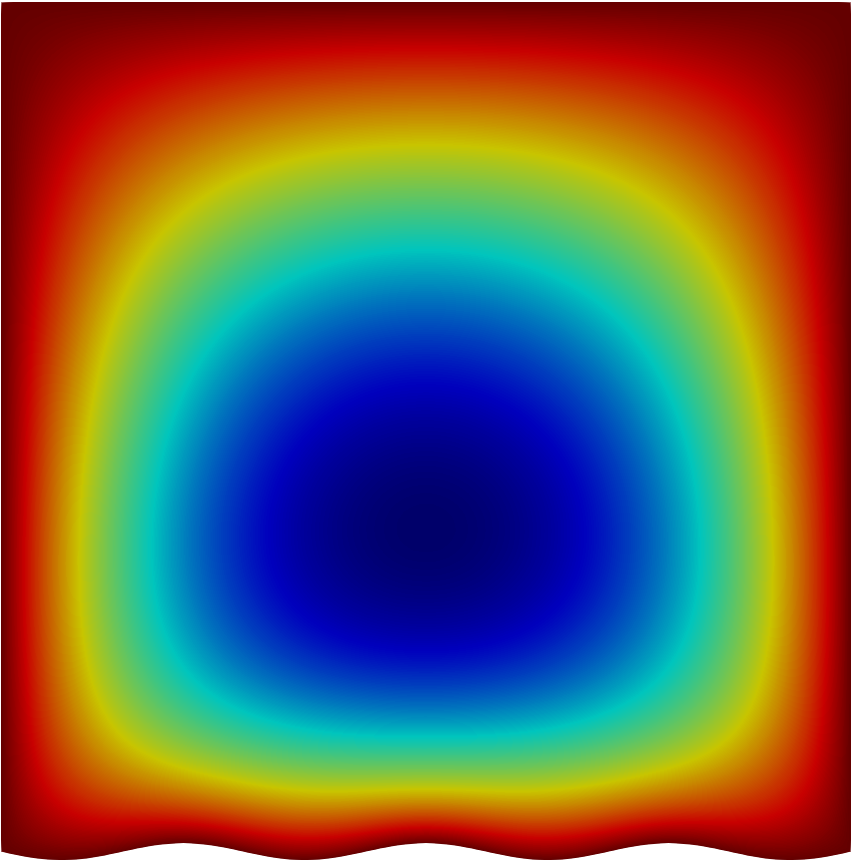}
    &\includegraphics[width=0.3\linewidth]{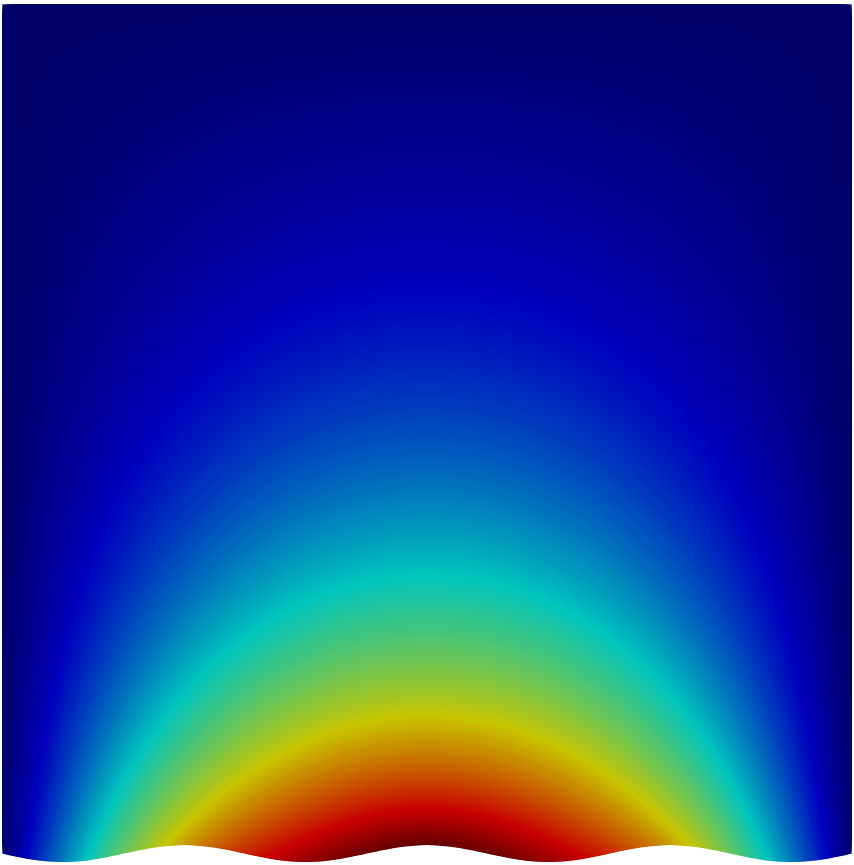}
    &\includegraphics[width=0.3\linewidth]{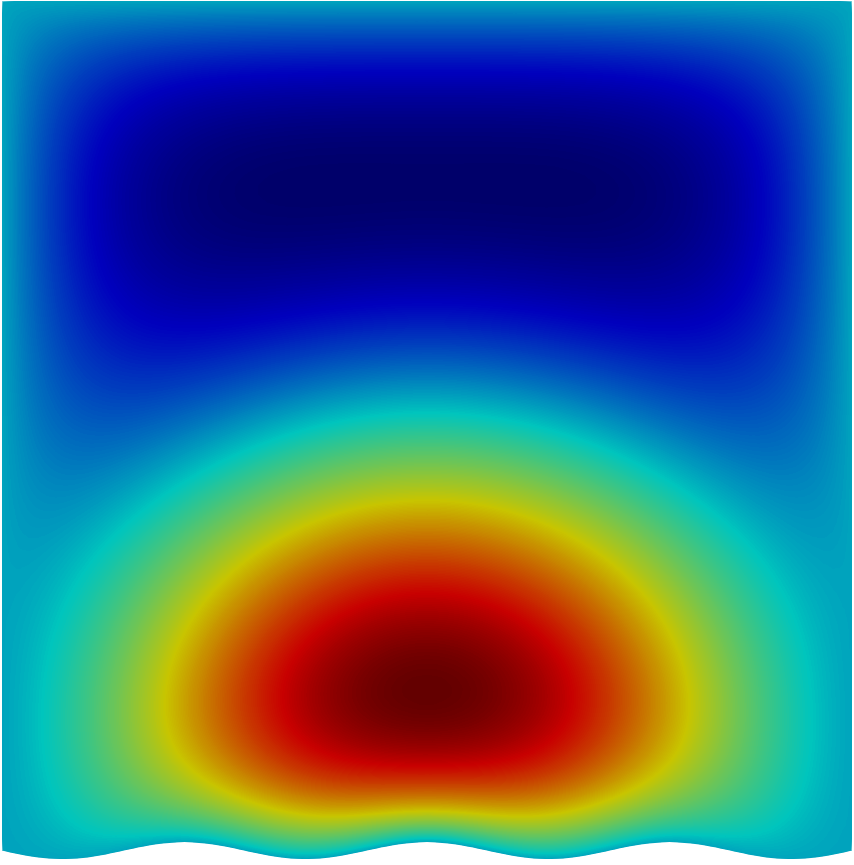}
    \\
    (a). $\hat{T}$ & (b). $T = \hat{T} + T_d$ &(c). $S$
    \end{tabular}
    \caption{Case\ref{Sect:Num-4}: Plot for the numerical solution on the initial domain $\Omega_0.$}
    \label{fig:Num-4-1}
\end{figure}

\begin{figure}[H]
    \centering
    \begin{tabular}{cccc}
         \includegraphics[width=0.21\linewidth]{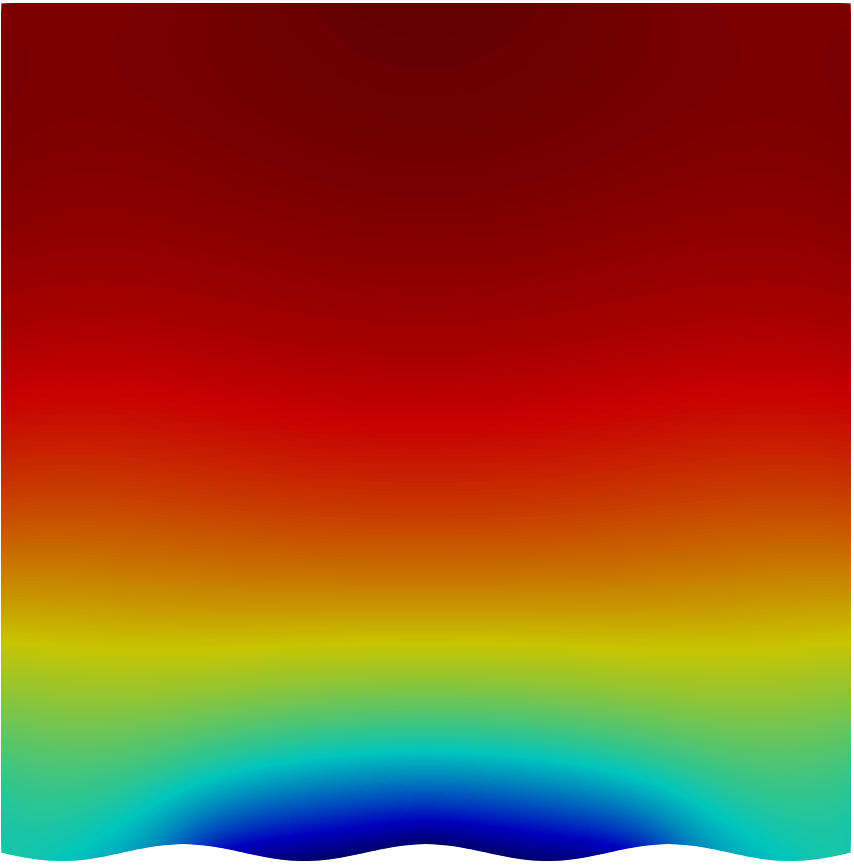}
    &\includegraphics[width=0.21\linewidth]{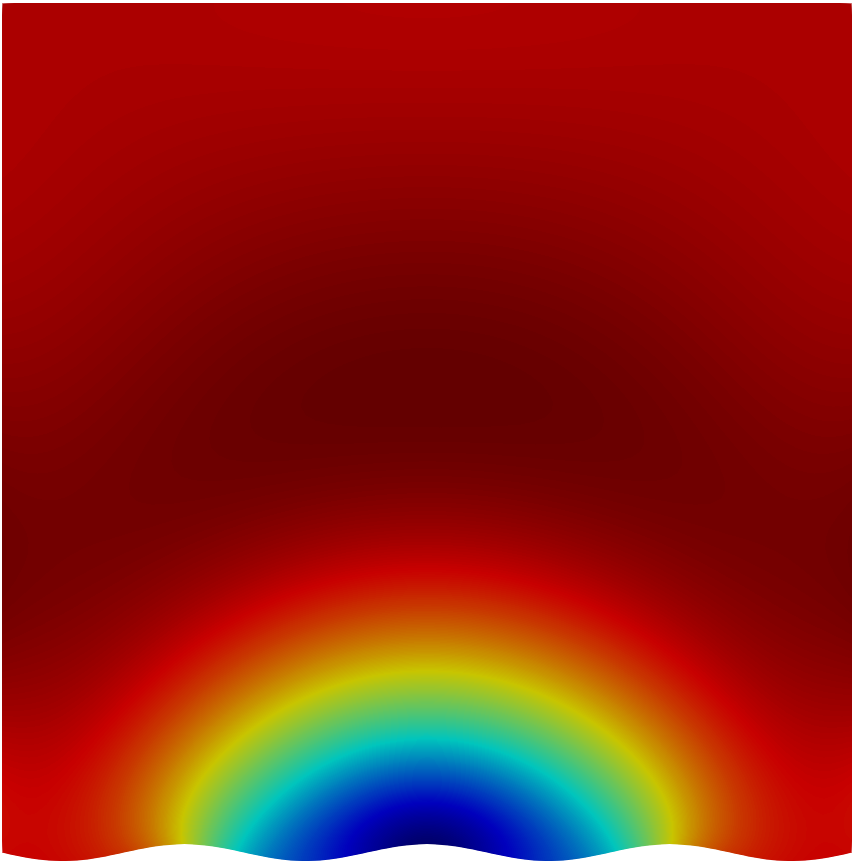}
        &\includegraphics[width=0.21\linewidth]{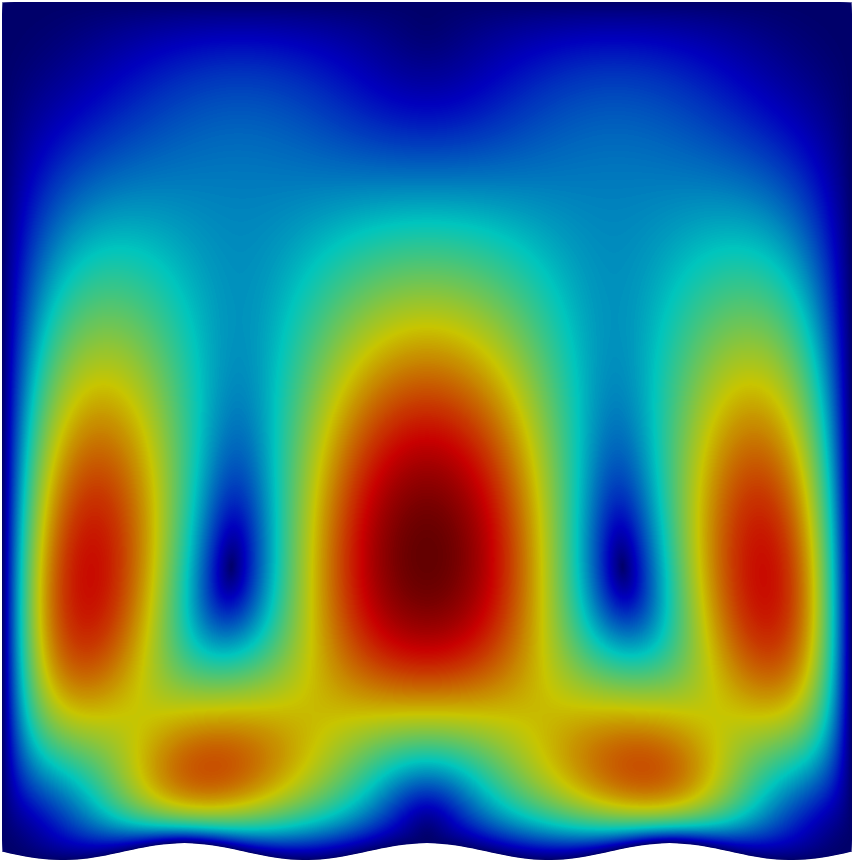}
   & \includegraphics[width=0.21\linewidth]{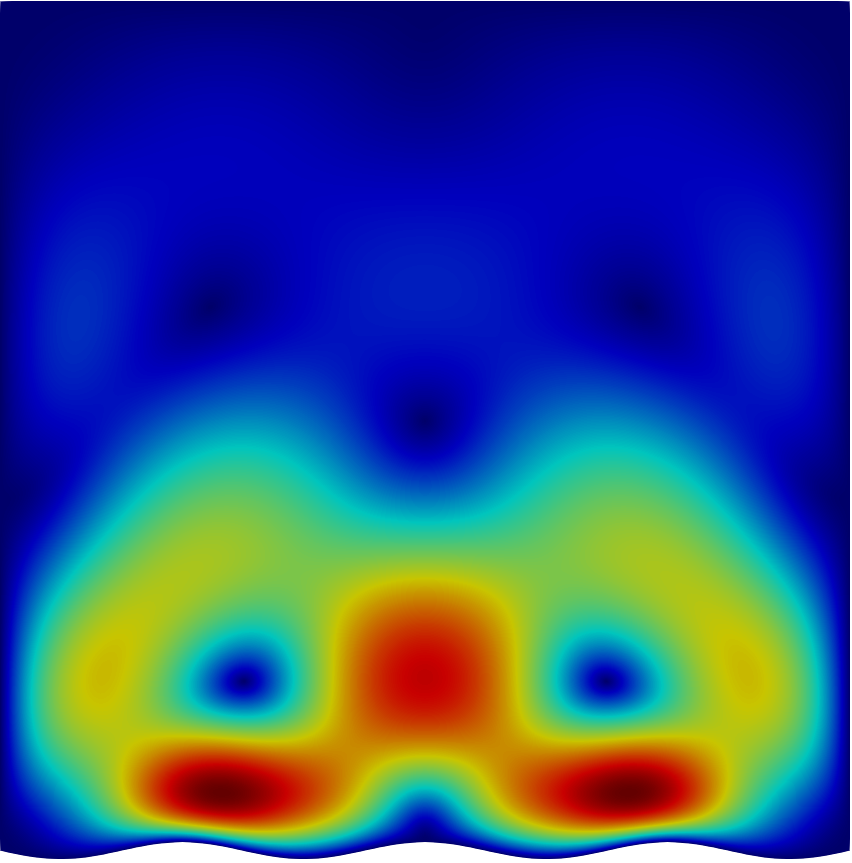}
      \\
  (a). $p$ & (b). $q$ &(c). $|{\bf v}|$ &(d). $|{\bf w}|$
    \end{tabular}
    \caption{Case\ref{Sect:Num-4}: Plot for the numerical solution on the initial domain $\Omega_0.$}
    \label{fig:Num-4-2}
\end{figure}

\begin{figure}[H]
    \centering
    \begin{tabular}{cc}
    \includegraphics[width=0.45\linewidth]{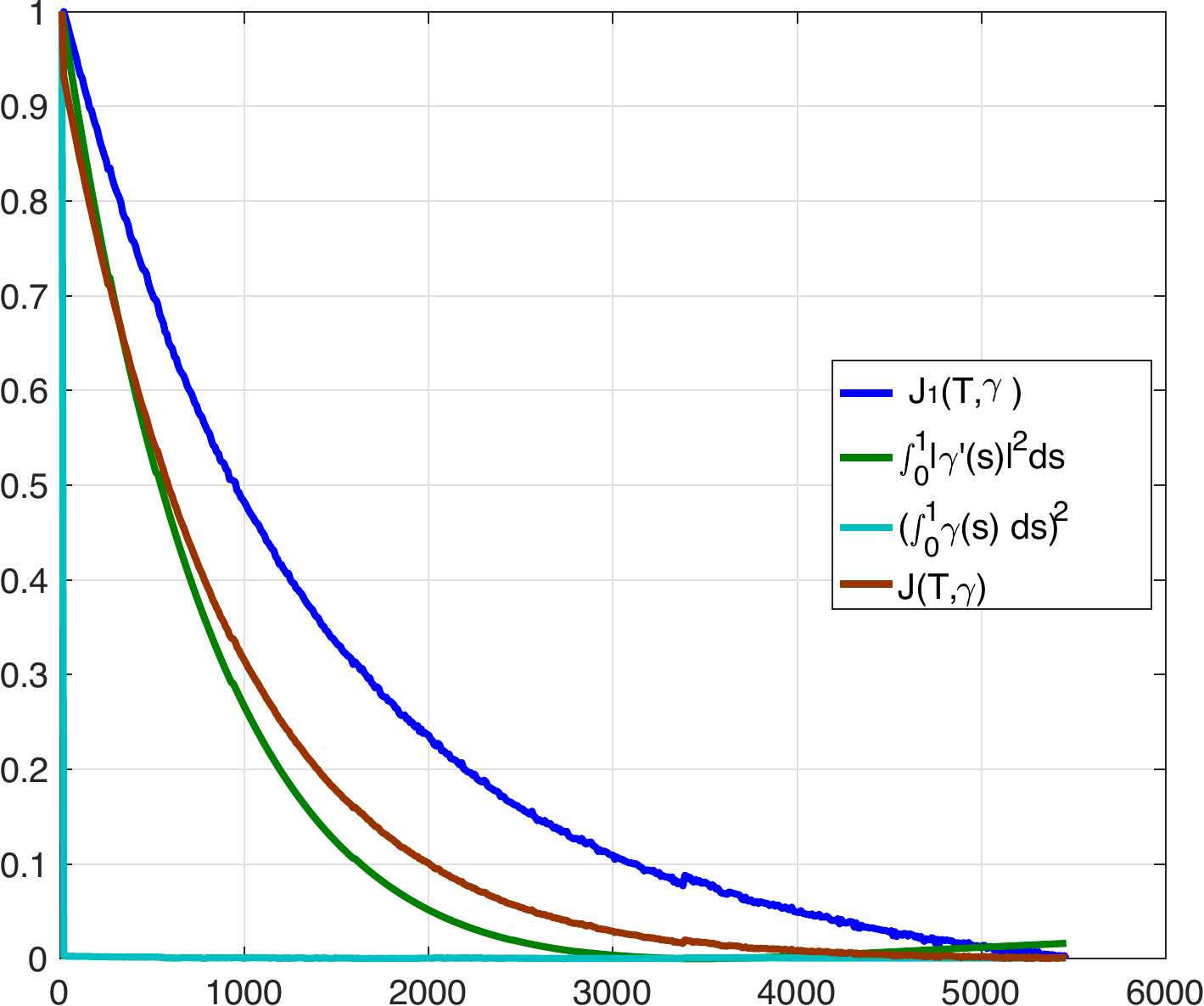}
    &\includegraphics[width=0.45\linewidth]{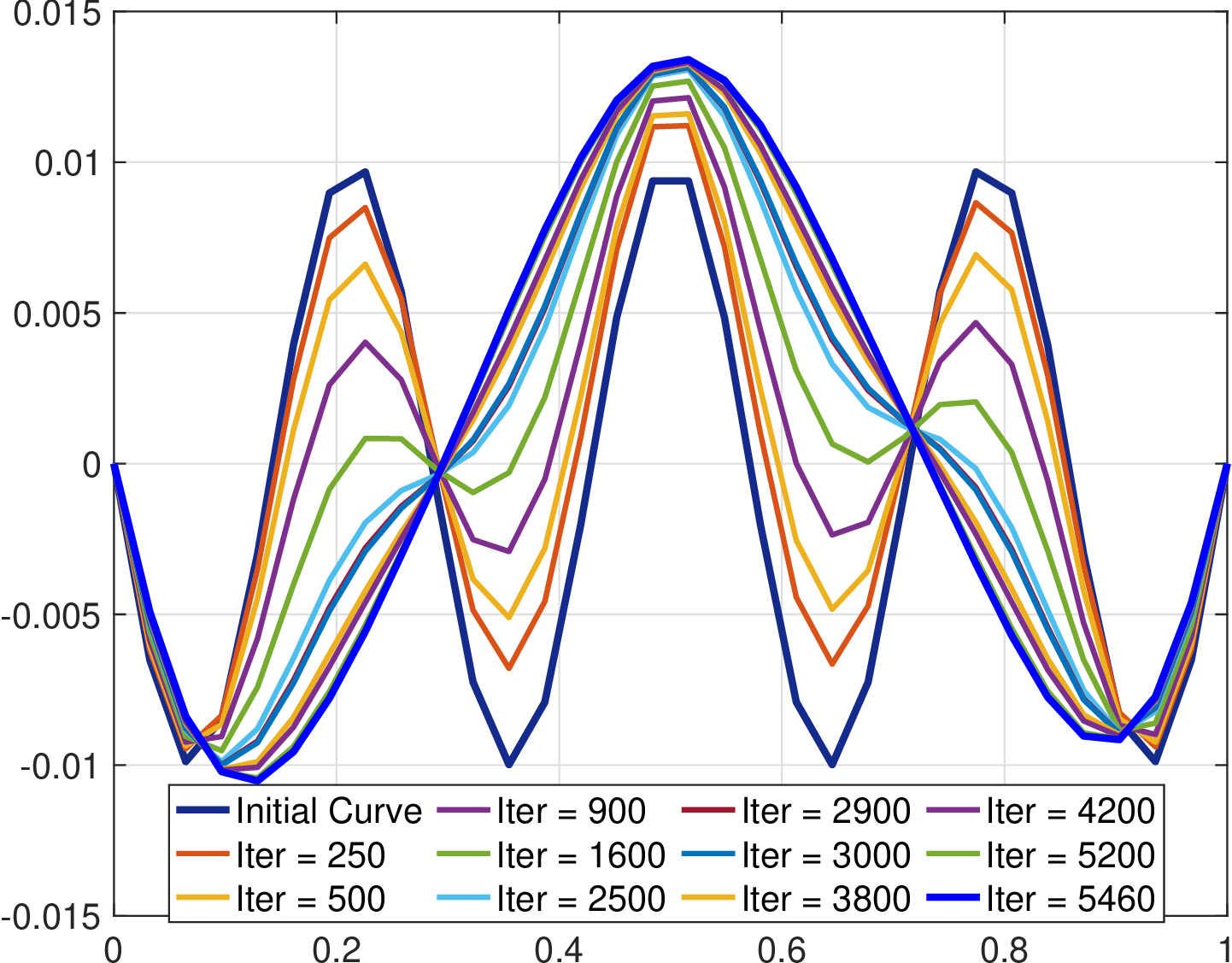}\\
    (a). Cost functional & (b). $\gamma_n$
    \end{tabular}
    \caption{Case~\ref{Sect:Num-4}: Convergence test of the cost functional and optimization curves.}
    \label{fig:Num-4-3}
\end{figure}

Again, we plot the convergence behavior for the cost functional and the optimization iterations in Fig.~\ref{fig:Num-4-3}. In this test, all the terms in the cost functional are decaying. One can also observe the oscillating behavior for the cost functional. The cost functional decays significantly in the first 2000 iterations, but slows down after Iter = 4000.
We believe that such slowing down is due to our uniform choice of the step size $\tau = 1E-3$. We may consider adopting more advanced optimization technologies including adaptive step sizing or a linear search approach to refine our algorithm, which will be left for our future research. The convergence of the curve has been plotted in Fig.~\ref{fig:Num-4-3}b. Interestingly, we observe that the optimization process will lower two bumps in the $\sin$ function but adjust the middle bump with the appropriate amplitude, which is shown in blue color. Here we omit the plot for the numerical solutions in the final domain $\Omega_{5460}$.

\subsection{Case 5: initial state with $\gamma:y = -0.1\sin(5\pi x)\exp(-3 x)$}\label{Sect:Num-5}
Finally, we test our optimization procedure by choosing a non-symmetric curvy bottom given by $y = -0.1\sin(5\pi x)\exp(-3 x)$. The results, presented from Fig. \ref{fig:Num-5-1} to \ref{fig:Num-5-5}, are similar to those obtained for the symmetric cases. We do not discuss the details to avoid repetition.

\begin{figure}[H]
    \centering
    \begin{tabular}{ccc}
    \includegraphics[width=0.3\linewidth]{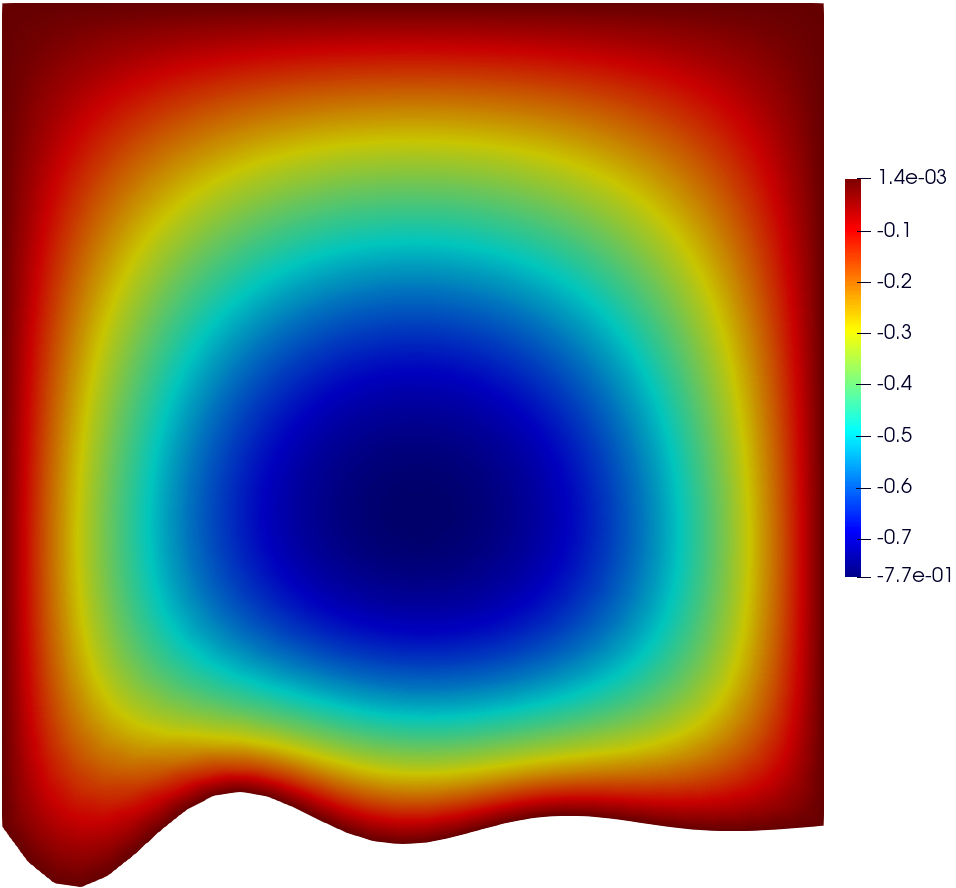}
    &\includegraphics[width=0.3\linewidth]{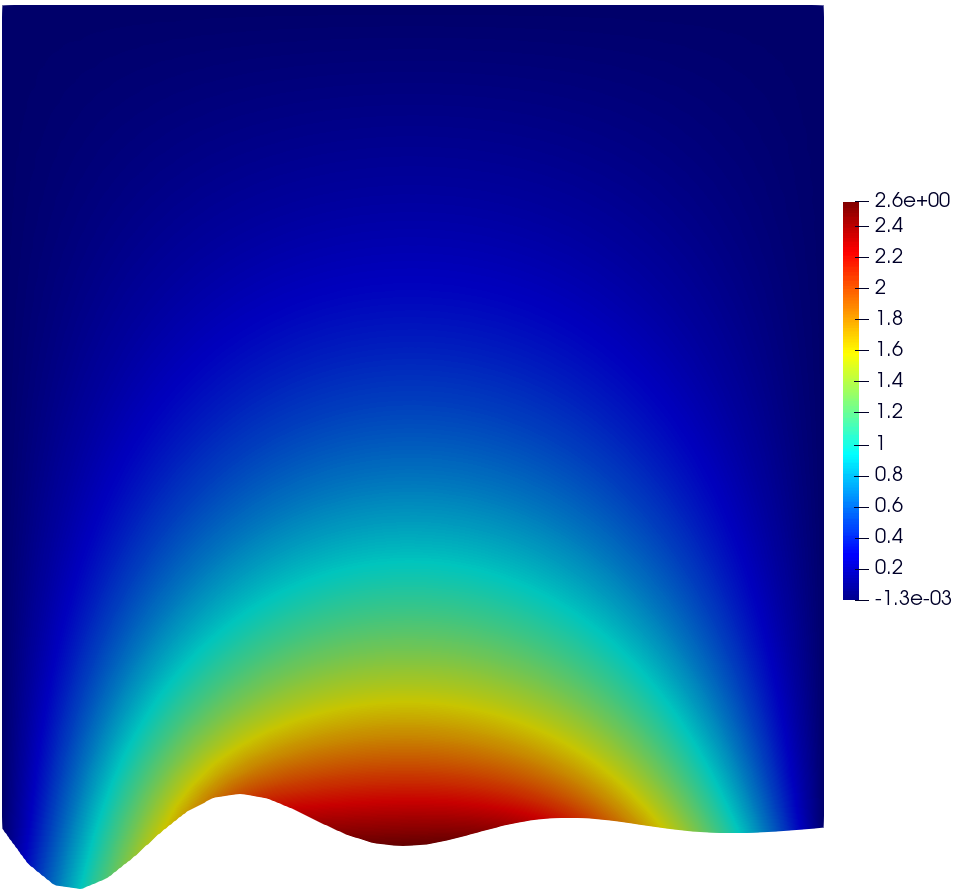}
    &\includegraphics[width=0.3\linewidth]{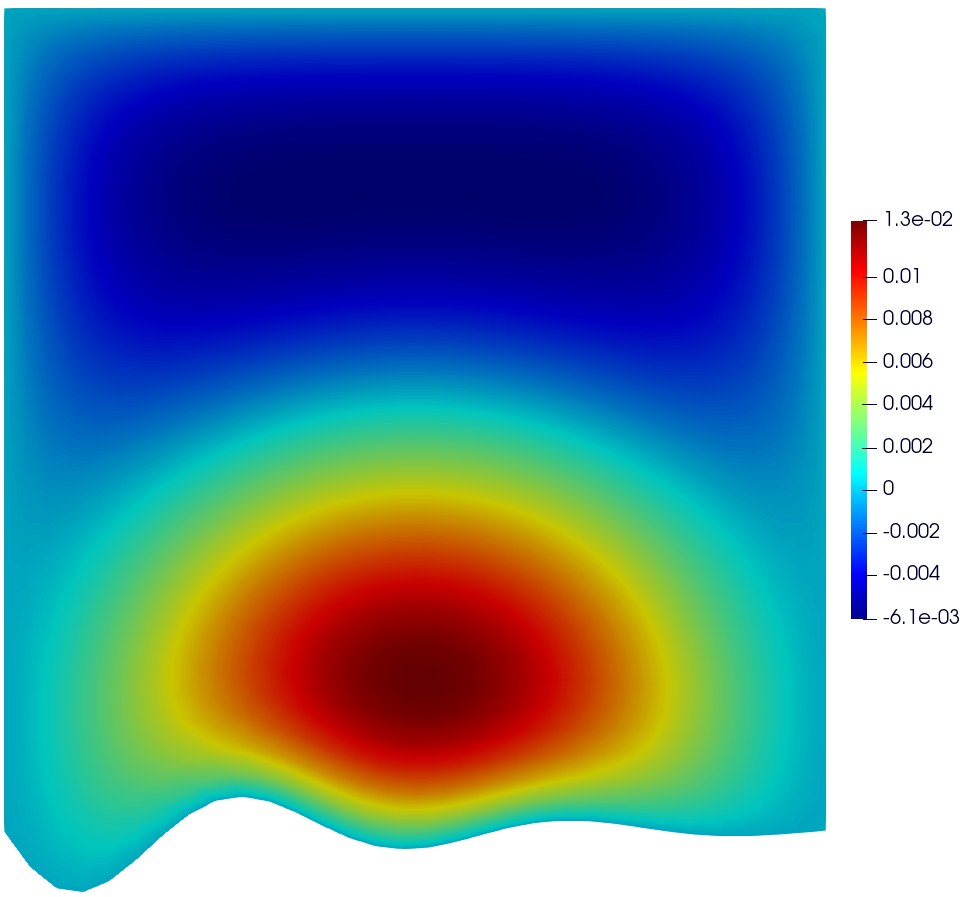}
    \\
    (a). $\hat{T}$ & (b). $T = \hat{T} + T_d$ &(c). $S$
    \end{tabular}
    \caption{Case\ref{Sect:Num-5}: Plot for the numerical solution on the initial domain $\Omega_0.$}
    \label{fig:Num-5-1}
\end{figure}

\begin{figure}[H]
    \centering
    \begin{tabular}{cccc}         \includegraphics[width=0.21\linewidth]{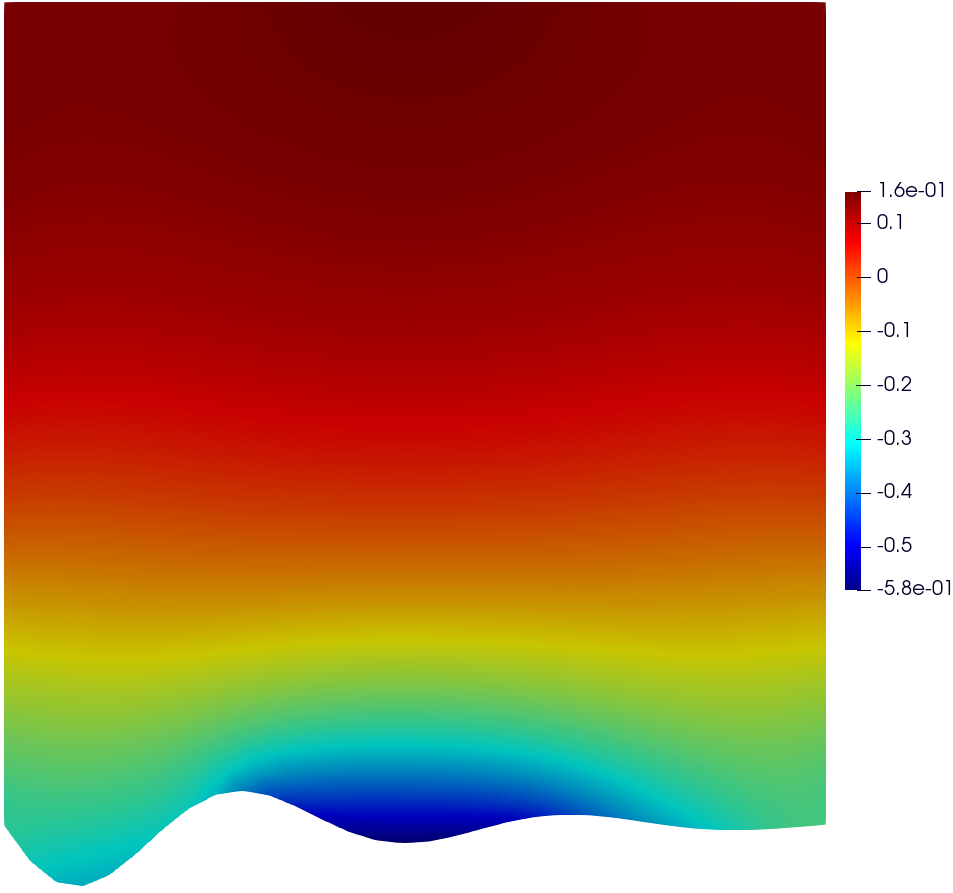}
    &\includegraphics[width=0.21\linewidth]{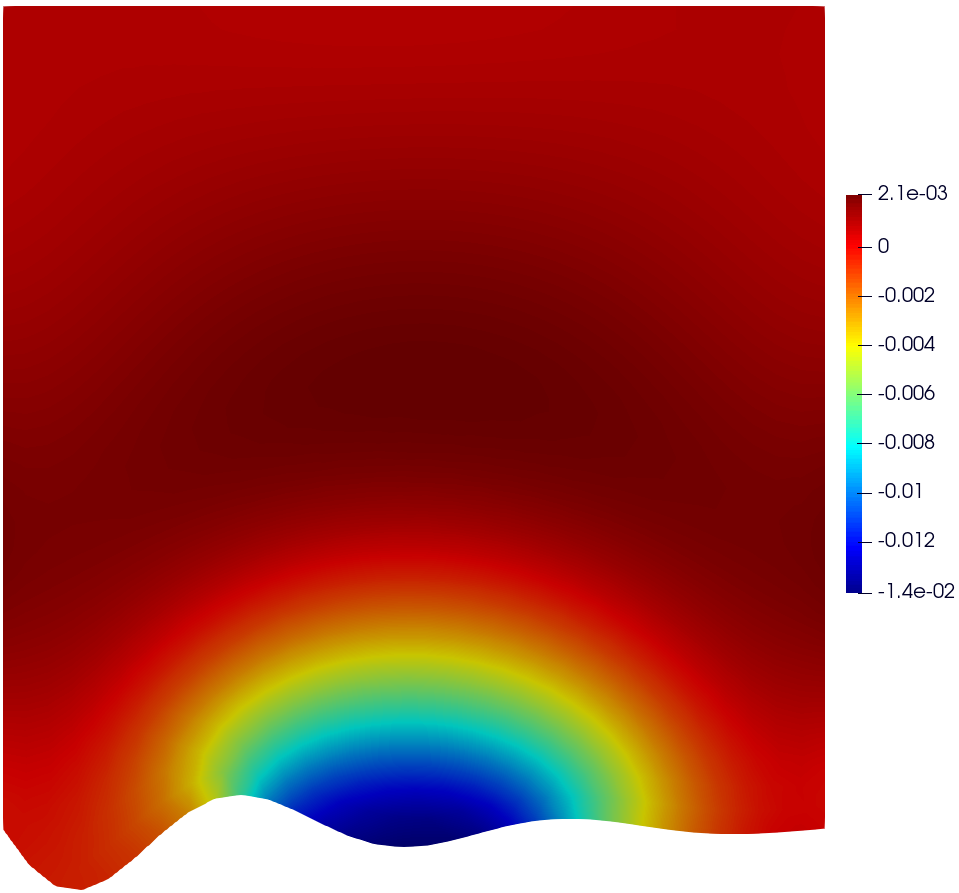}
 &\includegraphics[width=0.21\linewidth]{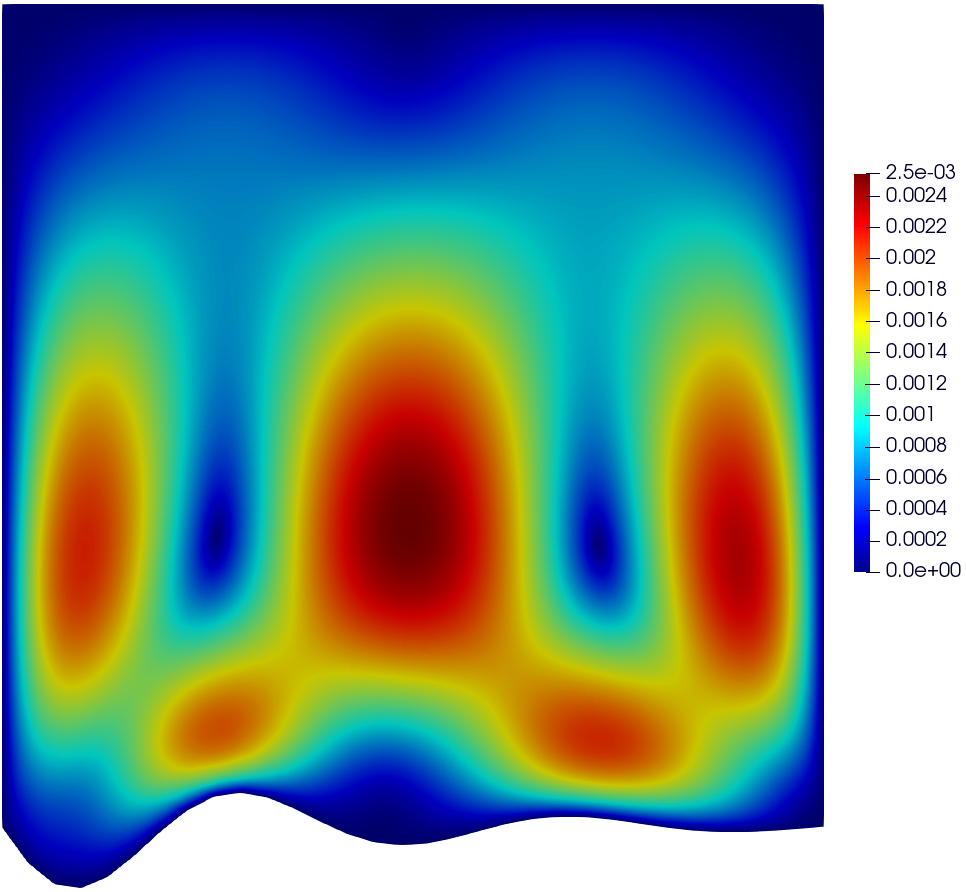}
   & \includegraphics[width=0.21\linewidth]{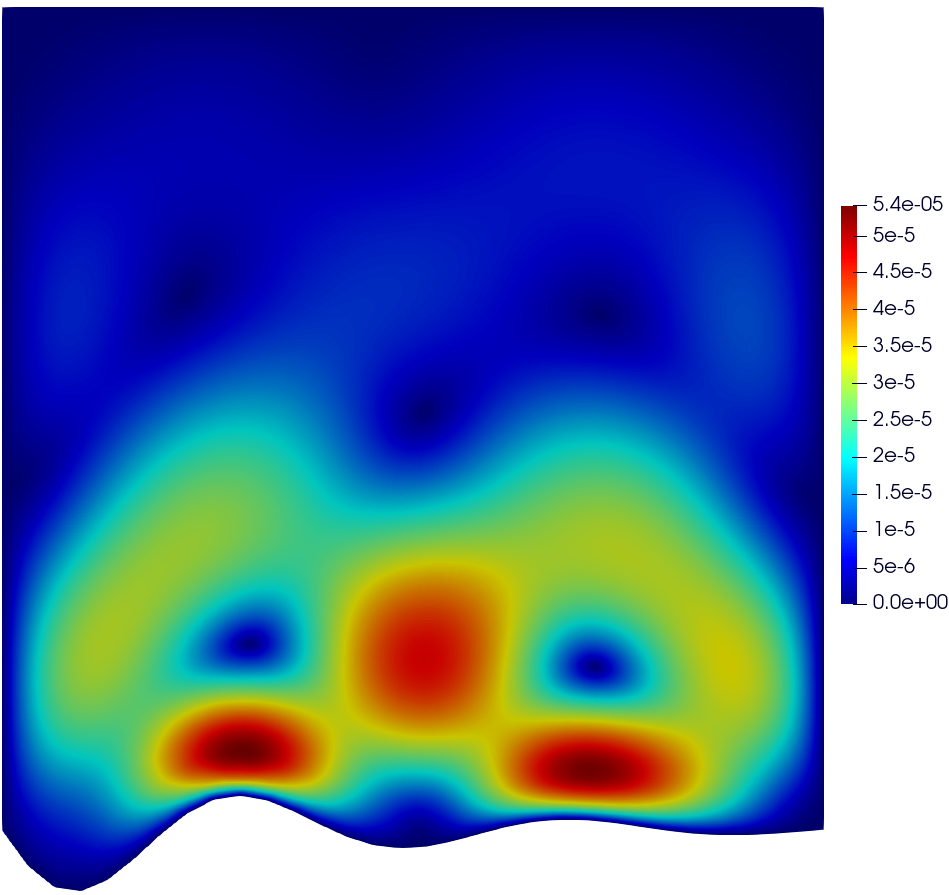}
      \\
  (a). $p$ & (b). $q$ &(c). $|{\bf v}|$ &(d). $|{\bf w}|$
    \end{tabular}
    \caption{Case\ref{Sect:Num-5}: Plot for the numerical solution on the initial domain $\Omega_0.$}
    \label{fig:Num-5-2}
\end{figure}

\begin{figure}[H]
    \centering
    \begin{tabular}{cc}
    \includegraphics[width=0.45\linewidth]{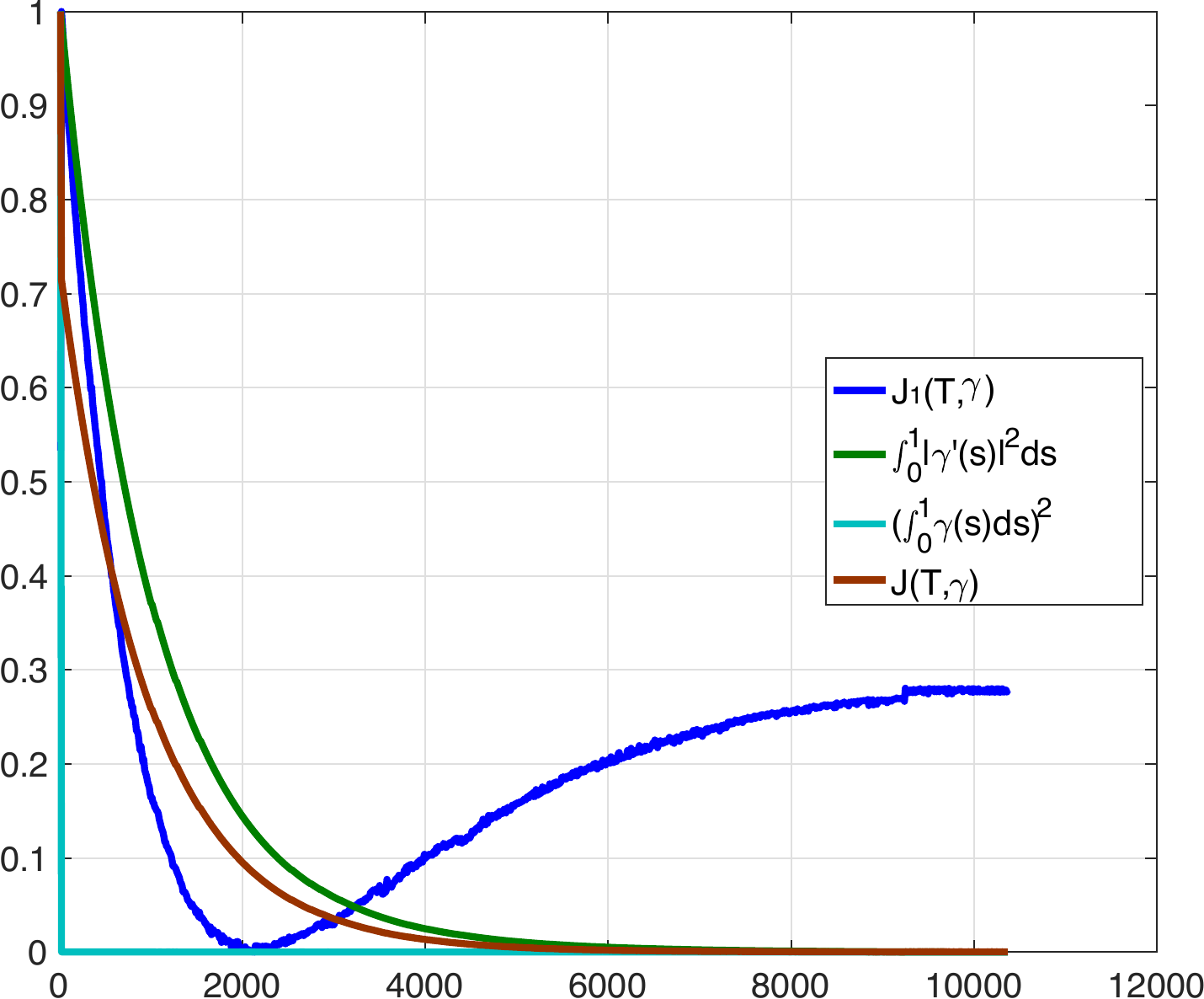}
    &\includegraphics[width=0.45\linewidth]{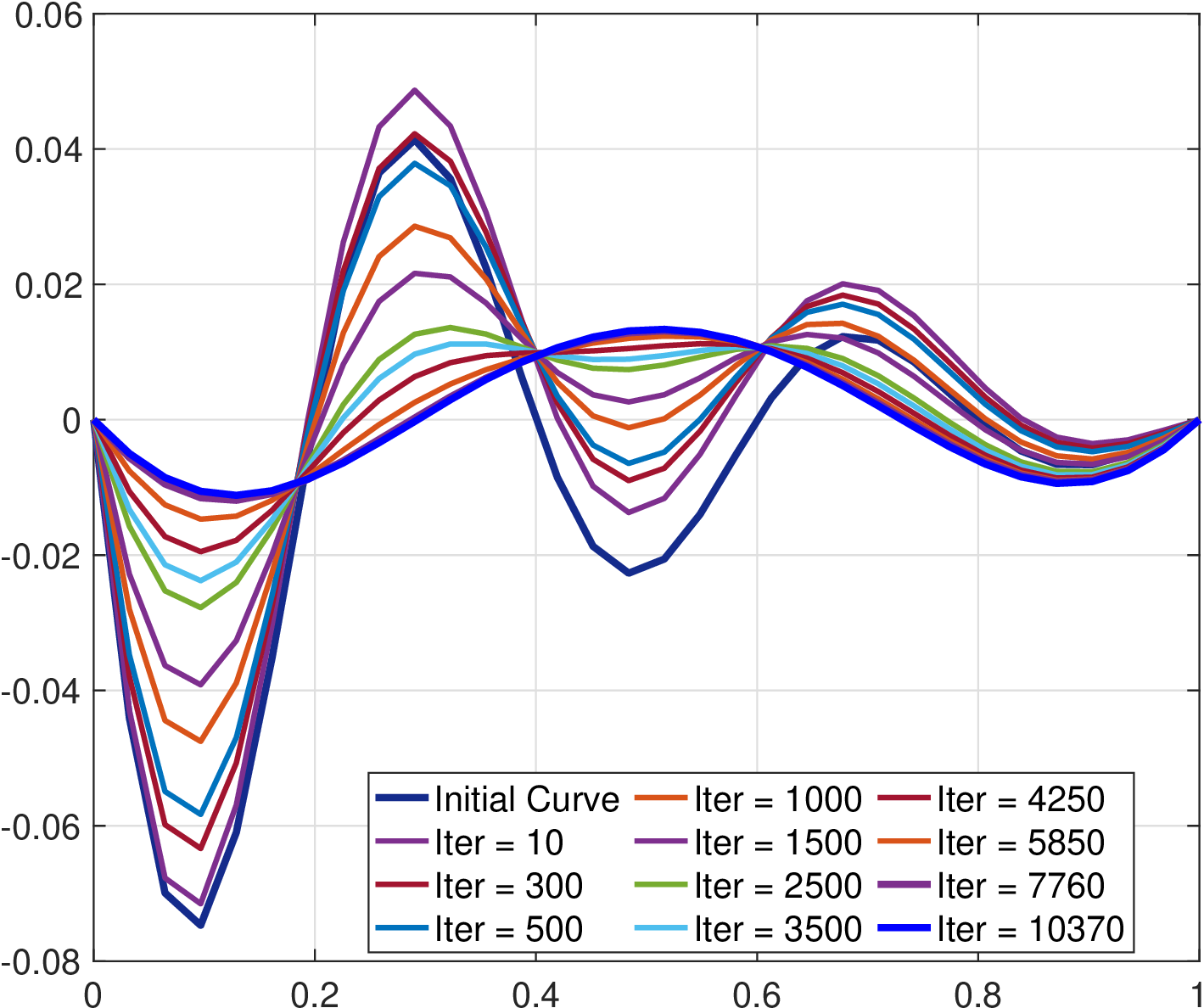}\\
    (a). Cost functional & (b). $\gamma_n$
    \end{tabular}
    \caption{Case~\ref{Sect:Num-5}: Convergence test of the cost functional and optimization curves.}
    \label{fig:Num-5-3}
\end{figure}

\begin{figure}[H]
    \centering
    \begin{tabular}{ccc}
    \includegraphics[width=0.3\linewidth]{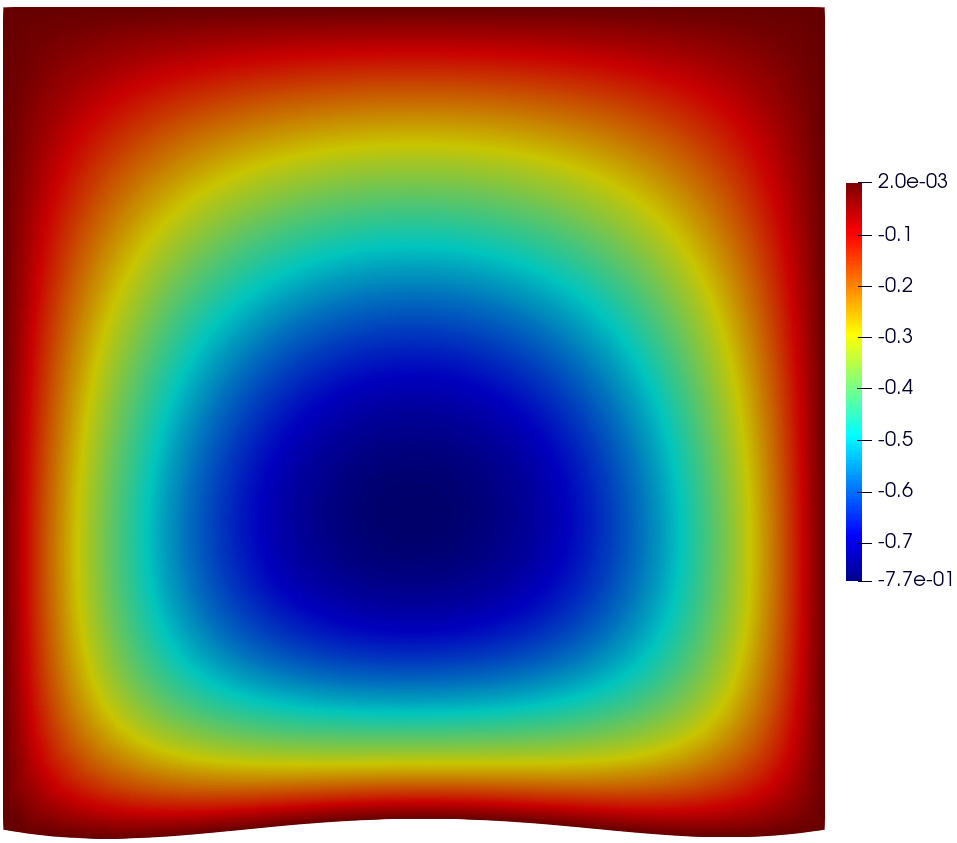}
    &\includegraphics[width=0.3\linewidth]{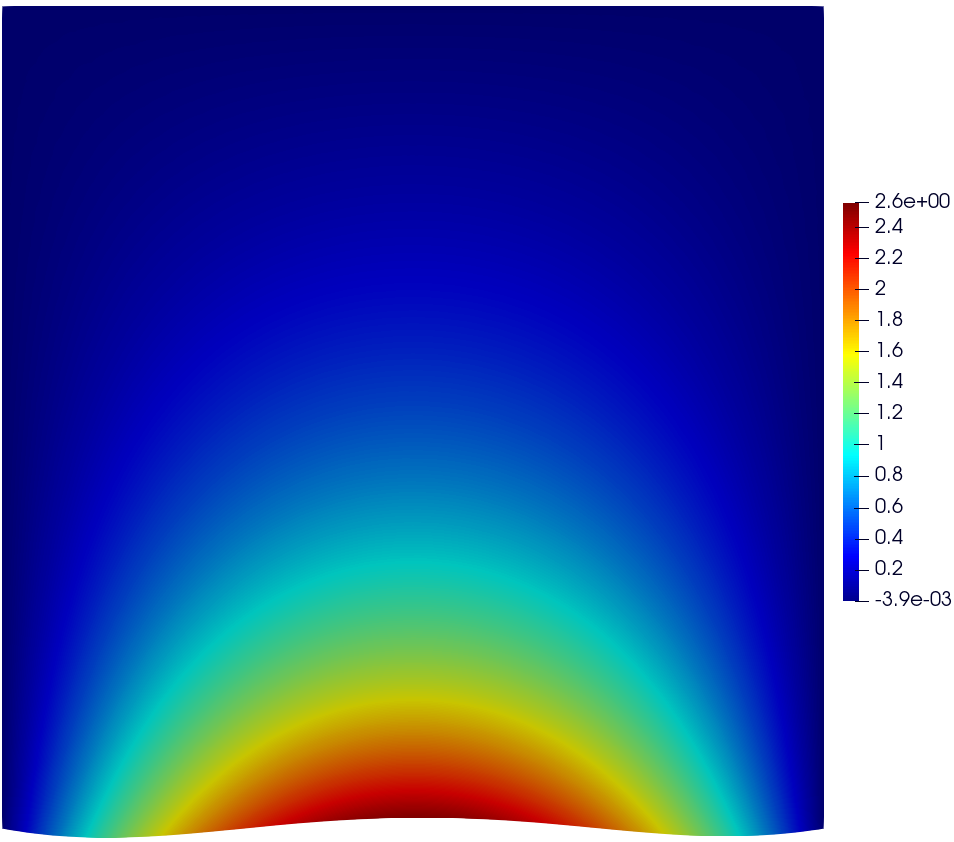}
    &\includegraphics[width=0.3\linewidth]{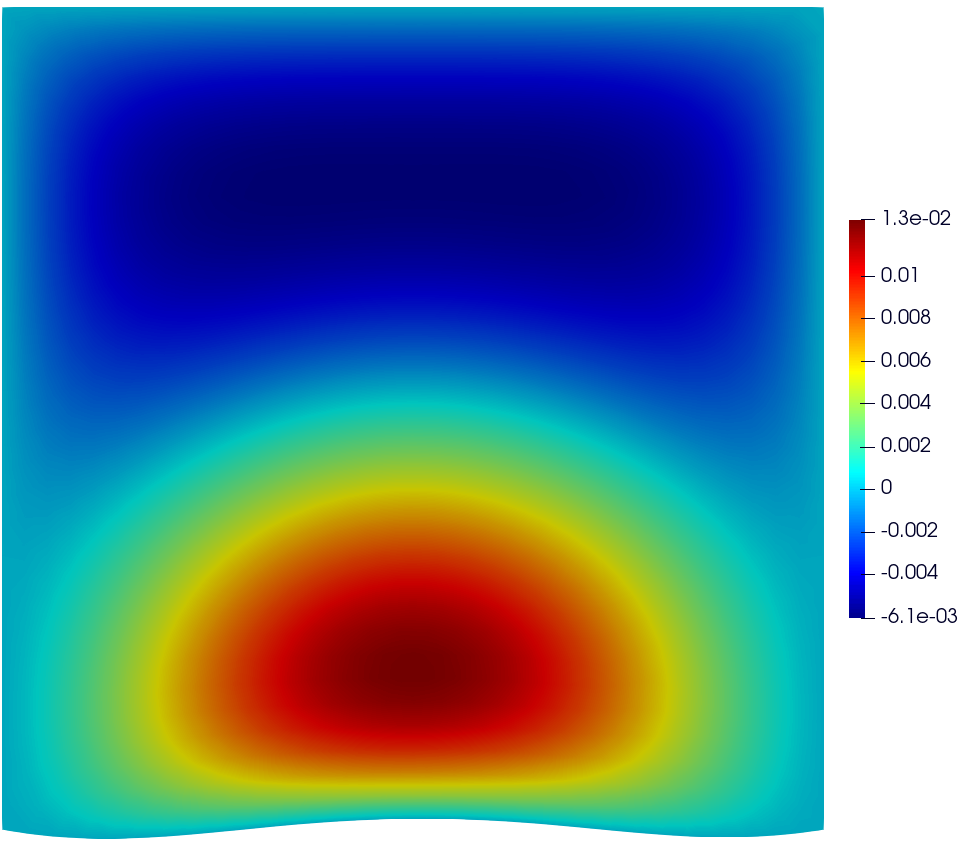}
    \\
    (a). $\hat{T}$ & (b). $T = \hat{T} + T_d$ &(c). $S$
    \end{tabular}
    \caption{Case\ref{Sect:Num-5}: Plot for the numerical solution on the final domain $\Omega_{10370}.$}
    \label{fig:Num-5-4}
\end{figure}

\begin{figure}[H]
    \centering
    \begin{tabular}{cccc}         \includegraphics[width=0.21\linewidth]{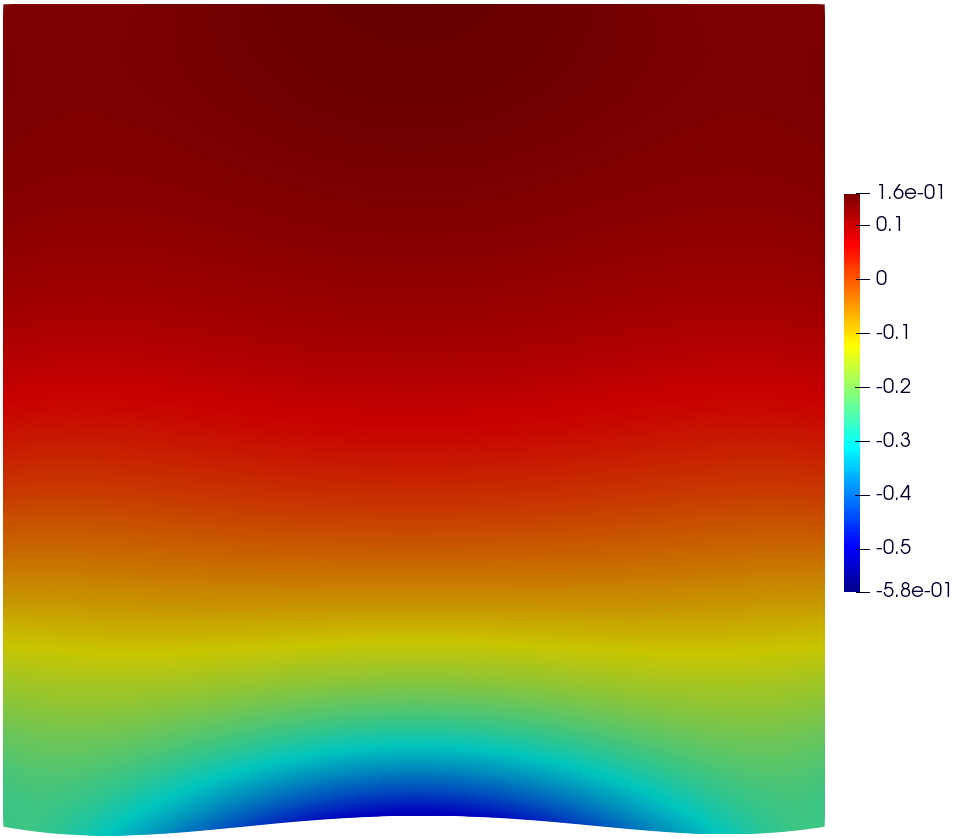}
    &\includegraphics[width=0.21\linewidth]{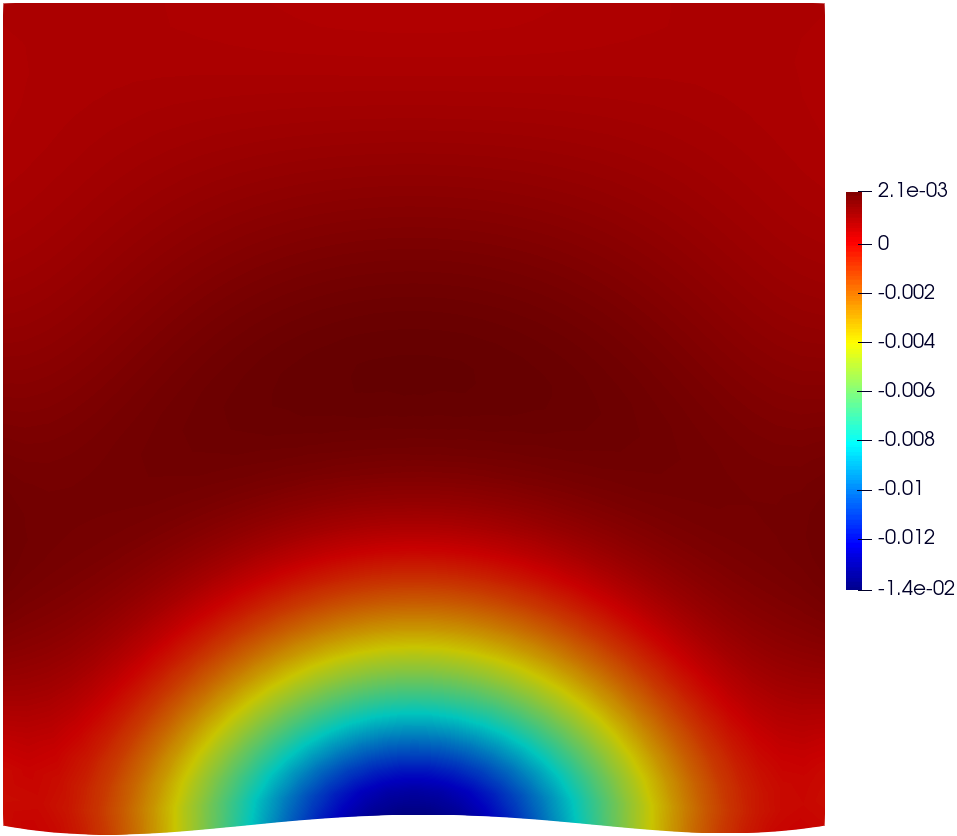}
 &\includegraphics[width=0.21\linewidth]{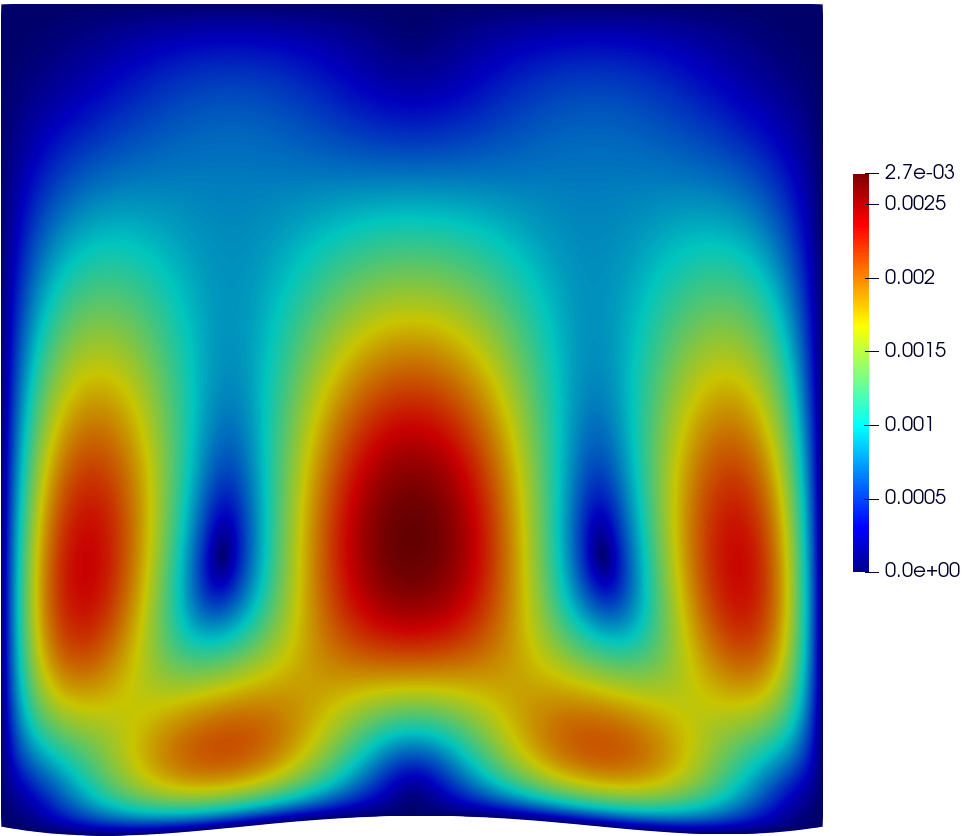}
   & \includegraphics[width=0.21\linewidth]{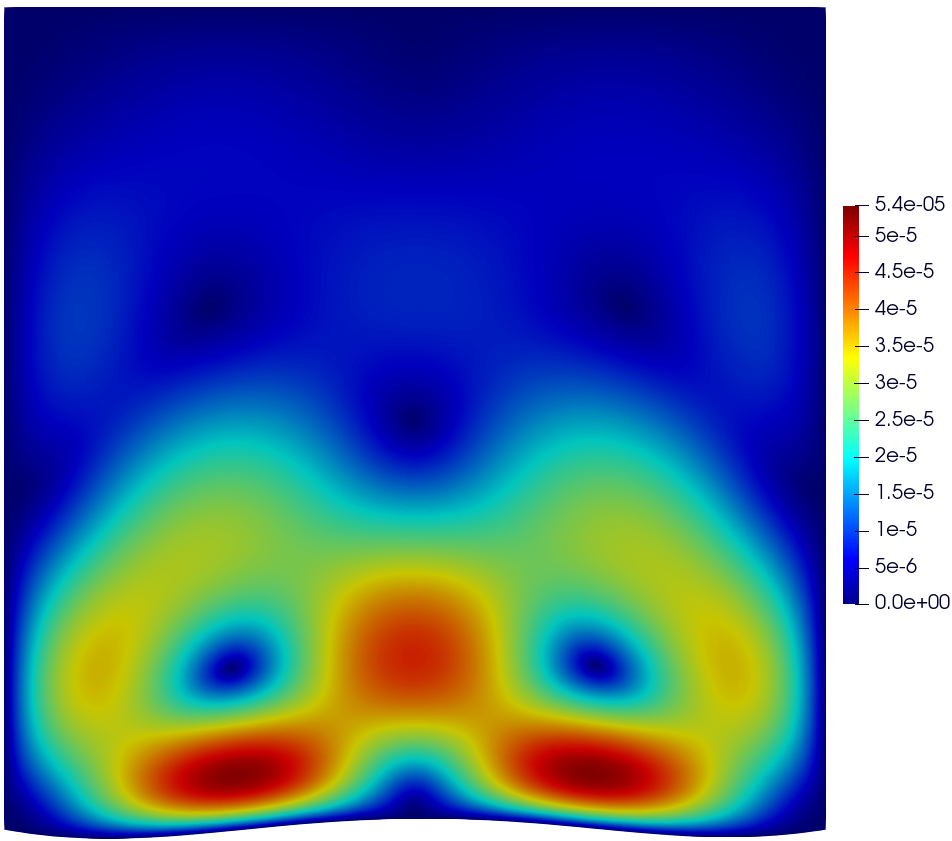}
      \\
  (a). $p$ & (b). $q$ &(c). $|{\bf v}|$ &(d). $|{\bf w}|$
    \end{tabular}
    \caption{Case\ref{Sect:Num-5}: Plot for the numerical solution on the final domain $\Omega_{10370}.$}
    \label{fig:Num-5-5}
\end{figure}
It is worth mentioning that a similar pattern emerges for the final curves from Section~\ref{Sect:Num-1}-Section~\ref{Sect:Num-4}. The plot of these curves is shown in Fig.~\ref{fig:finalcurve-allcases}. Observe that 
the curves corresponding to Case 1 and Case 4 are similar, with almost aligning with each other. The final curve to Case 3 has a fatter middle bump.

\begin{figure}[H]
    \centering
    \includegraphics[width=0.5\linewidth]{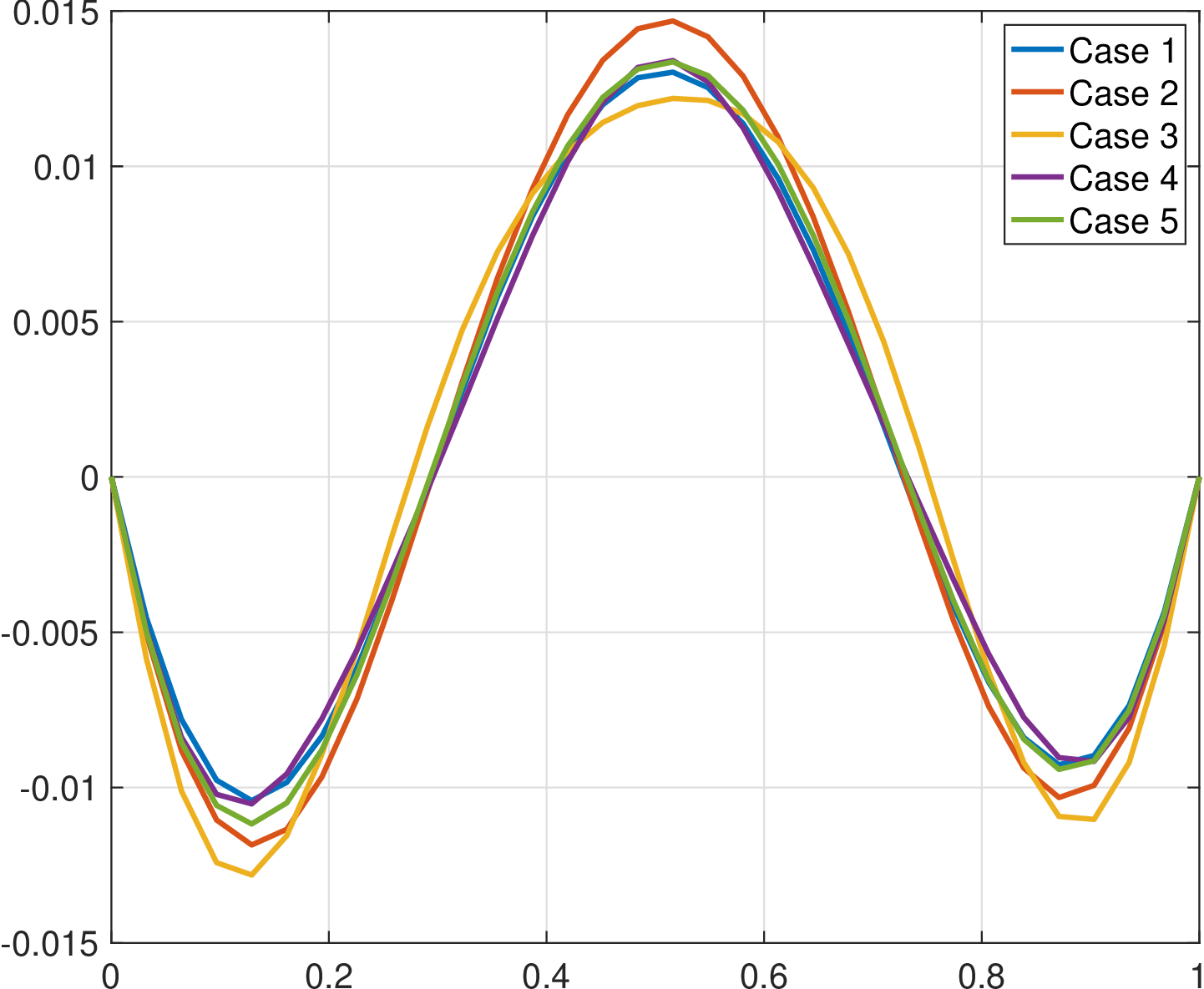}
    \caption{Comparison of the final curves.}
    \label{fig:finalcurve-allcases}
\end{figure}

The values of the cost functional corresponding to these final curves are reported in Table~\ref{tab:val_cost}. In fact, the cost functional has similar values for Case 1 and Case 4, which can explain the similarities in the final curves in these two cases. However, in Case 3, the cost functional $J(T,\gamma)$ is the greatest and it may explain the shape difference among other cases. The results presented so far suggest that all final curves converge to the same pattern. We plan to investigate this in our future research plan by using more advanced optimization techniques.
\begin{table}[H]
    \centering
    \begin{tabular}{c|c|c|c||c}
& $J_1(T,\gamma)$ & $\int_0^1|\gamma'(\xi)|d\xi$ & $(\int_0^1\gamma(\xi) d\xi)^2$ &$J(T,\gamma)$  \\ \hline
         Section~\ref{Sect:Num-1}& 1.43868e-1 &0 & 0 & 967.88582\\
         Section~\ref{Sect:Num-2}&1.34080e-1 &4.40314e-1 & 2.81108e-5 &971.32364\\
         Section~\ref{Sect:Num-3}&1.67868e-1 &1.20756 &1.00939e-5 &968.27665\\
         Section~\ref{Sect:Num-4}&1.48466e-1 &8.39608e-2 &6.19020e-4 & 972.55404\\ 
         Section~\ref{Sect:Num-5}&1.41794e-1 &1.99301e-1 &2.64846e-6 & 967.95342\\ \hline
    \end{tabular}
    \caption{Values of initial cost functional.}
    \label{tab:val_cost_ini}
\end{table}

\begin{table}[H]
    \centering
    \begin{tabular}{c|c|c|c||c}
         & $J_1(T,\gamma)$ & $\int_0^1|\gamma'(\xi)|d\xi$ & $(\int_0^1\gamma(\xi) d\xi)^2$ &$J(T,\gamma)$  \\ \hline
         Section~\ref{Sect:Num-1}& 1.41462e-1 &5.46374e-3 & 1.85319e-9 & 967.88477\\
         Section~\ref{Sect:Num-2}&1.41184e-1 &6.91638e-3 &1.83922e-9 &967.88486\\
         Section~\ref{Sect:Num-3}&1.41181e-1 &7.36001e-3 &1.78769e-9 &967.88497\\
         Section~\ref{Sect:Num-4}&1.41469e-1 &5.50588e-3 &1.85346e-9 & 967.88479\\ 
         Section~\ref{Sect:Num-5}&1.41370e-1 &5.92158e-3 &1.85248e-9 & 967.88480\\ \hline
    \end{tabular}
    \caption{Values of final cost functional. }
    \label{tab:val_cost}
\end{table}

\section{Conclusion}\label{Sec:Conclusion}
In this work, we explored the problem of optimizing the shape of a fluid container to force a uniform temperature distribution. Our analysis began with the investigation of the state Boussinesq system, establishing its well-posedness along with boundary regularity properties for the weak solution. We then introduced a shape optimization framework and proved the existence of an optimal domain. To characterize such optimal shapes, we derived a first-order optimality condition using the adjoint method, supported by a detailed analysis of the adjoint system. A crucial step involves handling domain perturbations in a rigorous way to ensure the existence of directional derivatives of the objective functional. Finally, numerical experiments were conducted to validate the theoretical results. The setting explored in this paper represents a canonical example of fluid systems where a quantity (such as temperature or material concentration) is transported through both diffusion and convection. Our broader goal is to develop a solid theoretical framework for control and design strategies in such systems, along with efficient numerical methods for their practical implementation. The results presented here contribute to this long-term objective. Future directions include the integration of additional active components and the consideration of more complex domain modifications to enhance mixing efficiency. These may involve shape deformations of boundary regions and topological changes, such as introducing internal holes to create channels that promote mixing. We are particularly interested in active design mechanisms based on boundary or distributed controls for both temperature and flow, as well as dynamic boundary elements (such as flexible, controllable membranes) that can adapt to optimize performance.
\bibliographystyle{plain} 
\bibliography{References}

\end{document}